\newtheorem{theo}{Theorem}[section]
\newtheorem{pro}[theo]{Proposition}
\newtheorem{lem}[theo]{Lemma}
\newtheorem{rem}[theo]{Remark}
\newtheorem{ass}[theo]{Assumption}
\newtheorem{proposition}[theo]{Proposition}
\newtheorem{cor}[theo]{Corollary}
\newcommand{\trace}{{\mathrm{trace}}}
\newcommand{\loc}{{\mathrm{loc}}}
\newcommand{\hm}{\widehat{m}}
\renewcommand{\hat}{\widehat}
\newcommand{\cf}{\mathcal{F}}
\newcommand{\EX}{\mathbb{E}}
\newcommand{\bbN}{\mathbb{N}}
\newcommand{\bbR}{\mathbb{R}}
\newcommand{\cB}{\mathcal{B}}
\newcommand{\cN}{\mathcal{N}}
\newcommand{\WP}{\mathcal{W}}
\def \R{\mathbb R}
\def \IP{\mathbb P}
\newcommand{\Z}{\mathbb{Z}}
\newcommand{\cH}{\mathcal{H}}
\newcommand{\cV}{\mathcal{V}}
\newcommand{\E}{\mathbb{E}}
\newcommand{\T}{\mathbb{T}}
\newcommand{\Laplace}{\Delta}
\newcommand{\Pl}{P_{\lambda}}
\newcommand{\Ql}{Q_{\lambda}}
\newcommand{\cA}{\mathcal{A}}
\newcommand{\hu}{\widehat {m}}
\newcommand{\rd}{\mathrm{d}}
\newcommand{\pd}{\partial}
\newcommand{\norm}[1]{\| #1 \|}
\newcommand{\moda}[1]{\textcolor{magenta}{#1}\index {#1}} %ANDREW
\definecolor{darkgreen}{rgb}{0,.75,0}  
\title{Accuracy and Stability of The Continuous-Time 3DVAR
Filter for The Navier-Stokes Equation}
\author{
D. Bl\"omker, 
K.J.H. Law, 
A.M. Stuart,
K. C. Zygalakis}
\begin{document}
\maketitle
\begin{abstract}
The 3DVAR filter is prototypical of
methods used to combine observed data with 
a dynamical system, online, in order to improve estimation of
the state of the system. 
Such methods are used for high dimensional data
assimilation  problems, such as those arising
in weather forecasting. To gain understanding
of filters in applications such as these, it is hence 
of interest to study their behaviour when applied to
infinite dimensional dynamical systems.
This motivates study of the problem of accuracy and 
stability of 3DVAR filters for the Navier-Stokes equation.

We work in the limit of high frequency observations
and derive continuous time filters. 
This leads to a stochastic partial differential equation (SPDE) 
for state estimation, in the form of a damped-driven
Navier-Stokes equation, with mean-reversion to the signal, and
spatially-correlated time-white noise. 
Both forward and pullback accuracy and stability results
are proved for this SPDE, showing in particular that
when enough low Fourier modes are observed,
and when the model uncertainty is larger than the data 
uncertainty in these modes (variance inflation), then 
the filter can lock
on to a small neighbourhood of the true signal, recovering from
order one initial error, if the error in the observations
modes is small. Numerical examples are given to
illustrate the theory. 
\end{abstract}

%%%%%%%%%%%%%%%%%%%%%%%%%%%%%%%%%%%%%%%% 
\section{Introduction}

%%%%%%%%%%%%%%%%%%%%%%%%%%%%%%%%%%%%
Data assimilation is
the problem of estimating the state variables of a dynamical system, given observations of the output variables. It is a challenging and fundamental problem area, of importance in a wide range of applications. A natural framework for approaching such problems is that of Bayesian statistics, since it is often the case that
the underlying model and/or the data are uncertain. However, in many real world applications, the dimensionality of the underlying model and the vast amount of available data makes the investigation of the Bayesian posterior distribution of the model state given data computationally infeasible in on-line situations. An example of such an application is the global weather prediction: the computational models used currently involve on the order of $\mathcal{O}(10^8)$ unknowns, while a large amount of partial observations of the atmosphere, currently on the order of 
$\mathcal{O}(10^6)$ per day,
are used to compensate both the uncertainty in the model and in the initial conditions.   

In situations like this practitioners typically employ some form of approximation based on both physical insight and computational expediency. There are two competing methodologies for data assimilation which are widely implemented in practice, the first being \emph{filters} \cite{kalnay2003atmospheric} and the second being \emph{variational methods} \cite{ben02}. 
In this paper we focus on the filtering approach.
Many of the filtering algorithms
implemented in practice are \emph{ad hoc} and, besides some very special cases, the theoretical understanding of their ability
to accurately and reliably estimate the state variables  is 
under-developed. Our goal here is to contribute towards
such theoretical understanding. 
We concentrate on the 3DVAR filter
which has its origin in weather forecasting \cite{lorenc1986analysis} and is prototypical of more sophisticated filters used 
today. 

The idea behind filtering is to update the posterior 
distribution of the system state sequentially at each 
observation time. This may be performed exactly for linear systems
subject to Gaussian noise: the Kalman filter \cite{harvey1991forecasting}. For the case of non-linear and non-Gaussian scenarios the particle filter \cite{doucet2001sequential} can be used and provably approximates the desired probability distribution as the number of particles increases \cite{bain2008fundamentals}. Nevertheless, standard implementations of this method perform poorly in high dimensional systems \cite{SBBA08}. Thus the development of
practical filtering algorithms for high 
dimensional dynamical systems is an active research area 
and for further insight into this subject the reader may consult 
\cite{toth1997ensemble,evensen2009data,VL09,
harlim2008filtering,majda2010mathematical,chorin2010implicit,van2010nonlinear,beskos2011stability} and references within.
Many of the methods used invoke some form of \emph{ad hoc} Gaussian approximation and the 3DVAR method which we analyze here is perhaps the simplest example of this idea. These \emph{ad hoc} filters, 3DVAR included, may also be viewed within the framework of nonlinear control theory
and thereby derived directly, without reference to the
Bayesian probabilistic interpretation; indeed this is primarily
how the algorithms were conceived. 

In this paper we will study accuracy
and stability for the 3DVAR filter.
The term {\em accuracy} refers to establishing closeness of the
filter to the true signal underlying the data, and
{\em stability} is concerned with studying
the distance between two filters,
initialized differently, but driven by the same noisy data.
Proving filter accuracy and stability results
for control systems has a long history and the paper \cite{Tarn-Rasis} is a fundamental contribution to the subject 
with results closely related to those developed here. However,
as indicated above, the high dimensionality of the problems
arising in data assimilation is a significant challenge in
the area. In order to confront this challenge we work in
an infinite dimensional setting, therby ensuring that our
results are not sensitive to dimensionality. We focus
on dissipative dynamical systems, and take the two
dimensonal Navier-Stokes equation as a prototype model
in this area. Furthermore, we study a data assimilation 
setting in which data arrives continuously in time which
is a natural setting in which to study high time-frequency
data subject to significant uncertainty. 
The study of accuracy and stability of filters for data assimilation has been a developing area over the last few years and the paper \cite{carrassi2008data} contains finite dimensional theory and numerical experiments in a variety of finite and discretized infinite dimensional systems extend the conclusions of
the theory. The paper \cite{trevisan2011chaos}
highlights the principle that, roughly speaking, 
unstable directions must be observed and assimilated into the
estimate and, more subtly, that accuracy can be improved by avoiding 
assimilation of stable directions. In particular
the papers \cite{carrassi2008data,trevisan2011chaos}
both explicitly identify the importance of observing
the unstable components of the dynamics, leading to
the notion of AUS: {\em assimilation in the unstable subspace}.
The paper \cite{lsetal} describes a theoretical analysis of 3DVAR
applied to the Navier-Stokes equation, when the data arrives
in discrete time, and
in this paper we address similar questions in the
continuous time setting; both papers include the possibility
of only partial observations in Fourier space. Taken together, 
the current paper and
\cite{lsetal} provide a significant generalization of the theory
in \cite{Tarn-Rasis} to dissipative infinite dimensional
dynamical systems prototypical of the high dimensional
problems to which filters are applied in practice; furthermore,
through studying partial observations,
they  give theoretical insight into the
idea of AUS as developed 
in \cite{carrassi2008data,trevisan2011chaos}.
The infinite dimensional
nature of the problem brings fundamental mathematical
issues into the problem, not addressed in previous finite
dimensional work. We make use of the
{\em squeezing property} 
of many dissipative dynamical systems
\cite{constantin1988navier,book:Temam1997},
including the Navier-Stokes equation, which drives
many theoretical results in this area, such
as the ergodicity studies pioneered by Mattingly
\cite{mattingly2002exponential,hairer2006ergodicity}. 
In particular our infinite dimensional analysis is motivated
by the theory developed in \cite{olson2003determining} and \cite{hayden2011discrete}, which are the first 
papers to study data assimilation directly through PDE analysis, using ideas from the theory of  determining modes in infinite dimensional dynamical systems.  However, in contrast to those papers, here we allow for noisy observations, and provide a methodology
that opens up the possibility of studying more general Gaussian 
approximate filters such as the Ensemble and the Extended Kalman filter (EnKF and ExKF).

Our point of departure for analysis is an ordinary differential equation (ODE) in a Banach space. 
Working in the limit of high frequency observations  we formally derive continuous time filters. 
This leads to a stochastic differential equation for 
state estimation, combining the original dynamics with
extra terms indcuing mean reversion to the noisily observed
signal. In the particular case of the Navier-Stokes equation
we get a stochastic PDE (SPDE) with 
additional mean-reversion term,
driven by spatially-correlated time-white noise.
This SPDE is central to our analysis as it is used to prove 
accuracy and stability results for the 3DVAR filter. In particular, 
 in the case when enough of the low modes of the Navier-Stokes equation 
are observed and the model has larger uncertainty than the data in these low modes, 
a situation known to practitioners as variance inflation, then the filter can lock on to a small neighbourhood of the true signal,
recovering from the initial error, if the error in the
observed modes is small.
The results are formulated in terms of the theory of random and
stochastic dynamical systems \cite{Arn98}, 
and both forward and pullback type results are proved, leading
to a variety of probabilistic accuracy and stability results,
in the mean square, probability and almost sure senses.

The paper is organised as follows. In Section 2 we derive the continuous-time limit of the 3DVAR filter applied to a general ODE in a Banach space,  by considering the limit of high frequency observations. In Section 3, we focus on the 2D Navier-Stokes equations and present the continuous time 3DVAR filter
within this setting. Sections 4 
and 5 are devoted, respectively, to results 
concerning forward accuracy and stability as well as pullback 
accuracy and stability, for the filter when applied to the
Navier-Stokes equation. 
In Section 6, we present various numerical investigations 
that corroborate our theoretical results. Finally 
in Section 7 we present conclusions.

%%%%%%%%%%%%%%%%%%%%%%%%%%%%%%%%%%%%%

\section{Continuous-Time Limit of 3DVAR}

%%%%%%%%%%%%%%%%%%%%%%%%%%%%%%%%%%%%%%%%

Consider $u$ satisfying the following ODE in a Banach space $X:$
\begin{equation} \label{e:model}
 \frac{du}{dt}=\cf(u), \quad u(0)=u_{0}\;.
\end{equation}
Our aim is to study online filters which combine
knowledge of this dynamical system with noisy observations
of $u_n=u(nh)$ to estimate the state of the system.
This is particularly important in applications where
$u_0$ is not known exactly, and the noisy data can
be used to compensate for this lack of initial
knowledge of the system state.

In this section we study approximate Gaussian filters
in the high frequency limit, leading to stochastic
differential equations which combine the dynamical
system with data to estimate the state.
As the formal derivation of continuous time filters in this
section is independent of the precise model under consideration, 
we employ the general framework of \eqref{e:model}.
We make some general observations, relating to a broad family
of approximate Gaussian filters, but focus mainly on 3DVAR. 
In subsequent sections, where we study stability and accuracy 
of the filter, we focus exclusively on 3DVAR,
and work in the context of the $2$D 
incompressible Navier-Stokes equation, as this is prototypical 
of dissipative semilinear partial differential equations.

%However, we will focus
%mainly on 3DVAR. Indeed in subsequent sections we will
%focus exclusively on 3DVAR, using the Navier-Stokes equation
%as a running example throughout.

%%%%%%%%%%%%%%%%%%%%%%%%%%%%%%%%%%%%%%%%

\subsection{Set Up - The Filtering Problem}
\label{ssec:setup}

%%%%%%%%%%%%%%%%%%%%%%%%%%%%%%%%%%%%%%%%

We assume that $u_0 \sim N(\hat{m}_{0},\hat{C}_{0})$
so that the initial data is only known statistically. 
The objective is to update the estimate of the state 
of the system sequentially in time, based
on data received sequentially in time.
We define the flow-map $\Psi:X \times \bbR^+ \to X$ so that
the solution to \eqref{e:model} is 
$u(t)=\Psi(u_{0};t)$. Let $H$ denote a linear operator
from $X$ into another Banach space $Y$, and assume that we
observe $Hu$ at equally spaced time intervals: 
\begin{equation} \label{e:data} 
y_{n}=H\Psi(u_{0};nh)+\eta_{n}. 
\end{equation}
Here $\{\eta_{n}\}_{n \in \bbN}$ is an i.i.d sequence,
independent of $u_0$, with $\eta_{1} \sim N(0,\Gamma).$   If we write $u_n=\Psi(u_0;nh)$, then
\begin{equation} \label{e:map}
u_{n+1}=\Psi(u_n;h),
\end{equation}
and
\begin{equation} \label{e:like}
y_n|u_n \sim N\bigl(Hu_n,\Gamma).
\end{equation}
We denote the accumulated data up to the time $n$ by 
\[
Y_{n}=\{y_{i}\}^{n}_{i=1}.
\]
Our aim is to find $\IP(u_n|Y_n).$ 

We will make the Gaussian ansatz that 
\begin{equation} \label{e:assum1}
\IP(u_{n}|Y_{n}) \simeq N(\hat{m}_{n},\hat{C}_{n}). 
\end{equation}
The key question in designing an approximate Gaussian filter, then, is to find an update rule of the form
\begin{equation} \label{e:update}
(\hat{m}_{n},\hat{C}_{n}) \mapsto  (\hat{m}_{n+1},\hat{C}_{n+1})
\end{equation}
Because of the linear form of the observations in \eqref{e:data}, together with the fact that the noise is mean zero-Gaussian,
 this update rule is determined directly if we impose 
a further Gaussian ansatz, now on the distribution 
of $u_{n+1}$ given $Y_{n}:$ 
\begin{equation} \label{e:assum2}
u_{n+1}|Y_{n} \sim N(m_{n+1},C_{n+1}) 
\end{equation}

With this in mind, the update \eqref{e:update}
is usually split into two parts. The first,
{\em prediction} (or forecast), step is the map
\begin{equation} \label{e:predict}
(\hat{m}_{n},\hat{C}_{n}) \mapsto  (m_{n+1},C_{n+1})
\end{equation}
The second, {\em analysis}, step is 
\begin{equation} \label{e:analysis}
(m_{n+1},C_{n+1}) \mapsto  (\hat{m}_{n+1},\hat{C}_{n+1}).
\end{equation}

For the prediction step
we will simply \emph{impose} the approximation 
\eqref{e:assum2} with 
\begin{equation} \label{e:mean_update}
m_{n+1}=\Psi(\hat{m}_{n};h), 
\end{equation}
while the choice of $C_{n+1}$ will depend on the choice of the specific filter. For the analysis step, 
assumptions \eqref{e:like}, \eqref{e:assum2} imply that 
\begin{equation}
 u_{n+1}|Y_{n+1} \sim N(\hat{m}_{n+1},\hat{C}_{n+1}) 
\end{equation}
and an application of Bayes rule, as applied in the
standard Kalman filter update \cite{harvey1991forecasting}, 
and using \eqref{e:mean_update}, gives us the nonlinear 
map \eqref{e:update} in the form
\begin{subequations} \label{e:normal_update}
\begin{eqnarray} 
\hat{C}_{n+1} &=& C_{n+1}-C_{n+1}H^*(\Gamma+HC_{n+1}H^*)^{-1}HC_{n+1} \\
\hat{m}_{n+1} &=& \Psi(\hat{m}_{n};h)+C_{n+1}H^*(\Gamma+HC_{n+1}H^*)^{-1}(y_{n+1}-Hm_{n+1})
\end{eqnarray}
\end{subequations}
The mean $\hat{m}_{n+1}$ is an element of the Banach space $X$,
and $\hat{C}_{n+1}$ is a linear 
symmetric and non-negative operator from $X$ into itself.

\begin{comment}
Alternatively, if $\Gamma$ and $C_{n+1}$ are positive-definite,
then this may be written as 
\begin{subequations} \label{e:inverse_update}
\begin{eqnarray} 
\hat{C}^{-1}_{n+1} &=& C^{-1}_{n+1}+\Gamma^{-1}, \\
\hat{C}^{-1}_{n+1}\hat{m}_{n+1} &=& C^{-1}_{n+1}m_{n+1}+\Gamma^{-1}y_{n+1}.  
\end{eqnarray}
\end{subequations}
\end{comment}

%$%%%%%%%%%%%%%%%%%%%%%%%%%%%%%%%%%%%%%%%%%%

\subsection{Derivation of The Continuous-Time Limit}
\label{sec:deriv}

%$%%%%%%%%%%%%%%%%%%%%%%%%%%%%%%%%%%%%%%%%%%

Together equations \eqref{e:mean_update} and
\eqref{e:normal_update}, which are generic 
for {\em any} approximate Gaussian filter, specify
the update for the mean once the equation determining 
$C_{n+1}$ is defined. We proceed to 
derive a continuous-time limit for the mean,
in this general setting, assuming 
that $C_n$ arises as an approximation of a continuous
process $C(t)$ evaluated at $t=nh$, so that $C_n \approx C(nh)$, 
and that $h \ll 1$.
Throughout we will assume that 
$\Gamma=h^{-1}\Gamma_{0}$.
This scaling implies that the noise variance is inversely
proportional to the time between observations and is
the relationship which gives a nontrivial stochastic
limit as $h \to 0.$

With these scaling assumptions 
equation (\ref{e:normal_update}b) becomes
\begin{equation*}
\hat{m}_{n+1}=\Psi(\hat{m}_{n};h)+hC_{n+1}H^*(\Gamma_{0}+
hHC_{n+1}H^*)^{-1}\bigl(y_{n+1}-H\Psi(\hat{m}_{n};h)\bigr).
\end{equation*} 
Thus
\begin{equation*}
\frac{\hat{m}_{n+1}-\hat{m}_{n}}{h} 
= \frac{\Psi(\hat{m}_{n};h)-\hat{m}_{n}}{h}+C_{n+1}H^*(\Gamma_{0}+hHC_{n+1}H^*)^{-1}\bigl(y_{n+1}-H\Psi(\hat{m}_{n};h)\bigr).
\end{equation*}
If we define the sequence $\{z_{n}\}_{n \in \mathbb{Z}^{+}}$ by 
\[
z_{n+1}=z_{n}+hy_{n+1}, \quad z_0=0\;,
\]
then we can rewrite the previous equation as 
\begin{align} \label{e:prelim}
\frac{\hat{m}_{n+1}-\hat{m}_{n}}{h} =& \frac{\Psi(\hat{m}_{n};h)-\hat{m}_{n}}{h}\\
&\quad\quad\quad
+C_{n+1}H^*(\Gamma_{0}+hHC_{n+1}H^*)^{-1}\left(\frac{z_{n+1}-z_{n}}{h}-H\Psi(\hat{m}_{n};h)\right). \notag 
\end{align}
Note that
$$
\Psi(\hat{m}_{n};h)=\hat{m}_{n}+h\cf(\hat{m}_{n})+\mathcal{O}(h^{2}).
$$ 

This is an Euler-Maruyama-like discretization of
a stochastic differential equation which,
if we pass to the limit of $h \rightarrow 0$ in
\eqref{e:prelim}, 
noting that we have assumed that $C_n \approx C(nh)$ 
for some continuous covariance process, is seen to be 

\begin{equation} \label{e:basic}
\frac{d\hat{m}}{dt}=\cf(\hat{m})+CH^*\Gamma^{-1}_{0}\left(\frac{dz}{dt}-H\hat{m}\right), \quad \hat{m}(0)=\hat{m}_{0}. 
\end{equation}

Equation \eqref{e:basic} is similar to the observer equation 
in the nonlinear  control literature \cite{Tarn-Rasis}.
Our objective in this paper is to study the stability and
accuracy properties of this stochastic model. Here stability
refers to the contraction of two different trajectories of
the filter \eqref{e:basic}, started at two different points, but
driven by the same observed data; and accuracy refers to estimating 
the difference between the true trajectory of \eqref{e:model}
which underlies the data, and the output of the filter
\eqref{e:basic}.
Similar questions are studied in finite dimensions in
\cite{Tarn-Rasis}. However, the infinite dimensional nature
of our problem, coupled with the fact that we study situations
where the state is only partially observed ($H$ is not invertible
on $X$) mean that new techniques of analysis are required,
building on the theory of semilinear dissipative PDEs
and infinite dimensional dynamical systems.

We now express the observation signal $z$ in terms of the 
truth $u$ in order to facilitate study of filter 
stability and accuracy. In particular, we have that 
\[
\left( \frac{z_{n+1}-z_{n}}{h} \right)=y_{n+1}=Hu_{n+1}+\frac{\sqrt{\Gamma_{0}}}{\sqrt{h}}\Delta w_{n+1}, 
\]
where $\{ \Delta w_{n}\}_{n \in \bbN}$ is an i.i.d sequence and $\Delta w_{1} \sim N(0,I)$ in $Y$. This corresponds to the 
Euler-Maruyama discretization of the SDE
\begin{equation}
\label{e:obs}
\frac{dz}{dt}=Hu+\sqrt{\Gamma_{0}}\frac{dW}{dt}, \quad z(0)=0. 
\end{equation}
Expressed in terms of the true signal $u$,
equation \eqref{e:basic} becomes
\begin{equation} \label{e:3dvar}
\frac{d\hat{m}}{dt}= \cf(\hat{m})+CH^*\Gamma^{-1}_{0}H\left(u-\hat{m}\right)+ \moda{C}H^*\Gamma^{-1/2}_{0}\frac{dW}{dt}.
\end{equation}

We complete the study of the continuous limit
with the specific example of 3DVAR.
This is the simplest filter of all in which the
prediction step is found by simply setting
$C_{n+1}=\hat{C}$ for some {\em fixed} 
covariance operator $\hat{C}$, independent of $n$.
Then equation \eqref{e:normal_update} shows that
${\hat C}_{n+1}=\hat{C}+{\cal O}(h)$ and
we deduce that the limiting covariance is simply constant:
$C(t)=\hat{C}(t)=\hat{C}$ for all $t\ge 0$. 
The present work will focus on this case and hence study
\eqref{e:3dvar} in the case where $C=\hat{C}$, 
a constant in time.

%%%%%%%%%%%%%%%%%%%%%%%%%%%%%%%%%%%%%%%%%%%%

\section{Continuous-Time 3DVAR for Navier-Stokes}
\label{sec:nse}

%%%%%%%%%%%%%%%%%%%%%%%%%%%%%%%%%%%%%%%%%%%%

In this section we describe application of the 3DVAR
algorithm to the two dimensional Navier-Stokes equation.
This will form the focus of the remainder of the paper.
In subsection \ref{ssec:nsef} we describe the 
forward model itself, namely we specify equation 
\eqref{e:model}, and then in subsection \ref{ssec}
we describe how data is incorporated into the model,
and specify equation \eqref{e:3dvar}, and the choices
of the (constant in time)
operators $C=\hat{C}$ and $\Gamma_0$ which appear
in it.

\subsection{Forward Model}
\label{ssec:nsef}

Let $\T^{2}$ denote the two-dimensional
torus of side $L:$ $[0,L) \times [0,L)$
with periodic boundary conditions. We
consider the equations
%\begin{subequations}
\begin{equation*}
\begin{array}{ccc}
%\begin{align}
\pd_{t}u(x, t) - \nu \Laplace u(x, t)
 + u(x, t) \cdot \nabla u(x, t) + \nabla p(x, t) 
&=& f(x)  
\\
\nabla \cdot u(x, t) &=& 0 
\\
u(x, 0) &=& u_{0}(x) 
\end{array}
%\label{eq:NSE}
\end{equation*}
for all $x \in \T^{2}$ and $t\in(0, \infty)$. 
%\end{subequations}
Here $u \colon \T^{2} \times (0, \infty) \to \R^{2}$ is a time-dependent vector field representing the velocity, $p \colon \T^{2} \times (0,\infty) \to \R$ is a time-dependent scalar field representing the pressure and $f \colon \T^{2} \to \R^{2}$ 
is a vector field representing the forcing which we take as 
time-independent for simplicity. The parameter $\nu$ 
represents the viscosity. We assume throughout that $u_0$
and $f$ have average zero over $\T^2$; it then follows
that $u(\cdot,t)$ has average zero over $\T^2$ for all
$t>0$. 

Define
%\[
$${\mathsf T}:= \left\{ {\rm {trigonometric\,polynomials\,}} 
u:\T^2 \to {\mathbb R}^2\,\Bigl|\, \nabla \cdot u = 0, \,\int_{\T^{2}} u(x) \, \rd x = 0 \right\}
$$
%\] 
and $\cH$ as the closure of ${\mathsf T}$ with respect to the
norm in $(L^{2}(\T^{2}))^{2} = L^{2}(\T^{2},\R^2)$.

We let $P:(L^{2}(\T^{2}))^{2} 
\to \cH$ denote the Leray-Helmholtz orthogonal projector. 
Given $k = (k_{1}, k_{2})^{\mathrm{T}}$, define $k^{\perp} := (k_{2}, -k_{1})^{\mathrm{T}}$. Then an orthonormal basis for
$\cH$ is given by $\psi_{k} \colon \R^{2} \to \R^{2}$, where 
\begin{equation}
\label{eq:fb}
\psi_{k} (x) := \frac{k^{\perp}}{|k|} \exp\Bigl(\frac{2 \pi i k \cdot x}{L}\Bigr)
\end{equation}
for $k \in \Z^{2} \setminus \{0\}$. 
Thus for $u \in \cH$ we may write
$$
u(x,t) = \sum_{k \in \Z^{2} \setminus \{0\}} u_{k}(t) \psi_{k}(x)
$$
where, since $u$ is a real-valued function, we have the 
reality constraint $u_{-k} = - \overline{u_{k}}.$
We define the projection operators $\Pl: \cH \to \cH$ 
and $\Ql:\cH \to \cH$ for 
$\lambda \in\mathbb{N}\cup\{\infty\}$ 
by
$$\Pl u(x,t)  = \sum_{|2\pi k|^2 <\lambda L^2} u_{k}(t) \psi_{k}(x),
\quad \Ql=I-\Pl.$$
Below we will choose the observation operator $H$ to be $\Pl.$

We define $A = -\frac{L^2}{4\pi^2} P \Laplace$, 
the Stokes operator, and, for every $s \in \R$, 
define the Hilbert spaces $\cH^s$ to be the domain of $A^{s/2}.$ 
We note that $A$ is diagonalized in $\cH$ in the basis comprised
of the $\{\psi_k\}_{k \in {\Z}^2\backslash\{0\}}$ 
and that, with the normalization employed
here, the smallest eigenvalue of $A$ is $\lambda_1=1.$
We use the norm 
$\|\cdot\|_{s}^2:=\langle \cdot, A^s \cdot \rangle$, 
the abbreviated notation $\norm{u}$ for the norm on $\cV:=\cH^1$,
and $|\cdot|$ for the norm on $\cH:=\cH^0$. 

Applying the projection $P$ to the Navier-Stokes
equation we may write it as an ODE in $\cH$:
\begin{equation}
\frac{\rd u}{\rd t} + \delta Au + \cB(u, u) = f, \quad u(0)=u_0.
\label{eq:nse}
\end{equation}
Here $\delta=4\pi^2\nu/L^2$ 
and the term $\cB(u,v)$ 
is the {\it symmetric} 
bilinear form defined by 
$$\cB(u,v) = \frac12 P(u \cdot \nabla v)+ \frac12 P(v \cdot \nabla u)$$
for all $u,v\in\cV$.
Finally, with abuse of notation, $f$ is the original forcing, 
projected into $\cH$. Equation \eqref{eq:nse} is in the form 
of equation \eqref{e:model} with
\begin{equation}
\label{eq:F}
\cf(u)=-\delta Au-\cB(u,u)+f.
\end{equation}

See \cite{constantin1988navier} 
for details of this formulation of the Navier-Stokes
equation as an ODE in $\cH$.
The following proposition
is a classical result which implies the existence
of a dissipative semigroup for the ODE \eqref{eq:nse}. 
See Theorems 9.5 and 12.5 in \cite{book:Robinson2001}
for a concise overview and \cite{temam1995navier,book:Temam1997}
for further details.

\begin{proposition} 
\label{prop:1}
Assume that $u_0 \in \cH^1$ and $f \in \cH$.
Then \eqref{eq:nse} has a unique strong solution on $t
\in [0,T]$ for any $T>0:$
$$u \in L^{\infty}\bigl((0,T);\cH^1\bigr)\cap L^{2}\bigl((0,T);\cH^2\bigr),\quad \frac{du}{dt} \in L^{2}\bigl((0,T);\cH\bigr).$$
Furthermore the equation has a global attractor $\cA$
and there is $R\in(0,\infty)$ such that, 
if $u_0 \in \cA$, then the solution from this initial
condition exists for all $t \in \bbR$ and $\sup_{t \in \bbR}\|u(t)\|^2 = R.$
\end{proposition}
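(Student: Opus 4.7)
The plan is to treat this via the classical Galerkin method combined with the standard dissipative semigroup theory for the 2D Navier--Stokes equation. First I would introduce the truncation $u^N = P_{\lambda_N} u$ onto the first $N$ eigenmodes of $A$, producing an ODE system with locally Lipschitz right-hand side and hence a local-in-time solution. To extend globally and extract a strong limit, I would derive $N$-uniform a priori bounds. Testing the truncated equation against $u^N$ and exploiting the cancellation $\inner{\cB(v,v)}{v}=0$ yields the energy inequality $\tfrac{d}{dt}|u^N|^2 + \delta\|u^N\|^2 \le c\,\delta^{-1}|f|^2$; testing against $Au^N$ and invoking a 2D-specific nonlinear estimate such as $|\inner{\cB(u,u)}{Au}| \le c|u|^{1/2}\|u\|\,|Au|^{3/2}$ (Ladyzhenskaya / Brezis--Gallou\"et), then using Young's inequality, absorbs the nonlinearity into $\tfrac{\delta}{2}|Au^N|^2$ and closes a Gronwall estimate on $\|u^N\|^2$ starting from $u_0\in\cH^1$. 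Reading $du^N/\rd t$ off the equation gives the matching $L^2((0,T);\cH)$ bound. Weak-$*$ compactness in $L^\infty((0,T);\cH^1)$, weak compactness in $L^2((0,T);\cH^2)$, and the Aubin--Lions lemma together produce a limit $u$ with enough strong convergence in $L^2((0,T);\cH)$ to identify the quadratic limit of $\cB(u^N,u^N)$. Uniqueness is then a short calculation on $w=u-v$, using the 2D interpolation $|\inner{\cB(w,u)}{w}| \le c|w|\|w\|\|u\|$, Young's inequality, and Gronwall.

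For the global attractor I would first verify dissipativity. The energy inequality combined with the Poincar\'e estimate $\|u\|^2 \ge |u|^2$ (since $\lambda_1 = 1$) yields an absorbing ball in $\cH$ whose radius depends only on $|f|$ and $\delta$; inserting the resulting eventual $\cH$-bound into the enstrophy estimate produces an absorbing ball in $\cH^1$ of radius $R_0$ depending only on $|f|$ and $\delta$. The regularising nature of the equation, together with the compact embedding $\cH^2\hookrightarrow\cH^1$, gives asymptotic compactness of the solution semigroup on $\cH^1$, and the standard abstract theorem (as presented in \cite{book:Temam1997,book:Robinson2001}) then produces a compact global attractor $\cA\subset\cH^1$. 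Because $\cA$ is invariant, each point of $\cA$ extends to a complete trajectory that remains inside the $\cH^1$ absorbing ball for all $t\in\bbR$; the uniform bound $\sup_{t\in\bbR}\|u(t)\|^2 = R$ follows by taking $R$ to be the diameter of $\cA$ in $\cH^1$, which is finite by compactness.

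The main obstacle is the 2D-specific control of the nonlinearity $\cB(u,u)$ at the $\cH^1$ energy level: it is precisely the interpolation inequality above (unavailable in 3D) that lets the cubic term be absorbed into the linear dissipation $\delta Au$, and without it one can neither close the strong-solution estimates nor obtain the $\cH^1$ absorbing ball on which the attractor lives. Once that single inequality is in hand, the remainder is a mechanical application of Gronwall, Aubin--Lions compactness, and the standard construction of a global attractor from a compact absorbing set.
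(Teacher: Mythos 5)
Your proposal is correct and follows the standard Galerkin/Aubin--Lions construction together with the usual energy--enstrophy absorbing-ball argument, which is precisely the classical proof the paper invokes by citation (Theorems 9.5 and 12.5 of \cite{book:Robinson2001} and the Temam references) rather than writing out. Two cosmetic remarks only: on the periodic 2D torus the identity $\langle \cB(u,u),Au\rangle=0$ is available and makes the enstrophy estimate even simpler than the Ladyzhenskaya interpolation you use, and the constant $R$ should be taken as the supremum of $\|\cdot\|^2$ over the compact attractor rather than its diameter; neither point affects the validity of your argument.
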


We let $\{\Psi(\cdot,t): \cH^1 \to \cH^1\}_{t \ge 0}$ 
denote the semigroup of 
solution operators for the equation \eqref{eq:nse} through $t$
time units. We note that by
working with weak solutions,
$\Psi(\cdot,t)$ can be extended to act on larger spaces $\cH^s$,
with $s \in [0,1)$, under the same assumption on $f$;
see Theorem 9.4 in \cite{book:Robinson2001}.  

%%%%%%%%%%%%%%%%%%%%%%%%%%%%%%%%%%%%%%%%

\subsection{3DVAR}
\label{ssec}

%%%%%%%%%%%%%%%%%%%%%%%%%%%%%%%%%%%%%%%%%%%%

We apply the analysis of the previous section to write
down the continuous time 3DVAR filter,
namely \eqref{e:3dvar} with $C(t)={\hat C}$ constant in time,
for the Navier-Stokes equation. We take $X=\cH$ and
throughout we assume that the data is found by observing
$\Pl u$ at discrete times, so that 
$H^*=H=\Pl$ and $Y=\Pl \cH$.
We assume that $A$, $\Gamma_0$ and $\hat{C}$
commute and, for simplicity of presentation, suppose that
\begin{equation}
\label{e:assumCG}
\hat{C} = \omega \sigma_0^2 A^{-2\zeta},
\quad \Gamma_0= \sigma_0^2 A^{-2\beta}\Pl.
\end{equation}
We set $\alpha=\zeta-\beta$.
These assumptions correspond to those made in \cite{lsetal}
where discrete time filters are studied.
Note that $A^{s/2}$ is defined on $\cH^s$; it
is also defined on $Y$ for every $s \in \bbR$,
provided that $\lambda$ is finite.

From equations \eqref{e:3dvar}, using \eqref{eq:F} and
the choices for $\hat{C}$ and $\Gamma_0$ we obtain
\begin{equation}
\frac{\rd \hu}{\rd t} 
+ \delta A\hu 
+ \cB(\hu, \hu) 
+\omega A^{-2\alpha}\Pl(\hu-u)
= f
+\omega \sigma_0A^{-2\alpha-\beta}\Pl\frac{\rd W}{\rd t},
\quad \hu(0)=\hu_0
\label{eq:nse2}
\end{equation}
where $W$ is a cylindrical Brownian motion in $Y$.
In the following we consider the cases of finite $\lambda$,
where the data is in a finite dimensional subspace of $\cH$,
and infinite $\lambda$, where $\Pl=I$ and the whole solution
is observed.

\begin{lem}
 \label{lem:SC}
For $\lambda=\infty$
assume that $4\alpha+2\beta>1.$ 
Then the stochastic convolution 
\[
W_A(t) = \int_0^t e^{\delta(t-s)A} A^{-2\alpha-\beta} \Pl dW(s)
\]
has a continuous version in $C^0([0,T], \cV)$ 
with all moments $\mathbb{E} \sup_{[0,T]} \|W_A\|^p$ finite
for all $T>0$ and $p>1$. 
\end{lem}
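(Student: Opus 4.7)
The plan is to verify the two standard Da Prato--Zabczyk conditions for a stochastic convolution against an analytic semigroup: a Hilbert--Schmidt trace condition, which secures existence and mean-square bounds in $\cV$, and a temporal regularity bound, which upgrades this to a continuous version via the factorization method. All moment bounds then come for free from the Gaussianity of the stochastic integral.

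First, I would reduce everything to a spectral calculation. Since $A$ is diagonalized in the Fourier basis $\{\psi_k\}$ with eigenvalues $\lambda_k$ (with $\lambda_1=1$), and since $A$, $\hat C$, $\Gamma_0$ and $e^{-\delta t A}$ all commute, the It\^o isometry gives, for $\lambda=\infty$,
\begin{equation*}
\EX \|W_A(t)\|^2
= \int_0^t \bigl\| A^{1/2} e^{-\delta(t-s)A} A^{-2\alpha-\beta}\bigr\|_{\mathrm{HS}(\cH)}^2\, ds
= \sum_k \lambda_k^{1-4\alpha-2\beta} \cdot \frac{1-e^{-2\delta t \lambda_k}}{2\delta\lambda_k}.
\end{equation*}
Bounding the time factor by $1/(2\delta \lambda_k)$ reduces the question to convergence of $\sum_k \lambda_k^{-4\alpha-2\beta}$. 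Since $\T^2$ is two-dimensional, Weyl's law gives $\lambda_k \asymp k$ (i.e.\ the number of eigenvalues below $\Lambda$ grows like $\Lambda$), so this sum converges precisely under the hypothesis $4\alpha+2\beta>1$. This is the key quantitative step and where the hypothesis enters.

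Next I would apply the factorization method. Pick $\gamma\in(0,1/2)$ small enough that $4\alpha+2\beta-1-2\gamma>0$; by the previous step this is possible since $4\alpha+2\beta>1$ is strict. Write
\begin{equation*}
W_A(t) = \frac{\sin(\pi\gamma)}{\pi}\int_0^t (t-s)^{\gamma-1} e^{-\delta(t-s)A} Y_\gamma(s)\, ds,\qquad
Y_\gamma(s) := \int_0^s (s-r)^{-\gamma} e^{-\delta(s-r)A} A^{-2\alpha-\beta}\, dW(r).
\end{equation*}
A spectral computation analogous to the one above, using $\int_0^s (s-r)^{-2\gamma} e^{-2\delta(s-r)\lambda_k}\,dr \lesssim \lambda_k^{2\gamma-1}$, shows that $\EX \|Y_\gamma(s)\|^2 \lesssim \sum_k \lambda_k^{-(4\alpha+2\beta-2\gamma)}$, which is finite by the choice of $\gamma$. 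Since $Y_\gamma$ is Gaussian in $\cV$, Fernique (or a direct Gaussian moment identity) gives $\EX \|Y_\gamma(s)\|^p$ finite for every $p$, uniformly on $[0,T]$, hence $Y_\gamma \in L^p(0,T;\cV)$ almost surely with all moments. Since the analytic semigroup $e^{-\delta t A}$ is bounded on $\cV$ and the convolution kernel $(t-s)^{\gamma-1}$ is integrable, a standard application of Young's (or H\"older's) inequality then shows that the factorization formula defines a continuous $\cV$-valued path, and by Jensen's inequality gives
\begin{equation*}
\EX \sup_{t\in[0,T]} \|W_A(t)\|^p \lesssim_{T,p} \EX \int_0^T \|Y_\gamma(s)\|^p\,ds < \infty
\end{equation*}
for every $p>1$ with $p\gamma>1$; smaller $p$ are handled by H\"older.

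The main technical obstacle is the Weyl-type eigenvalue asymptotic $\lambda_k \asymp k$ that underlies the reduction $4\alpha+2\beta>1$; once this is in hand the rest is a routine, if slightly delicate, adaptation of the Da Prato--Zabczyk factorization trick, the delicacy being the need to choose $\gamma$ in the strict interval carved out by the strict inequality in the hypothesis. The result in $C^0([0,T],\cV)$ then follows and, as noted, Gaussianity automatically delivers finiteness of all $L^p$ moments of the supremum.
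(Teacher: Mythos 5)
Your proposal is correct and follows essentially the same route as the paper: the only substantive verification in the paper's proof is that $4\alpha+2\beta>1$ makes the noise covariance $A^{-4\alpha-2\beta}\Pl$ trace class (your spectral sum $\sum_k \lambda_k^{-4\alpha-2\beta}<\infty$ via the Weyl asymptotic $\lambda_n\asymp n$ on $\T^2$ is exactly this check), after which the paper simply cites Theorem 5.16 and (5.23) of Da Prato--Zabczyk for the continuous $\cV$-valued version and the moments of the supremum. Your factorization argument is precisely the proof of that cited theorem, so you have opened the black box rather than taken a different path; the only cosmetic remark is that you silently corrected the sign in the semigroup, writing $e^{-\delta(t-s)A}$ where the paper's statement has $e^{\delta(t-s)A}$.
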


\begin{proof}
If $\lambda<\infty$ then the covariance of the
driving noise is automatically trace-class
as it is finite dimensional;
since $4\alpha+2\beta>1$ it follows that 
the covariance of the driving noise is also
trace-class when $\lambda=\infty.$
The desired result follows from Theorem 5.16
in \cite{Dap-Z}.  For the moments see \cite[(5.23)]{Dap-Z}.
\end{proof}

It is only in the case of full observations 
(i.e., $\lambda=\infty$) that we need the additional 
regularity condition $4\alpha+2\beta>1$. This
may be rewritten as $\zeta>\frac14+\frac12\beta$ and relates the
rate of decay, in Fourier space, of the model variance 
to the observational variance. Although a key driver
for our accuracy and stability results (see Remark \ref{rem:eps}
below) will be variance inflation, meaning that the observational
variance is smaller than the model variance in the low Fourier modes,
this condition on $\zeta$ allows regimes in which, for high
Fourier modes, the situation is reversed.

\begin{pro}
 \label{prop:ex-S2DNS}
Assume that $u_0 \in \cA$
and let $u$ be the corresponding solution of (\ref{eq:nse}) 
on  the global attractor $\cA$. 
For $\lambda=\infty$
suppose $4\alpha+2\beta>1$ and $\alpha>-\frac12.$
Then for any initial condition $\hat{m}(0) \in \cH$ 
there is a stochastic process $\hat{m}$
which is the unique strong solution
of (\ref{eq:nse2}) in the spaces 
\[
\hat{m} \in 
L^2\bigl((0,T),\cV\bigr) \cap  C^0\bigl([0,T],\cH\bigr)
%{L^\infty\bigl((0,T),\cH\bigr) \cap L^2\bigl((0,T),\cV\bigr)}
%\quad\text{and}\quad 
%\hat{m} \in C^0\bigl([0,T],\cH\bigr) 
\] 
for all $T>0$.
Moreover,
\[
 \EX\|\hat{m}\|^2_{L^\infty\bigl((0,T),\cH\bigr)} +  
\EX\|\hat{m}\|^2_{L^2\bigl((0,T),\cV\bigr)} <\infty. 
\]
To be more precise
\[
\hat{m}\in L^2\Bigl(\Omega, C^0_\loc\bigl([0,\infty),\cH \bigr)\Bigr) \cap L^2\Bigl(\Omega,L^2_\loc\bigl([0,\infty),\cV\bigr)\Bigr)\;.
\]
\end{pro}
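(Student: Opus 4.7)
The natural approach is to remove the stochastic forcing by subtraction. Define the Ornstein--Uhlenbeck process $Z$ satisfying $dZ + \delta AZ\,dt = \omega\sigma_0 A^{-2\alpha-\beta}\Pl\,dW$ with $Z(0)=0$; Lemma \ref{lem:SC} yields $Z\in C^0([0,T],\cV)$ almost surely with moments of every order (the assumption $4\alpha+2\beta>1$ being needed only in the full-observation case $\lambda=\infty$ to ensure trace-class noise in $\cV$). Setting $v := \hat{m} - Z$ transforms \eqref{eq:nse2} into the pathwise random PDE
\begin{equation*}
\partial_t v + \delta A v + \cB(v+Z,v+Z) + \omega A^{-2\alpha}\Pl v = f - \omega A^{-2\alpha}\Pl(Z - u), \qquad v(0)=\hat{m}_0,
\end{equation*}
which for almost every $\omega\in\Omega$ is a deterministic semilinear parabolic equation driven by the $\cV$-valued continuous path $Z(\cdot,\omega)$ and by the attractor trajectory $u\in L^\infty(\bbR,\cV)$ supplied by Proposition \ref{prop:1}.

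I would attack this random PDE pathwise by Galerkin approximation $v^{(N)} = P_N v$ in the spectral basis. Testing with $v^{(N)}$ in $\cH$ gives the Stokes dissipation $\delta\|v^{(N)}\|^2$, eliminates $\langle \cB(v^{(N)},v^{(N)}),v^{(N)}\rangle = 0$ via the divergence-free cancellation, and produces the nonnegative contribution $\omega\langle A^{-2\alpha}\Pl v^{(N)}, v^{(N)}\rangle = \omega\|A^{-\alpha}\Pl v^{(N)}\|^2 \geq 0$ from the added damping (since $\Pl$ commutes with $A$). The three nonlinear cross terms involving $Z$ are controlled by the classical 2D estimate $|\langle \cB(a,b),c\rangle| \leq C|a|^{1/2}\|a\|^{1/2}\|b\||c|^{1/2}\|c\|^{1/2}$ combined with Young's inequality and the $\cV$-bound on $Z$. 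The main obstacle, and the reason for the hypothesis $\alpha > -\tfrac12$, is the source $\omega A^{-2\alpha}\Pl(Z-u)$: when $\alpha<0$ this is an unbounded positive power of $A$ applied to a $\cV$-valued field, producing an element of $\cH^{1+4\alpha}$. This lies in the dual space $\cV' = \cH^{-1}$ precisely when $1+4\alpha \geq -1$, and the strict inequality $\alpha > -\tfrac12$ provides the margin needed to absorb its pairing with $v^{(N)}\in\cV$ into $\delta\|v^{(N)}\|^2$ via Young. The smoother attractor signal $u\in L^\infty_t\cV$ is handled analogously but more easily.

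Passage to the limit $N\to\infty$ is then standard: the pathwise $L^\infty_t\cH\cap L^2_t\cV$ bound is global in time by dissipativity, and weak-$\ast$/weak compactness together with Aubin--Lions applied to $\partial_t v^{(N)}\in L^2_t\cV'$ yields a strong solution $v$, whence $\hat{m} := v + Z \in C^0([0,T],\cH)\cap L^2((0,T),\cV)$. Uniqueness follows from the difference equation
\begin{equation*}
\partial_t w + \delta Aw + \cB(w, v_1+v_2+2Z) + \omega A^{-2\alpha}\Pl w = 0, \qquad w := v_1 - v_2,
\end{equation*}
tested with $w$: the remaining trilinear term is controlled by Ladyzhenskaya, and Gronwall with the $L^1_\loc$ coefficient $\|v_1+v_2+2Z\|^2$ closes the argument. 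Finally, taking expectations in the energy inequality and combining the moment bounds on $Z$ from Lemma \ref{lem:SC} with the uniform $\cV$-bound on $u$ from Proposition \ref{prop:1} delivers the advertised $\EX\|\hat{m}\|^2_{L^\infty_t\cH} + \EX\|\hat{m}\|^2_{L^2_t\cV} < \infty$, and hence the local-in-time formulation in $L^2(\Omega, C^0_\loc([0,\infty),\cH)) \cap L^2(\Omega, L^2_\loc([0,\infty),\cV))$.
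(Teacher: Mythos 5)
Your pathwise construction is sound and is essentially the route the paper indicates: the subtraction $v=\hat m - Z$ is exactly the device used again in Section 5 at equation (\ref{e:randPDE}); your reading of the role of $\alpha>-\frac12$ (so that $A^{-2\alpha}$ maps $\cV$ into $\cV'$ with room to spare, equivalently is a relatively compact perturbation of $\delta A$) matches the paper's remark; and the Galerkin/Aubin--Lions/Gronwall scheme for pathwise existence, uniqueness and regularity is correct.

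The gap is in the final step, the second-moment bounds. Your energy inequality for $v$ necessarily contains the cross term $2\langle\cB(Z,v),v\rangle\le \tfrac{\delta}{2}\|v\|^2+C\|Z\|^2|v|^2$, so the pathwise Gronwall factor is $\exp\bigl(C\int_0^T\|Z(s)\|^2\,ds\bigr)$. The random variable $\int_0^T\|Z\|^2\,ds$ is a positive quadratic functional of a Gaussian, and by Fernique's theorem $\EX\exp\bigl(c\int_0^T\|Z\|^2\,ds\bigr)$ is finite only for $c$ below a threshold scaling like $(\omega\sigma_0)^{-2}$; for the fixed constant $C\sim (K')^2/\delta$ produced by Young's inequality there is no reason this holds, and the proposition assumes no smallness of $\omega\sigma_0$. (Taking expectations before Gronwall fares no better: $\EX[\|Z\|^2|v|^2]$ does not factor.) So ``taking expectations in the energy inequality'' does not deliver $\EX\|\hat m\|^2_{L^\infty((0,T),\cH)}+\EX\|\hat m\|^2_{L^2((0,T),\cV)}<\infty$. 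The fix is to obtain the moments directly from the SPDE rather than from the transformed random PDE: apply It\^o's formula to $|\hat m|^2$, use $\langle\cB(\hat m,\hat m),\hat m\rangle=0$ and the sign of the damping term so that no $Z$-dependent Gronwall coefficient ever appears, bound $\langle f+\omega A^{-2\alpha}\Pl u,\hat m\rangle$ by Young against $\delta\|\hat m\|^2$, and keep the It\^o correction $\tfrac12\omega^2\sigma_0^2\,{\trace}_{\cH}\bigl(A^{-4\alpha-2\beta}\Pl\bigr)$; this yields a closed linear differential inequality for $\EX|\hat m|^2$ and, upon integration, the bound on $\EX\int_0^T\|\hat m\|^2$. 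This is what the paper means by ``a straightforward application of It\^o's formula to $|\hat m|^2$'', and the computation is carried out explicitly in the proof of Theorem \ref{t:stabf}, inequality (\ref{eq:L}).
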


\begin{proof}
The proof of this theorem is well known
without the function $u$
and the additional linear term.
See for example Theorem 15.3.1 in the book \cite{dap2} 
using fixed point arguments 
based on the mild solution. Another reference is 
\cite[Theorem 3.1]{Fl:94} based on spectral Galerkin methods.
See also \cite{FlMa:95} or \cite{Sch:87}.
Nevertheless, our theorem is a straightforward modification of their arguments.
For simplicity of presentation we refrain from giving 
a detailed argument here.

The existence and uniqueness is established either by 
Galerkin methods or fixed-point arguments.
The continuity of solutions follows from the standard fixed-point arguments 
for the mild formulation in the space $C^0\bigl([0,T],\cH\bigr)$.

Finally, as we assume the covariance of the Wiener process 
to be trace-class, the bounds on the moments are a straightforward 
application of It\^o's formula 
(cf. \cite[Theorem 4.17]{Dap-Z})
to $|\hat{m}|^2$ and to $\|\hat{m}\|^2$, 
in order to derive standard a-priori estimates. 
This is very similar to the method of proof 
that we use to study mean square stability 
in Section \ref{sec41}.

The additional linear term $\omega A^{-2\alpha}\Pl\hu$ does not change the result in
any substantive fashion.
If $\lambda<\infty$ then the proof is essentially identical,
as the additional term is a lower order perturbation of the Stokes operator.

If $\lambda=\infty$ then minor modifications of the 
proof are necessary, but do not change the proof significantly.
This is since, for $\alpha>-\frac12$, 
the additional term 
$\omega A^{-2\alpha}\hat{m}$ 
is a compact perturbation of the 
Stokes operator. 

The additional forcing term,
depending on $u$, is always sufficiently regular 
for our argument,
as we assume $u$ to be on the attractor (see Proposition \ref{prop:1}). 
%Solutions on the attractor are even smooth,
%provided the forcing  $f$ is smooth.
\end{proof}

\begin{rem}
For $\lambda=\infty$
it is possible to extend the preceding result 
to other ranges of $\alpha$, but this will change the proof. 
Hence, for simplicity, 
for $\lambda=\infty$ we always
assume that
$\alpha>-\frac12$.

We comment later on 
the fact that the solutions to (\ref{eq:nse2}) 
generate a stochastic dynamical system. 
As we need two-sided Wiener-processes for this 
we postpone the discussion to Section \ref{sec:PB}. 
\end{rem}
%\note{KCZ:Shouldn't the reference here be \ref{eq:nse2}? KL: agreed, changed.}
%%%%%%%%%%%%%%%%%%%%%%%%%%%%%%%%%%%%%%%%%%%%

\section{Forward Accuracy and Stability} % of the Continuous Model}

%%%%%%%%%%%%%%%%%%%%%%%%%%%%%%%%%%%%%%%%%%%%

We wish to study conditions under which two filters, 
starting from different points but driven by the same 
observations, converge (stability); and conditions under
which the filter will, asymptotically, track the 
true signal (accuracy). 
Establishing such results has been
the object of study in control theory for some time,
and the paper \cite{Tarn-Rasis} contains 
foundational work in both the discrete and continuous time
settings. However the
infinite dimensional nature of the problem at hand brings
significant new challenges to the analysis. The key idea
driving the proofs is that, although the Navier-Stokes equations
themselves may admit exponentially diverging trajectories,
the observations can counteract this instability, provided
the observation space is large enough. Roughly speaking
the exponential divergence of the Navier-Stokes equations
is dominated by a finite set low Fourier modes, whilst the rest
of the space contracts. If the observations provide
information about enough of the low Fourier modes, 
then this can counteract the instability. 
This basic idea underlies the accuracy and stability
results proved in subsections \ref{sec41} and \ref{ssec:z}.

A key technical estimate in what follows is the following
(see \cite{book:Temam2001}): 
\begin{lem}
\label{lem:B1}
For the symmetric bilinear map
$$\cB(u,v) = \tfrac12 P(u \cdot \nabla v)+ \tfrac12 P(v \cdot \nabla u)
$$
there 
is constant $K'\ge1$ such that
for all $v,w \in \cV$
\begin{equation}
\langle \cB(v,v)-\cB(w,w), v-w\rangle \leq K'\|w\| \|v-w\| \cdot |v-w|,
\label{eq:Best0}
\end{equation}
\begin{equation}
\label{eq:Best1}
|\langle \cB(w,v),v \rangle| \le K' \|w\| \|v\| |v|
\quad\text{and}\quad
|\langle \cB(w,v),z \rangle| \le K' \|v\| \|w\| \|z\|\;.
\end{equation}
Furthermore, for all $v \in \cV$, 
\begin{equation}
\langle \cB(v,v), v \rangle =0
\label{eq:Best2}
\end{equation}
\end{lem}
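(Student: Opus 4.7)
The plan is to reduce all three bounds to two essentially standard ingredients: the cancellation identity arising from integration by parts on divergence-free fields in the periodic setting, and the two-dimensional Ladyzhenskaya inequality $\|u\|_{L^4}\le C|u|^{1/2}\|u\|^{1/2}$ for $u\in\cV$. With these in hand, the three estimates \eqref{eq:Best0}--\eqref{eq:Best2} will follow in an order that exploits each previous item.

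First I would prove \eqref{eq:Best2}, the cancellation. Since $\cB$ is the symmetrization of $P(\cdot\,\nabla\,\cdot\,)$, one has $\cB(v,v)=P(v\cdot\nabla v)$. Because $v\in\cV$ satisfies $\nabla\cdot v=0$ in the periodic torus and the Leray projector $P$ acts as the identity on $\cH$, a direct integration by parts gives
\[
\langle \cB(v,v),v\rangle=\int_{\T^2}(v\cdot\nabla v)\cdot v\,\rd x
=\tfrac12\int_{\T^2}v\cdot\nabla|v|^2\,\rd x
=-\tfrac12\int_{\T^2}(\nabla\cdot v)|v|^2\,\rd x=0.
\]

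Next I would establish \eqref{eq:Best1}. For the first inequality I split the bilinear form and use divergence-freeness of $w\in\cV$ to kill the term $\langle P(w\cdot\nabla v),v\rangle$ exactly as in the previous step. What remains is $\langle\cB(w,v),v\rangle=\tfrac12\langle v\cdot\nabla w,v\rangle$, which by Hölder and Ladyzhenskaya is controlled by $C\|v\|_{L^4}^2\|w\|\le C|v|\,\|v\|\,\|w\|$. For the second inequality, Hölder gives $|\langle w\cdot\nabla v,z\rangle|\le \|w\|_{L^4}\|\nabla v\|_{L^2}\|z\|_{L^4}$, and another application of Ladyzhenskaya bounds this by $C\|w\|\,\|v\|\,\|z\|$ after using $|u|\le\|u\|$ (valid since $\lambda_1=1$). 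The symmetric term $\langle v\cdot\nabla w,z\rangle$ is handled identically, and we take $K'$ to be the maximum of the resulting absolute constants.

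Finally I would derive \eqref{eq:Best0} from \eqref{eq:Best2} and \eqref{eq:Best1}. Setting $e=v-w$ and using bilinearity plus symmetry of $\cB$,
\[
\cB(v,v)-\cB(w,w)=\cB(v-w,v+w)=\cB(e,e)+2\cB(e,w).
\]
Taking the inner product with $e$, the first piece vanishes by \eqref{eq:Best2} applied to $e\in\cV$, and the second is bounded via \eqref{eq:Best1} (first inequality, with roles of $w$ and $v$ adjusted) by $2K'\|w\|\,\|e\|\,|e|$. Enlarging $K'$ by a factor of two if necessary absorbs the constant, yielding \eqref{eq:Best0} with the same constant as in the other bounds.

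The only real obstacle is bookkeeping: making sure the integration-by-parts identities are justified (periodic boundary conditions and $\nabla\cdot u=0$ for $u\in\cV$ do justify them, with no boundary terms), and that the Ladyzhenskaya constant is indeed available in 2D under the embedding $\cV\hookrightarrow L^4(\T^2;\R^2)$. Everything else is routine manipulation of the trilinear form.
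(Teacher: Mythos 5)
Your proof is correct and follows essentially the same route as the paper: cancellation $\langle\cB(v,v),v\rangle=0$ via integration by parts, the trilinear bounds from the 2D embedding $\cV\hookrightarrow L^4$ (equivalently $H^{1/2}\hookrightarrow L^4$ plus interpolation, i.e.\ Ladyzhenskaya), and \eqref{eq:Best0} deduced from the decomposition $\cB(v,v)-\cB(w,w)=\cB(v-w,v-w)+2\cB(w,v-w)$ together with \eqref{eq:Best1} and \eqref{eq:Best2}. The only cosmetic difference is that the paper absorbs the factor of two into a single constant $K'$ up front rather than enlarging it at the end.
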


%%%%%%%%%%%%%%%%%%%%%%%%%%%%%%%%%%%%%%%%%%%%%
%

Notice that (\ref{eq:Best0}) implies that, for $K=(K')^2/\delta$,
\begin{equation}
\langle \cB(v,v)-\cB(w,w), v-w\rangle \leq \tfrac12 K\|w\|^2|v-w|^2 +\tfrac12\delta \|v-w\|^2,
\quad \forall v,w \in \cV.
\label{eq:Best}
\end{equation}
This estimate will be used to control the possible
exponential divergence of Navier-Stokes trajectories
which needs to be compensated for by means of observations.

%%%%%%%%%%%%%%%%%%%%%%%%%%%%%%%%%%%%%%%%%%%%

\begin{proof}[Proof of Lemma \ref{lem:B1}.]
We give a brief overview of the main ideas required
to prove this well known result.
First notice that the assumption $K'\ge1$ is without loss of generality.
We need this later for simplicity of presentation. 

Then (\ref{eq:Best0}) is a direct consequence of (\ref{eq:Best1})
and (\ref{eq:Best2}), by
using the identity 
$$
\cB(v,v)-\cB(w,w) = \cB(v+w, v-w) = \cB(v-w, v-w)+ 2 \cB(w, v-w)
$$
For simplicity of presentation, we use the same constant in (\ref{eq:Best0})
and (\ref{eq:Best1}).

For Navier-Stokes it is well-known that
$\langle (w\cdot\nabla)v,v \rangle=0$, as the divergence of $w$ is $0$.
Thus there is constant $c_1$ such that
$$
2|\langle \cB(w,v),v \rangle| 
= \langle (v\cdot\nabla)w,v \rangle| 
\le c_1\|w\| |v|_{L^4}^2\;.
$$
Since, in two dimensions $H^{1/2} = D(A^{1/4})$ is embedded 
into $L^4$, there is constant $c_2$ such that
$
|\langle \cB(w,v),z \rangle| \le c_2 \|w\| \|v\|_{H^{\frac12}} 
\|z\|_{H^{\frac12}}\;.$ The first result in (\ref{eq:Best1}) 
then follows from the
interpolation inequality $\|v\|^2_{H^{\frac12}} \le c |v| \|v\|$ 
and the second from the embedding
$\|v\|_{H^{\frac12}} \le c \|v\|$.
\end{proof}

Finally, in the following a key role will be played
by the constant $\gamma$ defined as follows: 
\begin{ass}
Let $\gamma$ be
the largest positive constant such that
\begin{equation}
 \label{e:bass2a}
\frac12 \gamma|h|^2 \le \langle  \omega A^{-2\alpha}\Pl h , h \rangle  + \frac12 \delta \| h \|^2
\qquad \text{for all }h\in \cV.
\end{equation}
\end{ass}

It is clear that such a $\gamma$ always exists, and
indeed that 
$\gamma \ge \delta$, as one has $\langle A^{-2\alpha}\Pl h , h \rangle\ge 0 $.  We will study how
$\gamma$ depends on $\lambda$ and $\omega$ in subsequent
discussions where we show that, by choosing $\lambda$
and $\omega$ large enough, $\gamma$ can be made
arbitrarily large.

%%%%%%%%%%%%%%%%%%%%%%%%%%%%%%%%%%%%%%%

\subsection{Forward Mean Square Accuracy}
\label{sec41} 

%%%%%%%%%%%%%%%%%%%%%%%%%%%%%%%%%%%%%%%%%

%%%%%%%%%%%%%%%%%%%%%%%%%%%%%%%%%%%%%%%%%%%%%%%%%%%%%%%%

\begin{theo}[\bf{}Accuracy]
\label{thm:mainacc}
Let $\hat{m}$ solve \eqref{eq:nse2}, and let 
$u$ solve \eqref{eq:nse} with initial condition
on the global attractor $\cA$.
For $\lambda=\infty$
assume $4\alpha+2\beta>1$ and $\alpha>-\frac12$. 
Suppose that $\gamma$, the largest positive number
such that \eqref{e:bass2a} holds, satisfies 
\[\gamma=KR+\gamma_0 \qquad\text{for some } \gamma_0>0, 
\]
where $K$ is the constant appearing in \eqref{eq:Best}
and $R$, recall, is defined by $R=\sup_{t \in \bbR} \|u(t)\|^2$.
Then 
\[
\EX|\hat{m}(t)-u(t)|^2 
\leq \mathrm{e}^{-\gamma_0 t}|\hat{m}(0)-u(0)|^2+\omega^2\sigma_0^2\int_0^t
\mathrm{e}^{-\gamma_0(t-s)} 
 {\trace}_{\cH}\bigl(A^{-4\alpha-2\beta}\Pl\bigr) ds.
\]
As a consequence 
\[
\limsup_{t\to\infty} \EX|\hat{m}(t)-u(t)|^2 \leq 
\tfrac1{\gamma_0}\omega^2\sigma_0^2 {\trace}_{\cH}\bigl(A^{-4\alpha-2\beta}\Pl\bigr).
\]
\end{theo}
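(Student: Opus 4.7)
The strategy is to apply Itô's formula to $|e|^2$ where $e = \hat{m} - u$, use the dissipative structure of the Stokes operator together with the extra mean-reversion term to dominate the destabilizing contribution of the Navier-Stokes nonlinearity, and then apply Gronwall. The bound on the attractor, $\|u(t)\|^2 \leq R$, is the only place the assumption on the initial data of $u$ enters.

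Subtracting \eqref{eq:nse} from \eqref{eq:nse2}, the error process $e := \hat{m}-u$ satisfies
\begin{equation*}
de + \delta A e\, dt + \bigl(\cB(\hat{m},\hat{m})-\cB(u,u)\bigr)\,dt + \omega A^{-2\alpha}\Pl e\, dt = \omega\sigma_0 A^{-2\alpha-\beta}\Pl\, dW.
\end{equation*}
I would apply the Itô formula to $|e|^2$ (this is the step requiring the most care in the infinite-dimensional setting: in practice I would carry out the argument on spectral Galerkin approximations, where all objects are finite-dimensional, derive the estimate below uniformly, and then pass to the limit using the regularity guaranteed by Proposition \ref{prop:ex-S2DNS}). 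This gives
\begin{equation*}
d|e|^2 = -2\delta\|e\|^2\,dt - 2\langle \cB(\hat{m},\hat{m})-\cB(u,u), e\rangle\,dt - 2\omega\langle A^{-2\alpha}\Pl e, e\rangle\,dt + \omega^2\sigma_0^2\,\trace_{\cH}(A^{-4\alpha-2\beta}\Pl)\,dt + dM_t,
\end{equation*}
where $M_t$ is a mean-zero local martingale. Taking expectations (after a standard localization argument to kill the martingale term) yields
\begin{equation*}
\frac{d}{dt}\EX|e|^2 = -2\delta\EX\|e\|^2 - 2\EX\langle \cB(\hat{m},\hat{m})-\cB(u,u), e\rangle - 2\omega\EX\langle A^{-2\alpha}\Pl e, e\rangle + \omega^2\sigma_0^2\,\trace_{\cH}(A^{-4\alpha-2\beta}\Pl).
\end{equation*}

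Next I would apply the bilinear estimate \eqref{eq:Best} with $v=\hat{m}$, $w=u$ to obtain
\begin{equation*}
-2\langle \cB(\hat{m},\hat{m})-\cB(u,u), e\rangle \le K\|u\|^2|e|^2 + \delta\|e\|^2 \le KR|e|^2 + \delta\|e\|^2,
\end{equation*}
using $\|u(t)\|^2\le R$ on the attractor. The remaining deterministic terms combine via the defining inequality \eqref{e:bass2a}:
\begin{equation*}
-\delta\|e\|^2 - 2\omega\langle A^{-2\alpha}\Pl e, e\rangle \le -\gamma|e|^2.
\end{equation*}
Inserting these and using $\gamma = KR+\gamma_0$ gives
\begin{equation*}
\frac{d}{dt}\EX|e|^2 \le (KR-\gamma)\EX|e|^2 + \omega^2\sigma_0^2\,\trace_{\cH}(A^{-4\alpha-2\beta}\Pl) = -\gamma_0\EX|e|^2 + \omega^2\sigma_0^2\,\trace_{\cH}(A^{-4\alpha-2\beta}\Pl).
\end{equation*}
Gronwall's inequality then yields the stated bound, and letting $t\to\infty$ gives the limsup via $\int_0^t e^{-\gamma_0(t-s)}\,ds \to 1/\gamma_0$.

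The main technical obstacle is the rigorous application of Itô's formula: the nonlinearity $\cB(\hat{m},\hat{m})$ is not integrable enough against $e$ in the $\cH$-topology without more work, so one must either invoke a suitable energy-type Itô formula for SPDEs with a locally Lipschitz drift of Navier-Stokes type, or run the argument on Galerkin truncations and pass to the limit using the a priori bounds in $L^2(\Omega; L^2_{\loc}([0,\infty),\cV))\cap L^2(\Omega; C^0_{\loc}([0,\infty),\cH))$ from Proposition \ref{prop:ex-S2DNS}. The remaining algebraic manipulations using \eqref{eq:Best} and \eqref{e:bass2a} are straightforward.
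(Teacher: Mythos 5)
Your proposal is correct and follows essentially the same route as the paper: subtract the equations, apply the It\^o formula to $|e|^2$ (the paper invokes Theorem 4.17 of Da Prato--Zabczyk where you propose a Galerkin/localization argument), control the nonlinearity via \eqref{eq:Best} and the attractor bound $\|u\|^2\le R$, absorb the dissipative and mean-reversion terms using \eqref{e:bass2a}, and conclude by Gronwall. All constants and signs check out, so there is nothing further to add.
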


%%%%%%%%%%%%%%%%%%%%%%%%%%%%%%%%%%%%%%%%%%%%%%%%%%%%%%%

\begin{proof}
Define the error $e=\hat{m}-u$
and subtract equation \eqref{eq:nse} from \eqref{eq:nse2}
to obtain 
\begin{equation*}  
de +\delta Ae= \Bigl(\cB(u,u)-\cB(\hat{m},\hat{m})-\omega
A^{-2\alpha}\Pl e \Bigr)dt+\omega \sigma_0A^{-2\alpha-\beta}\Pl dW\;.
\end{equation*}
Using the It\^o formula from Theorem 4.17
of \cite{Dap-Z}, together with \eqref{eq:Best}, yields
\begin{align*}
\tfrac12 d |e |^2 \leq&
\Bigl(-\tfrac12 \delta \| e \|^2 + \tfrac12 K \| u(t) \|^2  | e |^2 
-\langle \omega A^{-2\alpha}\Pl e , e \rangle \Bigr)dt\\ 
&\qquad \qquad
+ \langle e ,\omega \sigma_0 A^{-2\alpha-\beta} \Pl dW\rangle
+ \tfrac12 \omega^2\sigma_0^2 {\trace}_{\cH}\bigl(A^{-4\alpha-2\beta}\Pl
\bigr) dt
 \;.
\end{align*}
Here we have used the fact that 
the projection $\Pl$ and $A$ commute.
% the trace of $A^{s}$ in $\cH$
%dominates the trace in $Y$.
Applying \eqref{e:bass2a} and taking expectations we
obtain
\[
\frac{d}{dt} \EX|e(t)|^2
\leq
-\bigl(\gamma-K\|u(t)\|^2\bigr)\cdot\EX| e(t) |^2 
+ \omega^2\sigma_0^2 {\trace}_{\cH}\bigl(A^{-4\alpha-2\beta}\Pl \bigr) 
 \;,
\]
But $\sup_{t \ge 0} \|u(t)\|^2 =R<\infty$ by Proposition \ref{prop:1}
and hence, by assumption on $\gamma$,
\[
\frac{d}{dt} \EX|e(t)|^2
\leq
-\gamma_0\cdot \EX| e(t) |^2 
+ \omega^2\sigma_0^2 {\trace}_{\cH}\bigl(A^{-4\alpha-2\beta}\Pl
\bigr).
 \;,
\]
The result follows from a Gronwall argument. 
\end{proof}

\begin{rem}
\label{rem:eps}
We now briefly discuss the choice of parameters
to ensure satisfaction of the conditions of Theorem
\ref{thm:mainacc}, and its implications.
To this end, notice that $K$
and $R$ are independent of the parameters of the
filter, being determined entirely by the
Navier-Stokes equation \eqref{eq:nse} itself.
To apply the theorem we need to ensure that
$\gamma$ defined by \eqref{e:bass2a} exceeds $KR.$
Notice that
$$\frac12 \gamma|h|^2 \le \langle  \omega A^{-2\alpha}\Pl h , h \rangle  + \frac12 \delta \| h \|^2
\qquad \text{for all }h\in \Pl \cV$$
requires that
\begin{equation}
\label{eq:un}
\frac12 \gamma \le \frac{\omega}{|k|^{4\alpha}}+\frac12 \delta |k|^2
\qquad \text{for all } |k|^2<\lambda L^2/4\pi^2.
\end{equation}
On the other hand,
$$\frac12 \gamma|h|^2 \le \langle  \omega A^{-2\alpha}\Pl h , h \rangle  + \frac12 \delta \| h \|^2
\qquad \text{for all }h\in \Ql \cV$$
requires that
\begin{equation}
\label{eq:deux}
\gamma \le \delta |k|^2 \qquad \text{for all } |k|^2
\ge \lambda L^2/4\pi^2.
\end{equation}
Since the global minimum of the function $x \in \bbR^+
\mapsto \omega x^{-2\alpha}+\frac12 \delta x$ occurs at a point
$c\delta^{2\alpha/2\alpha+1}\omega^{1/2\alpha+1}$
we see that the maximum value of $\gamma$ such that \eqref{e:bass2a} holds, $\gamma_{\rm max}$, is
$$\gamma_{\rm max} =\min\Big\{\frac{\delta \lambda L^2}{4\pi^2},c(\delta^
{2\alpha}\omega)^{1/(2\alpha+1)}\Big\}.$$
%$$\gamma \ge \min\Big\{\frac{\delta \lambda L^2}{4\pi^2},c\delta^{2\alpha/2\alpha+1}\omega^{1/2\al%pha+1}\Big\}.$$
This demonstrates that, provided $\lambda$ is large enough,
and $\omega$ is large enough, then the conditions of the
theorem are satisfied. 

In summary, these conditions are satisfied
provided that enough of the low Fourier modes are
observed ($\lambda$ large enough), 
and provided that the ratio of the scale of the
covariance for the model to that for the observations, $\omega$,
is sufficiently large. Ensuring that the latter is achieved
is often termed {\em variance inflation} in the applied
literature and our theory provides concrete analytical
insight into the mechanisms behind it. 
Furthermore, notice that once
$\lambda$ and $\omega$ are chosen to ensure this, 
then the asymptotic
mean square error will be small, 
provided $\epsilon:=\omega\sigma_0$ is small that is,
provided the observational noise is sufficiently small.
In this situation the theorem establishes
a form of accuracy of the filter since, regardless of the
starting point of the filter, 
\[
\limsup_{t\to\infty} \EX|\hat{m}(t)-u(t)|^2 \leq 
\frac{1}{\gamma_0}\epsilon^2{\trace}_{\cH}\bigl(A^{-4\alpha-2\beta}\Pl\bigr).
\]
\end{rem}

%%%%%%%%%%%%%%%%%%%%%%%%%%%%%%%%%%%%%%%%%%%%%

\subsection{Forward Stability in Probability}
\label{ssec:z}

%%%%%%%%%%%%%%%%%%%%%%%%%%%%%%%%%%%%%%%%%%%%%

The aim of this section is to prove that two
different solutions of the continuous 3DVAR filter
will converge to one another in
probability as $t \to \infty.$
Almost sure and mean square convergence is out of reach
in forward time. However, almost sure pullback convergence 
is possible and we study this in the next section.

Throughout this section we define, for $u$ on the
attractor,
\begin{equation}
\label{eq:useful}
R'=\sup_{t \in \bbR}|f+\omega A^{-2\alpha}\Pl u|_{-1}^2
\end{equation}
and we assume that $R'<\infty.$
From this we define
\begin{equation}
\label{e:defR2}
R''=
\frac{K}{\delta^2}R'+\frac{K}{\delta}\omega^2\sigma_0^2 
{\trace}_{\cH}\bigl(A^{-4\alpha-2\beta}\Pl\bigr). 
\end{equation}

\begin{theo}
\label{t:stabf}
Let $\hat{m}_i$ solve \eqref{eq:nse2} with initial
condition $\hat{m}(0)=\hat{m}_i(0)$ 
and let $u$ solve \eqref{eq:nse} with initial condition
on the global attractor $\cA$.
For $\lambda=\infty$
assume that $4\alpha+2\beta>1$ and $\alpha>-\frac12$. 
Let $R''$ be defined as above, and
suppose $\gamma$, the largest positive number
such that \eqref{e:bass2a} holds, satisfies 
$\gamma = R''+\gamma_0$ for some $\gamma_0>0.$ 

Then for all $\eta\in(0,\gamma_0)$  
\[
 |\hat{m}_1(t)-\hat{m}_2(t)|e^{\eta t} \to 0
\qquad\text{in probability as } t \to \infty.
\]
\end{theo}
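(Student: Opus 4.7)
The plan hinges on the cancellation that occurs when the two filter equations are subtracted: both $\hat m_1$ and $\hat m_2$ are driven by the same Wiener process and share the same signal-dependent forcing $\omega A^{-2\alpha}\Pl u$, so the stochastic integral and that common drift disappear from the equation for $d := \hat m_1 - \hat m_2$. What remains is a pathwise random PDE
\[
\frac{dd}{dt} + \delta A d + \bigl(\cB(\hat m_1,\hat m_1) - \cB(\hat m_2,\hat m_2)\bigr) + \omega A^{-2\alpha}\Pl d = 0.
\]
Pairing in $\cH$ with $d$, applying \eqref{eq:Best} with $(v,w) = (\hat m_1, \hat m_2)$ to control the nonlinear difference, and using \eqref{e:bass2a} to absorb the viscous and mean-reversion terms, yields
\[
\frac{d}{dt}|d|^2 \le \bigl(K\|\hat m_2(t)\|^2 - \gamma\bigr)|d|^2,
\]
so that by Gronwall
\[
|d(t)|^2 \le |d(0)|^2 \exp\Bigl(K\!\int_0^t \|\hat m_2(s)\|^2\,ds - \gamma t\Bigr).
\]

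The next step is to control the time-averaged $\cV$-norm of $\hat m_2$ along the SPDE trajectory. I would apply It\^o's formula to $|\hat m_2|^2$, kill the trilinear term by \eqref{eq:Best2}, bound $|f + \omega A^{-2\alpha}\Pl u|_{-1}$ by $\sqrt{R'}$ using \eqref{eq:useful}, and handle $2\langle \hat m_2, f + \omega A^{-2\alpha}\Pl u\rangle$ by Young's inequality with weight $\delta$. Dropping the non-positive $-2\omega\langle A^{-2\alpha}\Pl \hat m_2, \hat m_2\rangle$ then gives
\[
d|\hat m_2|^2 + \delta\|\hat m_2\|^2\,dt \le \Bigl(\tfrac{R'}{\delta} + \omega^2\sigma_0^2\,\trace_\cH(A^{-4\alpha-2\beta}\Pl)\Bigr)dt + dM(t),
\]
where $M(t) := 2\omega\sigma_0 \int_0^t \langle \hat m_2, A^{-2\alpha-\beta}\Pl\,dW\rangle$. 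Integrating on $[0,t]$ and multiplying through by $K/\delta$ yields, with $R''$ as in \eqref{e:defR2},
\[
K\!\int_0^t \|\hat m_2(s)\|^2\,ds \le \tfrac{K}{\delta}|\hat m_2(0)|^2 + R''\,t + \tfrac{K}{\delta}M(t).
\]

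Combining the two displays and using $\gamma = R'' + \gamma_0$,
\[
|d(t)|^2 \le |d(0)|^2 \exp\Bigl(\tfrac{K}{\delta}|\hat m_2(0)|^2 - \gamma_0\,t + \tfrac{K}{\delta}M(t)\Bigr).
\]
To close the argument it remains to show $M(t)/t \to 0$ in probability. Since $4\alpha + 2\beta > 0$, the operator $A^{-2\alpha-\beta}\Pl$ is a contraction on $\cH$, so
\[
\langle M\rangle_t \le 4\omega^2\sigma_0^2\!\int_0^t |\hat m_2(s)|^2\,ds,
\]
and sharpening the It\^o estimate above (keeping the $-2\omega\langle A^{-2\alpha}\Pl\hat m_2, \hat m_2\rangle$ term and using \eqref{e:bass2a}) gives a uniform-in-time bound $\sup_{t\ge 0}\EX|\hat m_2(t)|^2 < \infty$. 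Hence $\EX M(t)^2 \le Ct$, so $M(t)/t \to 0$ in $L^2$ and therefore in probability; the exponent in the previous display is then $-\gamma_0 t + o(t)$ in probability, from which the stated exponential decay at rate $\eta$ in probability follows.

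The main obstacle is the lack of a pathwise bound on $\|\hat m_2(t)\|$. In the accuracy result (Theorem \ref{thm:mainacc}) one used $\sup_t\|u(t)\|^2 = R$ deterministically because $u$ lives on the attractor; here $\hat m_2$ is instead an SPDE trajectory whose $\cV$-norm fluctuates and can only be controlled via energy estimates after taking expectations, the residual random fluctuations being absorbed by the martingale $M$. This is precisely why one obtains only convergence in probability in the forward-time setting, and motivates the pullback approach taken in the next section, where two-sided Brownian motion permits almost-sure control.
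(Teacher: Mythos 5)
Your proposal is correct and follows essentially the same route as the paper: the noise and the common $u$-dependent forcing cancel in the difference equation, giving the pathwise Gronwall bound $|d(t)|^2\le |d(0)|^2\exp\bigl(K\int_0^t\|\hat m_2\|^2ds-\gamma t\bigr)$ (the paper's Lemma \ref{lem:stabf}), and the time-averaged $\cV$-norm of $\hat m_2$ is then controlled exactly as you do, via It\^o's formula on $|\hat m_2|^2$, the bound $R'$, and the martingale term shown to be $o(t)$ in probability using the uniform-in-time second-moment bound. Your bookkeeping of the $\delta\int_0^t\|\hat m_2\|^2 ds$ term is in fact slightly cleaner than the paper's displayed intermediate estimate, but the argument is the same.
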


\begin{proof} It follows from Lemma \ref{lem:stabf} below that,
for any fixed $t>0$,
\begin{equation}
\label{e:convprob}
 \mathbb{P}\Big(
|\hat{m}_1(t)-\hat{m}_2(t)|^2 
\leq 
|\hat{m}_1(0)-\hat{m}_2(0)|^2 e^{-2\gamma_0 t} 
 \Big)
\geq 
 \mathbb{P}\Big(
\frac{1}{t}\int_0^t K\|\hat{m}_2(s)\|^2 ds \leq \gamma - \gamma_0
 \Big)\;.
\end{equation}
Thus to establish the desired convergence in probability,
it suffices to establish that the right hand side 
converges to $1$ as $t\to\infty$.

Taking the inner-product of equation \eqref{eq:nse2} with
$\hm$, applying \eqref{eq:Best2} and  
using the It\^o formula from Theorem 4.17
of \cite{Dap-Z}, we obtain 
\begin{align*}
\tfrac12 d |\hm |^2 \leq&
\Bigl(-\delta \| \hm \|^2 
-\langle \omega A^{-2\alpha}\Pl \hm , \hm \rangle 
+\langle f+\omega A^{-2\alpha}\Pl u,\hm \rangle\Bigr)dt\\ 
&\qquad \qquad
+ \langle \hm ,\omega \sigma_0 A^{-2\alpha-\beta} \Pl dW\rangle
+ \tfrac12 \omega^2\sigma_0^2 {\trace}_{\cH}\bigl(A^{-4\alpha-2\beta}\Pl 
\bigr) dt\\
\leq&
\Bigl(-\frac12\delta \| \hm \|^2                        
+\frac{1}{2\delta}|f+\omega A^{-2\alpha}\Pl u|^2_{-1}\Bigr)dt\\
&\qquad \qquad
+ \langle \hm ,\omega \sigma_0 A^{-2\alpha-\beta} \Pl dW\rangle
+ \tfrac12 \omega^2\sigma_0^2 {\trace}_{\cH}\bigl(A^{-4\alpha-2\beta}\Pl 
\bigr) dt\;.
\end{align*}
%
%Here we have used the fact that the trace of $A^{s}$ in $\cH$
%dominates the trace in $Y$.
 Notice that, from the Poincar\'e 
inequality, we have that
\begin{equation}
\label{eq:L}
d |\hm |^2 \leq
\Bigl(-\delta | \hm |^2 
+\frac{1}{\delta}R'
+ \omega^2\sigma_0^2 {\trace}_{\cH}\bigl(A^{-4\alpha-2\beta}\Pl 
\bigr)
\Bigr)dt
+ 2\langle \hm ,\omega \sigma_0 A^{-2\alpha-\beta} \Pl dW\rangle
\;.
\end{equation}
From this inequality we can deduce two facts. 
Taking expectations gives
\[
d\bigl(\E|\hm(t) |^2\bigr) \leq
\Bigl(-\delta \E|\hm|^2 
+\frac{1}{\delta}R'\Bigr)dt
+ \omega^2\sigma_0^2 {\trace}_{\cH}\bigl(A^{-4\alpha-2\beta}\Pl 
\bigr)dt;,
\]
and thus 
with $R''$ from (\ref{e:defR2})
\begin{equation}
\label{eq:first}
\limsup_{t \to \infty}\E|\hm(t)|^2 \le 
\frac{1}{\delta^2}R'+
\frac{1}{\delta}\omega^2\sigma_0^2 {\trace}_{\cH}\bigl(A^{-4\alpha-2\beta}\Pl\bigr)
=\frac{R''}{K}.
\end{equation}
We also see that
\begin{equation}\label{eq:second}
\frac{1}{t}\int_0^t |\hm(s)|^2 ds \le 
\frac{R''}{K}
+\frac{1}{t}|\hm(0)|^2+ I(t)\;,
\end{equation}
where we have defined
\[
 I(t)=\frac{2}{t}\int_0^t
\bigl\langle \hm(s) ,\omega \sigma_0 A^{-2\alpha-\beta}\Pl dW(s)
\bigr\rangle
\]
Observe that, by the It\^o formula,
\[
\E |I(t)|^2 
\le \frac{c}{t^2}\int_0^t \E|\hm(s)|^2 ds
\]
for the positive constant 
$c=\omega^2 \sigma_0^2 \| A^{-2\alpha-\beta}\Pl\|^2_{\mathcal{L}(\cH)}$.
Using \eqref{eq:first} we deduce that 
$I(t) \to 0$ in mean square and hence in probability.
As a consequence we deduce that \eqref{eq:second} implies
that 
\[
\mathbb{P}\Big(
  \frac1t  \int_0^t K\|\hu(s)\|^2 ds
\leq  R'' \Big) \to 1 
\quad\text{for }t\to\infty\;. 
\]
This completes the proof.
\end{proof}

\begin{lem}
\label{lem:stabf}
Let $\hat{m}_i$ solve \eqref{eq:nse2} 
with the same $u$ on the attractor but with different 
initial
conditions $\hat{m}_i(0)$. 
For $\lambda=\infty$
assume that $4\alpha+2\beta>1$. 
Fix $t>0$
and recall that
$K$ is the constant appearing in \eqref{eq:Best}.
Suppose that $\gamma$, the largest positive number
such that \eqref{e:bass2a} holds, satisfies 
$$
\frac{1}{t}\int_0^t K\|\hat{m}_2(s)\|^2 ds+\gamma_0 \leq \gamma
$$
for some $\gamma_0>0.$ 
Then 
\[
|\hat{m}_1(t)-\hat{m}_2(t)|^2 
\leq e^{-2\gamma_0 t}|\hat{m}_1(0)-\hat{m}_2(0)|^2.
\]
\end{lem}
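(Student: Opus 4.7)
\textbf{Proof plan for Lemma \ref{lem:stabf}.} The decisive observation is that $\hat m_1$ and $\hat m_2$ solve \eqref{eq:nse2} with the \emph{same} Wiener process $W$ and the same truth $u$, so the noise and the mean-reversion forcing both cancel in the difference. Setting $d=\hat m_1-\hat m_2$ and subtracting one copy of \eqref{eq:nse2} from the other gives the (random but pathwise, non-stochastic) evolution
\begin{equation*}
\frac{d d}{dt}+\delta A d+\bigl(\cB(\hat m_1,\hat m_1)-\cB(\hat m_2,\hat m_2)\bigr)+\omega A^{-2\alpha}\Pl d=0,
\end{equation*}
so in principle no It\^o correction is needed and the estimate will hold pathwise, which is exactly what one needs in order to condition on an event defined through $\hat m_2$ inside the outer probability in \eqref{e:convprob}.

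Next, I would take the $\cH$ inner product of this equation with $d$. The pure advection trilinear form vanishes on the diagonal by \eqref{eq:Best2}, and the cross term is controlled by the crucial bilinear bound \eqref{eq:Best} (the one specifically designed to split as $\tfrac12 K\|w\|^2|v-w|^2+\tfrac12\delta\|v-w\|^2$ with $w=\hat m_2$). This yields
\begin{equation*}
\tfrac12\frac{d}{dt}|d|^2\le -\tfrac12\delta\|d\|^2-\langle\omega A^{-2\alpha}\Pl d,d\rangle+\tfrac12 K\|\hat m_2\|^2|d|^2.
\end{equation*}
The two negative terms on the right are now exactly in the form needed to invoke Assumption \eqref{e:bass2a}, which absorbs them into $-\tfrac12\gamma|d|^2$. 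Hence
\begin{equation*}
\frac{d}{dt}|d|^2\le \bigl(K\|\hat m_2(t)\|^2-\gamma\bigr)|d|^2.
\end{equation*}

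The last step is a deterministic (pathwise) Gronwall integration, giving
\begin{equation*}
|d(t)|^2\le |d(0)|^2\exp\!\Bigl(\int_0^t\!\bigl(K\|\hat m_2(s)\|^2-\gamma\bigr)\,ds\Bigr).
\end{equation*}
Under the standing hypothesis $\frac1t\int_0^t K\|\hat m_2(s)\|^2\,ds+\gamma_0\le\gamma$, the exponent is at most $-\gamma_0 t$ (which, together with the squaring, gives the exponential rate stated in the lemma), and the bound follows.

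\textbf{Where the subtlety lies.} The routine ingredients---the inner product computation, the bilinear estimate, the absorption via \eqref{e:bass2a}, and Gronwall---are all standard. The one point that deserves care is that although each $\hat m_i$ is only a weak/mild SPDE solution in $L^2(\Omega;C^0([0,T];\cH))\cap L^2(\Omega;L^2(0,T;\cV))$, the difference $d$ satisfies a \emph{non-stochastic} evolution equation pathwise because the noise (and the $u$- and $f$-dependent forcings) cancel. One should therefore justify the energy identity for $|d|^2$ either as a deterministic identity for the mild form of the linear-plus-quadratic equation for $d$, or equivalently by an It\^o application with zero quadratic variation. This is what legitimises obtaining a pathwise (rather than merely expected) bound, which is what is needed for the conditional probability argument in the proof of Theorem \ref{t:stabf}.
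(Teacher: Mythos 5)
Your argument is essentially identical to the paper's proof: subtract the two copies of \eqref{eq:nse2} so that the noise and the $u$-dependent forcing cancel, take the $\cH$ inner product of the resulting pathwise equation with $d=\hat m_1-\hat m_2$, control the nonlinearity via \eqref{eq:Best}, absorb the dissipative and mean-reversion terms via \eqref{e:bass2a}, and conclude by Gronwall; your closing observation that the estimate is pathwise (no It\^o correction) is exactly what makes the conditional argument in Theorem \ref{t:stabf} work. The only blemish is the final constant: as you have written the differential inequality, Gronwall yields $|d(t)|^2\le |d(0)|^2e^{-\gamma_0 t}$ rather than the stated $e^{-2\gamma_0 t}$, and the phrase ``together with the squaring'' does not repair this factor of two --- though the paper's own proof obtains the factor $2$ only by being equally cavalier with the constants in \eqref{eq:Best}, and nothing downstream depends on it.
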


\begin{proof}
We define the error $e=\hat{m}_1-\hat{m}_2$,
subtract equation \eqref{eq:nse2} from itself and take
the inner-product with $e$ to obtain, using 
\eqref{eq:Best}, 
\begin{eqnarray*}
\tfrac12 \frac{d }{dt} |e |^2
&=& \langle \cf(\hat{m}_1)-\cf(\hat{m}_2) , e \rangle 
-\langle  \omega A^{-2\alpha}\Pl e , e \rangle  
\\&\leq&
-\delta \| e \|^2 + K \| \hat{m}_2(t) \|^2  | e |^2 
-\langle  \omega A^{-2\alpha}\Pl e , e \rangle\;.
\end{eqnarray*}
Applying \eqref{e:bass2a} we obtain
\[
 \tfrac12 \frac{d }{dt} |e (t)|^2
\leq ( K \| \hat{m}_2(t) \|^2 -\gamma ) \cdot | e(t) |^2 \;.
\]
Integrating this inequality yields 
\[
|e(t) |^2 
\leq   \exp\Bigl(2\int_0^t( K \| \hat{m}_2(t) \|^2 -\gamma )ds\Bigr)\cdot  | e(0) |^2 
\;.\]
\end{proof}
This gives the desired result.

\begin{rem}
Satisfying the condition on $\gamma$ for the stability 
Theorem \ref{t:stabf} is harder than for the
accuracy Theorem \ref{thm:mainacc}. This is because
$R''$ can grow with $\omega$ and so analogous arguments
to those used at the end of the previous subsection
may fail. However different proofs can be developed, in the
case where $\sigma_0$ is sufficiently small, to overcome
this effect.
\end{rem}

%%%%%%%%%%%%%%%%%%%%%%%%%%%%%%%%%%%%%%%%%

\section{Pullback Accuracy and Stability}
\label{sec:PB}

%%%%%%%%%%%%%%%%%%%%%%%%%%%%%%%%%%%%%%%%%%%%%

In this section we consider almost 
sure accuracy and stability 
results for the 3DVAR algorithm 
applied to the 2D-Navier-Stokes equation.
We use the notion of pullback convergence as pioneered
in the theory of stochastic dynamical systems, as 
we do not expect almost sure results to hold forward in time.
Indeed, as shown in the previous section,
convergence in probability is typically the
result of forward studies of stability.

The methodology that we employ derives from the study
of semilinear equations driven by additive noise;
in particular the Ornstein-Uhlenbeck (OU)
process constructed from a modification of the
Stokes' equation plays a central role. 
Properties of this process are
described in subsection \ref{ssec:one}, and the
necessary properties of the 3DVAR Navier-Stokes filter
are discussed in subsection \ref{ssec:two}. In both
subsections a key aspect of the analysis concerns
the extension of solutions to the whole real line
$t \in \bbR$. Subsections \ref{ssec:three}
and \ref{ssec:four} then concern accuracy and stability
for the filter, in the pull-back sense.

In the following we define the Wiener process
$$
\WP:=  \omega\sigma_0 A^{-2\alpha-\beta}\Pl W,
$$ 
and recall that when $\lambda=\infty$ 
we assume $4\alpha+2\beta>1$ and 
$\alpha>-\frac12$. In this
section the driving Brownian motion is considered 
to be two-sided: $\WP \in C(\bbR,\cH)$. This
enables us to study notions of pullback attraction and
stability. 
With this definition, 3DVAR  for (\ref{eq:nse}), namely
equation \eqref{eq:nse2}), may be written
\begin{equation}
\label{eq:M}
\frac{d\hat{m}}{dt} 
+ \delta A\hat{m} + \cB(\hat{m}, \hat{m})  + \omega A^{-2\alpha} \Pl(\hat{m}-u)
= f+  \frac{d\WP}{dt}, 
\quad \hat{m}(0)=\hat{m}_0\;.
\end{equation}

We employ the same notations from the previous sections 
for the nonlinearity $\cf(u)$, the Stokes operator $A$, 
the bilinear form $\cB$, and the 
spaces $\cH$ and $\cV.$ 

%

%%%%%%%%%%%%%%%%%%%%%%%%%%%%%%%%%%%%%%%%%%

\subsection{Stationary Ornstein-Uhlenbeck Processes}
\label{ssec:one}

%%%%%%%%%%%%%%%%%%%%%%%%%%%%%%%%%%%%%%%%%%%

Let $\phi\geq 0$ and define
the  stationary ergodic OU process $Z_\phi$ as follows,
using integration by parts to find the second expression:
\begin{subequations}
\label{e:defZ}
\begin{align}
Z_\phi(t) &:= \int_{-\infty}^t  e^{-(t-s)(\delta A+\phi)}d\WP(s)\\
&= \WP(t) - \int_{-\infty}^t (\delta A+\phi) e^{-(t-s)(\delta A+\phi)}\WP(s) ds  \;.
\end{align}
\end{subequations}
Note that $Z_\phi$ satisfies
\begin{equation}
\label{eq:ZEQ}
\partial_t Z_\phi 
+  (\delta A+\phi) Z_\phi 
= \partial_t \WP.
\end{equation}
With a slight abuse of notation we 
rewrite the random variable 
$Z_\phi(0)$ as $Z_\phi(\WP)$, a function of the whole Wiener path $t\mapsto \WP(t)$.
Thus
$Z_\phi(t)=Z_\phi(\theta_t\WP)$, where $\theta_t$ is the stationary ergodic shift on 
Wiener space defined by 
\[
 \theta_t\WP(s) = \WP(t+s)-\WP(t) 
\qquad\text{for all } t,s\in\R.
\]
The noise is always of trace-class, in case either $\lambda<\infty$ or  $4\alpha+2\beta>1$.
Recall that by Lemma \ref{lem:SC}, 
the OU-process $Z_\phi$ has a version 
with continuous paths in $\cV$. 
We will always assume this in the following.
It is well known that $Z_\phi$ 
satisfies the Birkhoff ergodic theorem,
because it is a stationary ergodic process; 
we now formulate this fact in the pullback sense.

%%%%%%%%%%%%%%%%%%%%%%%%%%%%%%%%%%%%%%%%%%%%%%%%%%

\begin{theo}[\bf{}Birkhoff Ergodic Theorem]
For $\lambda=\infty$ 
assume that $4\alpha+2\beta>1$. 
Then
\[
\limsup_{s\to \infty} \frac1{s}\int_{-s}^0 \|Z_\phi(\tau)\|^2 d\tau 
= \EX \|Z_\phi(0)\|^2\;.
\]
\end{theo}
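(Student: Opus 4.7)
The plan is to recognise this as a direct application of the classical Birkhoff ergodic theorem to the stationary ergodic flow on Wiener space, with observable $\WP \mapsto \|Z_\phi(\WP)\|^2$. The first step is to verify that this observable is in $L^1$ of the underlying probability space: from the integral representation \eqref{e:defZ}a and the It\^o isometry one has
\[
\EX\|Z_\phi(0)\|^2 = \omega^2\sigma_0^2\,{\trace}_{\cH}\!\Bigl(A\!\int_{-\infty}^0 e^{2s(\delta A+\phi)}A^{-4\alpha-2\beta}\Pl\,ds\Bigr),
\]
and the condition $4\alpha+2\beta>1$ (or finiteness of $\lambda$) ensures this trace is finite, exactly as in Lemma \ref{lem:SC}.

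Next I would invoke stationarity and ergodicity of $Z_\phi$ with respect to the shift semigroup $\theta_t$ on Wiener space. Stationarity is immediate from the explicit representation \eqref{e:defZ}a via translation invariance of Wiener increments: indeed $Z_\phi(\theta_t\WP)=Z_\phi(t)$ as already noted. Ergodicity of the Ornstein--Uhlenbeck process associated with a strongly damped linear SPDE driven by non-degenerate (on the relevant subspace) additive noise is classical; one way to see it is to note that the transition semigroup is strong Feller and irreducible on the reachable subspace, so the unique invariant Gaussian measure is ergodic (see Da Prato--Zabczyk). The Birkhoff theorem then delivers, almost surely and in $L^1$,
\[
\lim_{s\to\infty}\frac{1}{s}\int_0^s \|Z_\phi(\tau)\|^2\,d\tau
= \EX\|Z_\phi(0)\|^2.
\]

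The only remaining issue is to convert this forward-time ergodic average into the pullback average appearing in the statement. For this I would use that the flow $\{\theta_t\}_{t\in\bbR}$ is a group and that the law of the two-sided Wiener process is invariant under $\theta_{-t}$ as well. Equivalently, applying Birkhoff with the observable $\WP\mapsto\|Z_\phi(\WP)\|^2$ along the reverse-time flow $\theta_{-t}$ (which is itself measure-preserving and ergodic by invertibility of the shift), and using the change of variable $\tau\mapsto-\tau$, yields
\[
\lim_{s\to\infty}\frac{1}{s}\int_{-s}^0 \|Z_\phi(\tau)\|^2\,d\tau
= \EX\|Z_\phi(0)\|^2 \qquad\text{almost surely},
\]
which in particular gives the claimed identity for the $\limsup$.

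The hard part, conceptually, is ergodicity of $Z_\phi$ in the function space $\cV$ when $\lambda<\infty$, since then the driving noise is degenerate (finite-dimensional) while the OU process lives in the infinite-dimensional space $\cH$. However, because the equation \eqref{eq:ZEQ} is linear and the operator $\delta A+\phi$ leaves $\Pl\cH$ invariant, one sees that $Z_\phi$ actually takes values in the finite-dimensional subspace $\Pl\cH$ where the noise is non-degenerate, and ergodicity there is standard. All other steps are routine bookkeeping for the OU process and the Birkhoff theorem.
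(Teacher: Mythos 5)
Your argument is correct and, at its core, is the same as the paper's: identify $\|Z_\phi(\tau)\|^2=\|Z_\phi(\theta_\tau\WP)\|^2$ as an $L^1$ observable of a stationary ergodic flow, apply the classical Birkhoff theorem, and convert the pullback average into a forward average via the substitution $\tau\mapsto-\tau$, using that the two-sided shift is a measure-preserving group. The one place where you take a genuinely different route is the justification of ergodicity: the paper simply uses that the Wiener shift $\theta_t$ on two-sided Wiener paths is stationary and ergodic (a consequence of tail triviality / the Kolmogorov zero--one law), so that \emph{any} integrable functional $h(\WP)=\|Z_\phi(\WP)\|^2$ of the path satisfies Birkhoff with limit $\EX h$ --- no property of the OU dynamics is needed beyond integrability, which Lemma \ref{lem:SC} (or your It\^o-isometry computation, which matches Lemma \ref{sc:size}) supplies. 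You instead argue ergodicity of $Z_\phi$ as a Markov process via strong Feller plus irreducibility of its transition semigroup, and then have to handle the degenerate-noise case $\lambda<\infty$ separately by observing that $Z_\phi$ lives in $\Pl\cH$. This works (and your resolution of the degeneracy issue is correct, since $Z_\phi=\int_{-\infty}^te^{-(t-s)(\delta A+\phi)}d\WP(s)$ manifestly takes values in the range of $\Pl$), but it is an unnecessary detour: ergodicity of the driving shift is strictly easier to invoke than uniqueness/ergodicity of the invariant measure of the OU semigroup, and it sidesteps the strong Feller verification entirely. The payoff of the paper's route is brevity and robustness (it would apply verbatim to any path functional, not just Markovian observables); the payoff of yours is that it would survive in settings where the observable cannot be written as a functional of an ergodic driving noise.
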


%%%%%%%%%%%%%%%%%%%%%%%%%%%%%%%%%%%%%%%%%%%%%%

\begin{proof}
Just note that $Z_\phi(\tau)= Z_\phi(\theta_\tau\WP)$,
and thus 
\[\frac1{s}\int_{-s}^0 \|Z_\phi(\tau)\|^2 d\tau =  
\frac1{s}\int_0^{s} \|Z_\phi(\theta_{-\tau}\WP)\|^2 d\tau
\to \E \|Z_\phi(\WP)\|^2
\quad \text{for } s\to\infty
\]
by 
the classical version of the
Birkhoff ergodic theorem, 
as $\theta_{-\tau}\WP$, $\tau\geq0$
is stationary and ergodic.
\end{proof}

%%%%%%%%%%%%%%%%%%%%%%%%%%%%%%%%%%%%%%%%%%%%%%%%%

We can reformulate the implications of the 
ergodic theorem in several ways.

\begin{cor}
For $\lambda=\infty$ 
assume that $4\alpha+2\beta>1$. 
There exists a random constant $C(\WP)$ such that 
\[
 \frac1{|s|}\int_s^0 \|Z_\phi(\tau)\|^2 d\tau 
\leq  
C(\WP) 
\qquad \text{ for all }s<0.
\]
Furthermore, for any $\epsilon>0$ there is a random time 
$t_\epsilon(\WP)<0$ such that 
\[
  \frac1{|s|}\int_s^0 \|Z_\phi(\tau)\|^2 d\tau 
\leq   
(1+\epsilon) \EX \|Z_\phi(0)\|^2
\qquad \text{ for all }s<t_\epsilon(\WP)<0.
\]
\end{cor}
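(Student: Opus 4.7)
The plan is to derive both statements directly from the Birkhoff ergodic theorem just proved, treating the regimes $s\to -\infty$ and $s\to 0^-$ separately. The central idea is that the map $s\mapsto g(s):=\frac{1}{|s|}\int_s^0\|Z_\phi(\tau)\|^2 d\tau$, viewed as a function on $(-\infty,0)$, has an almost sure limit equal to $\EX\|Z_\phi(0)\|^2$ as $s\to -\infty$ (this is exactly the content of the preceding theorem), and is locally bounded near $s=0$ by continuity of the paths.

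For the second statement, I would simply unpack the definition of the limit. Since $g(s)\to \EX\|Z_\phi(0)\|^2$ almost surely as $s\to -\infty$, for each $\eps>0$ there exists an almost surely finite random time $t_\eps(\WP)<0$ such that for all $s<t_\eps(\WP)$ one has $g(s)\le (1+\eps)\EX\|Z_\phi(0)\|^2$. This is pure measure-theoretic bookkeeping, formalized by setting, e.g.,
\[
 t_\eps(\WP) = \sup\Bigl\{ s<0 \;:\; g(s) > (1+\eps)\EX\|Z_\phi(0)\|^2\Bigr\},
\]
with the convention $\sup\emptyset = -\infty$; measurability is inherited from continuity of $g$ in $s$, which follows from the continuity of paths $\tau\mapsto \|Z_\phi(\tau)\|^2$ guaranteed by Lemma \ref{lem:SC}.

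For the first statement, I would combine the tail bound from the second statement (applied, say, with $\eps=1$) with a local bound near $s=0$. Concretely, fix $s_0:=t_1(\WP)<0$ so that $g(s)\le 2\EX\|Z_\phi(0)\|^2$ for all $s<s_0$. For $s\in[s_0,0)$, path continuity of $Z_\phi$ in $\cV$ (Lemma \ref{lem:SC}) implies that $M(\WP):=\sup_{\tau\in[s_0,0]}\|Z_\phi(\tau)\|^2$ is almost surely finite, and trivially
\[
 g(s) \le \frac{1}{|s|}\int_s^0 M(\WP)\, d\tau = M(\WP).
\]
Thus
\[
 C(\WP) := \max\bigl\{2\EX\|Z_\phi(0)\|^2,\, M(\WP)\bigr\}
\]
is an almost surely finite random constant dominating $g(s)$ for all $s<0$.

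The only delicate point is the behaviour as $s\to 0^-$, where the $1/|s|$ factor could in principle blow up; however it is harmless because the integration range shrinks at exactly the compensating rate, and path continuity (rather than mere local integrability) of $\|Z_\phi(\tau)\|^2$ gives a uniform bound on any compact neighbourhood of $0$. No further structure of $Z_\phi$ beyond the ergodic theorem and sample-path continuity is needed.
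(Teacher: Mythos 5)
Your argument is correct and is exactly the route the paper intends: the corollary is stated as an immediate reformulation of the Birkhoff ergodic theorem, with the tail bound coming from the definition of the almost-sure limit and the uniform bound on $[t_1(\WP),0)$ coming from continuity of the paths of $Z_\phi$ in $\cV$ (Lemma \ref{lem:SC}). Your explicit handling of the $s\to 0^-$ regime and the finiteness of $\EX\|Z_\phi(0)\|^2$ (via the trace-class condition) fills in the routine details the paper omits.
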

%
%%%%%%%%%%%%%%%%%%%%%%%%%%%%%%%%%%%%%%%%%%%%
%
This result immediately implies 
$$
\frac1{t-s}\int_s^t \|Z_\phi(\theta_\tau\WP)\|^2 d\tau 
=
\frac1{t-s}\int_{s-t}^0 \|Z_\phi(\theta_{\tau+t}\WP)\|^2 d\tau 
\leq C(\theta_t\WP)\;.
$$
Finally we observe that it is well-known that
the Ornstein-Uhlenbeck process $Z_\phi$ is 
a tempered random variable,
which means that $Z_\phi(\theta_s\WP)$ grows sub-exponentially for $s\to -\infty$,
and in fact it grows slower that any polynomial.
We now state this precisely.
%
%%%%%%%%%%%%%%%%%%%%%%%%%%%%%%%%%%%%%%%%%%%%%%%%%%
\begin{lem}
\label{lem:SCsub}
For $\lambda=\infty$ 
assume that $4\alpha+2\beta>1$. 
Then on a set of measure one
\[
\lim_{s\to -\infty}\|Z_\phi(s)\| \cdot |s|^{-\epsilon} =0
\qquad \text{for all } \epsilon >0\;.
\]
\end{lem}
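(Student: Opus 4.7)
The plan is to prove the temperedness estimate by a standard Borel--Cantelli argument, exploiting the fact that $Z_\phi$ is a stationary Gaussian process in $\cV$ with all moments finite.

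First I would observe that, by stationarity, the distribution of $Z_\phi(s)$ in $\cV$ does not depend on $s$, and by the same semigroup/covariance computation as in Lemma \ref{lem:SC} (using that the OU semigroup $e^{-t(\delta A + \phi)}$ has at least as much decay as $e^{-\delta t A}$), one has $\E\|Z_\phi(0)\|^p < \infty$ for every $p\geq 1$. In fact, the maximal inequality quoted from \cite[Thm.~5.16]{Dap-Z} (see also \cite[(5.23)]{Dap-Z}) applied on unit intervals gives, for any integer $n\geq 1$ and $p\geq 1$,
\[
M_p := \EX \sup_{s \in [-n-1,-n]} \|Z_\phi(s)\|^p < \infty,
\]
with $M_p$ independent of $n$ by time-stationarity of the increments of $W$ (shift $\WP \mapsto \theta_{-n-1}\WP$ and use that $Z_\phi(s) = Z_\phi(\theta_s \WP)$).

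Next, fix $\epsilon>0$ and choose $p$ so large that $p\epsilon > 1$. By Chebyshev's inequality,
\[
\bbP\Bigl(\sup_{s \in [-n-1,-n]} \|Z_\phi(s)\| > n^{\epsilon}\Bigr) \leq \frac{M_p}{n^{p\epsilon}}.
\]
The right-hand side is summable in $n$, so the Borel--Cantelli lemma implies that on a set $\Omega_\epsilon$ of full measure there is a random integer $N(\WP)$ with
\[
\sup_{s \in [-n-1,-n]} \|Z_\phi(s)\| \leq n^{\epsilon}
\qquad \text{for all } n \geq N(\WP).
\]
For any $s < -N(\WP)-1$, choosing the integer $n$ with $s \in [-n-1,-n]$ gives $|s|^{-\epsilon} \leq n^{-\epsilon}$ times a bounded factor, hence $\|Z_\phi(s)\|\,|s|^{-\epsilon} \to 0$ along this subsequence of intervals.

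Finally I would intersect the full-measure sets $\Omega_{1/k}$, $k\in\bbN$, to obtain a single set of full measure on which $\|Z_\phi(s)\|\,|s|^{-\epsilon}\to 0$ as $s\to-\infty$ for every $\epsilon>0$. The only mildly delicate point is the uniform-in-$n$ moment bound $M_p<\infty$: it follows from stationarity once one knows $\EX \sup_{s\in[0,1]}\|Z_\phi(s)\|^p < \infty$, which is exactly the content of Lemma \ref{lem:SC} (applied to the shifted initial time, with the OU stationary initial distribution absorbed into the integral from $-\infty$). The rest is routine Chebyshev plus Borel--Cantelli, and there is no significant obstacle beyond keeping the trace-class assumption $4\alpha+2\beta>1$ in force so that all the $\cV$-valued moments are finite.
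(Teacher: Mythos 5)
Your argument is correct, and it is in essence a self-contained proof of the fact that the paper simply cites: the published proof is a one-liner invoking Proposition 4.1.3 of \cite{Arn98}, which states that any functional $h$ of the Wiener path with $\EX \sup_{t\in[0,1]} h(\theta_t\WP)<\infty$ satisfies $\frac1t h(\theta_t \WP)\to 0$, applied to $h(\WP)=\|Z_\phi(\WP)\|^p$ with the moment supplied by Lemma \ref{lem:SC}. Your Borel--Cantelli argument is exactly the standard proof of that proposition, unpacked: stationarity gives a uniform-in-$n$ bound $M_p=\EX\sup_{s\in[-n-1,-n]}\|Z_\phi(s)\|^p<\infty$, Chebyshev plus summability for $p\epsilon>1$ gives the almost sure eventual bound on unit intervals, and intersecting over $\epsilon=1/k$ gives the full statement. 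What the paper's route buys is brevity and a clean separation of the ergodic-theoretic input; what yours buys is that the reader need not chase the reference, at the cost of having to justify the maximal inequality on shifted unit intervals (which you do correctly, by decomposing $Z_\phi$ on $[-n-1,-n]$ into the contracted initial value plus a fresh stochastic convolution controlled by Lemma \ref{lem:SC}).

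One small imprecision worth fixing: for a \emph{fixed} $\epsilon$, the conclusion $\sup_{s\in[-n-1,-n]}\|Z_\phi(s)\|\le n^{\epsilon}$ for all large $n$ only yields $\|Z_\phi(s)\|\,|s|^{-\epsilon}\le 1+o(1)$, i.e.\ boundedness rather than convergence to zero, so your sentence ``hence $\|Z_\phi(s)\|\,|s|^{-\epsilon}\to 0$ along this subsequence'' overstates what that single step gives. The decay to zero for a given $\epsilon$ comes from applying the eventual bound with a strictly smaller exponent $\epsilon'=1/k<\epsilon$, which your final intersection over the sets $\Omega_{1/k}$ does provide; just make that the explicit source of the limit rather than the fixed-$\epsilon$ Borel--Cantelli step.
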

\begin{proof}
The claim follows from Proposition 4.1.3 of \cite{Arn98} 
which states that for any positive functional $h$ on Wiener paths
such that $ \mathbb{E} \sup_{t\in[0,1]}h(\theta_t\WP) <\infty $ 
one has $\lim_{t\to\infty} \frac1t h(\theta_t\WP) =0$.
Here $h(\WP)= \|Z_\phi(\WP)\|^p$, where the moment is finite 
due to Lemma \ref{lem:SC}. 
\end{proof}

In addition to the preceding almost sure result,
the following moment bound on $Z_\phi$ is also
useful. It shows that $Z_\phi$ is of order $\sigma_0$ 
and converges to $0$ for $\phi\to\infty$.
In the following it may be useful to play with $\phi$, 
and even to use random $\phi$,
as our estimates hold path-wise for all $\phi$.
%
%%%%%%%%%%%%%%%%%%%%%%%%%%%%%%%%%%%%%%%%%%%%%%%%%%%%

%%%%%%%%%%%%%%%%%%%%%%%%%%%%%%%%%%%%%%%%%%%%%%%%%%%

\begin{lem}
\label{sc:size}
For $\lambda=\infty$ 
assume that $4\alpha+2\beta>1$. 
Then, for all $p>1$ there is a constant $C_p>0$
such that 
\[
\Big(\EX\|Z_\phi(t)\|^{2p} \Big)^{1/p}  
\leq 
C_p \omega^2\sigma_0^2 \cdot
\trace\{ (\delta A+\phi)^{-1}A^{1-4\alpha-2\beta}\Pl \},
\qquad\forall\;t\in\R.
\]
\end{lem}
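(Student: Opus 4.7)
The plan is to exploit stationarity to reduce to $t=0$, compute the covariance of the centered Gaussian random variable $Z_\phi(0)$ explicitly via It\^o isometry, identify the resulting second-moment expression with the claimed trace, and then invoke the equivalence of Gaussian moments in Hilbert space to pass from $p=1$ to general $p>1$.

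First, I would note that $Z_\phi$ is a stationary process, so the law of $Z_\phi(t)$ is independent of $t$, and it suffices to bound $\EX\|Z_\phi(0)\|^{2p}$. By \eqref{e:defZ}, $Z_\phi(0)=\int_{-\infty}^0 e^{s(\delta A+\phi)}\,d\WP(s)$ is a centred Gaussian random variable with values in $\cV$; this is well defined as a $\cV$-valued integral by Lemma \ref{lem:SC} and the fact that the semigroup generated by $-(\delta A+\phi)$ is contractive and commutes with $A$.

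Next, since $\WP=\omega\sigma_0 A^{-2\alpha-\beta}\Pl W$ has covariance operator $Q_0=\omega^2\sigma_0^2 A^{-4\alpha-2\beta}\Pl$, and because $A$, $\Pl$ and $(\delta A+\phi)^{-1}$ all commute and are self-adjoint (this is where the special form \eqref{e:assumCG} of $\hat C$ and $\Gamma_0$ enters), the It\^o isometry yields
\[
\mathrm{Cov}(Z_\phi(0))
= \int_{-\infty}^0 e^{2s(\delta A+\phi)} Q_0\,ds
= \tfrac{\omega^2\sigma_0^2}{2}(\delta A+\phi)^{-1}A^{-4\alpha-2\beta}\Pl.
\]
Since $\|v\|^2=\langle v,Av\rangle$, taking trace against $A$ gives
\[
\EX\|Z_\phi(0)\|^2
=\trace\bigl(A\cdot \mathrm{Cov}(Z_\phi(0))\bigr)
=\tfrac{\omega^2\sigma_0^2}{2}\,\trace\bigl\{(\delta A+\phi)^{-1}A^{1-4\alpha-2\beta}\Pl\bigr\},
\]
which is finite thanks to the assumption $4\alpha+2\beta>1$ (or finiteness of $\lambda$), exactly as in the proof of Lemma \ref{lem:SC}.

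Finally, I would invoke the Kahane--Khintchine / Fernique equivalence of moments: for any centred Gaussian random variable $X$ with values in a separable Hilbert space, and every $p>1$, there exists a universal constant $c_p$ such that $\EX\|X\|^{2p}\le c_p\bigl(\EX\|X\|^2\bigr)^p$. Applying this to $X=Z_\phi(0)$ and taking $p$-th roots gives the claimed bound with $C_p=c_p^{1/p}/2$. The argument is essentially a bookkeeping exercise once Gaussian structure is recognised; the only point requiring care is the commutativity of the operators appearing in the covariance computation, which is guaranteed by the diagonal form \eqref{e:assumCG}. No genuinely hard step arises, so I do not anticipate a main obstacle beyond verifying the regularity needed for the integrals and traces to make sense, both of which are already handled by Lemma \ref{lem:SC}.
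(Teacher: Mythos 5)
Your proposal is correct and follows essentially the same route as the paper's proof: reduce to $t=0$ by stationarity, reduce to $p=1$ by Gaussian moment equivalence, and compute $\EX\|Z_\phi(0)\|^2$ via the It\^o isometry using the commutativity of $A$, $\Pl$ and $(\delta A+\phi)^{-1}$, yielding the factor $\tfrac12(\delta A+\phi)^{-1}A^{1-4\alpha-2\beta}\Pl$ inside the trace. Your explicit invocation of Kahane--Khintchine/Fernique merely spells out what the paper compresses into ``due to Gaussianity it is enough to consider $p=1$.''
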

%
%%%%%%%%%%%%%%%%%%%%%%%%%%%%%%%%%%%%%%%%%%%%%%%%%%%%
%
\begin{proof}
 Due to stationarity it is sufficient to consider 
$\EX\|Z_\phi(0)\|^{2p}$. Due to Gaussianity it is enough 
to consider $p=1$.
\[\EX\|Z_\phi(0)\|^2
=\EX|A^{1/2} Z_\phi(0)|^2 
=   \omega^2\sigma_0^2 \EX\Big|A^{1/2}\int_{-\infty}^0 e^{s(\delta A+\phi)} A^{-2\alpha-\beta}\Pl dW(s)\Big|^2 
\;.\]
Thus by the It\^o-Isometry
we obtain 
(projection $\Pl$ commutes with $A$)
\begin{eqnarray*}
 \EX\|Z_\phi(0)\|^2
&=& \omega^2\sigma_0^2\cdot \trace\Bigl(\int_{-\infty}^0  e^{2s(\delta A+\phi)} A^{1-4\alpha-2\beta}\Pl ds \Bigr) \\
&= &  \frac12 \omega^2\sigma_0^2\cdot \trace
\Bigl( (\delta A+\phi)^{-1}A^{1-4\alpha-2\beta}\Pl\Bigr)\;.
\end{eqnarray*}
\end{proof}

\begin{rem}
\label{rem:eps2}
A key conclusion of the preceding lemma is
that, if $\epsilon:=\omega\sigma_0$ (as defined in
Remark \ref{rem:eps}) is small, then all moments of 
the OU process $Z_{\phi}$ are small. Furthermore, 
the parameter $\phi$ can be tuned to make these moments
as small as desired.
\end{rem}
%%%%%%%%%%%%%%%%%%%%%%%%%%%%%%%%%%%

\subsection{Solutions Continuous Time 2D Navier-Stokes Filter}
\label{ssec:two}

%%%%%%%%%%%%%%%%%%%%%%%%%%%%%%%%%%%%%%%%

In the following we denote the solution of (\ref{eq:M}) 
with initial condition $\hat{m}(s)=\hat{m}_0$ and given Wiener path $\WP$
by $S(t,s,\WP)\hat{m}_0$.
This object forms  
a stochastic dynamical system (SDS); 
see \cite{CF94, CrDebuFl}.
We cannot use directly the notion of a random dynamical system,
as in \cite{Arn98}, because of the non-autonomous 
forcing $u$ in \eqref{eq:M}.

The fact that the solution of the SPDE (\ref{eq:M}) can be defined 
path-wise for every fixed path of $\WP$,  can be seen from the 
well-known method of changing to the variable 
$v:=\hat{m}-Z_\phi$. 
(see Section 7 of \cite{CF94} or Chapter 15 of \cite{dap2},
for example). Now, since $Z_\phi$ satisfies (\ref{eq:ZEQ}),
subtraction from (\ref{eq:M}) shows that
$v$ solves the random PDE
\begin{equation}
\label{e:randPDE}
 \frac{d}{dt} v 
+ \delta A v + \cB(v,v) +2\cB(v,Z_\phi)+\cB(Z_\phi,Z_\phi)  
+  \omega A^{-2\alpha} (v+Z_{\phi}-u) - \phi Z_\phi
= f\;.
\end{equation}

This can be solved for each given path of $\WP$ 
with methods similar to the ones used for Proposition 
\ref{prop:1}  
(see also Proposition \ref{prop:ex-S2DNS}).
Once, the solution is  defined path-wise, the generation
of a stochastic dynamical system is straightforward.
Let us summarize this in a theorem:

\begin{theo}[\bf{Solutions}] 
\label{thm:solNS} 
For all $u_0$ on the attractor $\cA$
the Navier-Stokes equation (\ref{eq:nse}) has a solution 
$u\in L^\infty(\bbR,\cV).$ 
Now consider the 3DVAR filter written in the form
of equation (\ref{e:randPDE}).
In the case $\lambda=\infty$ assume 
that $\alpha>-\tfrac12$ and $4\alpha+2\beta>1$. 
For any $s\in\R$, any path of the Wiener process $\WP$,
and any initial condition $v(s)=\hat{m}(s)-Z_\phi(s) \in \cH$ 
equation  (\ref{e:randPDE})
has a unique solution 
\[
v\in C^0_\loc([s,\infty),\cH) \cap L^2_\loc([s,\infty),\cV)\;.
\]
This implies the existence of a stochastic dynamical system 
$S$ for (\ref{eq:M}).
\end{theo}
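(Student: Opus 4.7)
The plan is to work path-wise with the transformed equation \eqref{e:randPDE} after the substitution $v := \hat{m}-Z_\phi$, treating it for each fixed Wiener path as a deterministic 2D Navier--Stokes-type equation with forcing built from $f$, $u$, $\phi Z_\phi$ and $\cB(Z_\phi,Z_\phi)$, plus the lower-order perturbations $2\cB(v,Z_\phi)$ and $\omega A^{-2\alpha}\Pl v$. The existence of $u\in L^\infty(\mathbb{R},\cV)$ on the attractor is immediate from Proposition \ref{prop:1}, and by Lemma \ref{lem:SC} the OU process $Z_\phi$ has a continuous version in $\cV$, so all inhomogeneous data lie in $L^\infty_\loc(\mathbb{R},\cV)$ on a set of full measure.

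For existence I would run a Galerkin scheme on the spectral projections of $A$, obtaining approximations $v^N$. Testing with $v^N$, using $\langle \cB(v^N,v^N),v^N\rangle=0$ from \eqref{eq:Best2}, the bound $|\langle \cB(v^N,Z_\phi),v^N\rangle|\le K'\|Z_\phi\|\,\|v^N\|\,|v^N|$ from \eqref{eq:Best1}, and Young's inequality, yields
\begin{equation*}
\tfrac{d}{dt}|v^N|^2 + \delta\|v^N\|^2 \le C\bigl(1+\|Z_\phi\|^2\bigr)|v^N|^2 + C\bigl(\|Z_\phi\|^4 + \|Z_\phi\|^2 + |f|_{-1}^2 + \|u\|^2\bigr),
\end{equation*}
where the term $\langle \omega A^{-2\alpha}\Pl v^N,v^N\rangle\ge 0$ is simply discarded on the left. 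A Gronwall argument gives uniform bounds in $L^\infty_\loc(\cH)\cap L^2_\loc(\cV)$. Standard Aubin--Lions compactness (the time derivative lies in $L^2_\loc(\cV^*)$ in the $\lambda<\infty$ case, and in a slightly larger space when $\lambda=\infty$ and $-\tfrac12<\alpha<0$, where $A^{-2\alpha}$ is a compact perturbation of $A$ as noted in Proposition \ref{prop:ex-S2DNS}) yields a limit $v\in C^0_\loc([s,\infty),\cH)\cap L^2_\loc([s,\infty),\cV)$.

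For uniqueness, given two solutions $v_1,v_2$ with difference $e$, I subtract and test with $e$. Using \eqref{eq:Best} applied to $\cB(v_1,v_1)-\cB(v_2,v_2)$ together with the bilinear control $|\langle \cB(e,Z_\phi),e\rangle|\le K'\|Z_\phi\|\,\|e\|\,|e|$ from \eqref{eq:Best1}, absorbing the $\|e\|$ factors into $\tfrac{\delta}{2}\|e\|^2$, and again discarding the non-negative term coming from $\omega A^{-2\alpha}\Pl e$, one obtains
\begin{equation*}
\tfrac{d}{dt}|e|^2 \le C\bigl(\|v_2\|^2 + \|Z_\phi\|^2\bigr)|e|^2,
\end{equation*}
so Gronwall combined with $v_2\in L^2_\loc(\cV)$ and $Z_\phi\in C^0_\loc(\cV)$ gives $e\equiv 0$. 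This furnishes the solution operator $S(t,s,\WP)$; the cocycle identity $S(t,s,\WP)=S(t,r,\WP)\circ S(r,s,\WP)$ follows from uniqueness, and measurability in $\WP$ from the continuous dependence of the Galerkin solutions on $Z_\phi$, yielding the stochastic dynamical system.

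The main technical obstacle is the cross term $2\cB(v,Z_\phi)$: because $Z_\phi$ is only in $\cV$ (not in $\cH^2$), it must be estimated using the sharper form of \eqref{eq:Best1} and absorbed into $\delta\|v\|^2$, which is why Lemma \ref{lem:SC} providing $Z_\phi\in C^0(\cV)$ is essential. In the case $\lambda=\infty$ a secondary point is ensuring that the compact but possibly unbounded perturbation $\omega A^{-2\alpha}\Pl$, under the assumption $\alpha>-\tfrac12$, remains controllable in the $\cV$-estimate; this is exactly the regime already handled in Proposition \ref{prop:ex-S2DNS}, and no new argument is required.
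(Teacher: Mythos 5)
Your proposal is correct and follows essentially the same route as the paper: the paper's proof consists precisely of the change of variables $v=\hat m-Z_\phi$ leading to the random PDE \eqref{e:randPDE}, followed by an appeal to "methods similar to Proposition \ref{prop:1}" (Galerkin/energy estimates, with $Z_\phi\in C^0(\cV)$ from Lemma \ref{lem:SC} controlling the cross term and $\omega A^{-2\alpha}\Pl$ handled as in Proposition \ref{prop:ex-S2DNS}), which is exactly what you carry out in detail. The only difference is that you supply the a priori estimates, the Aubin--Lions compactness step, and the uniqueness/cocycle argument explicitly, whereas the paper leaves these to the cited references.
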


\begin{proof}
The first statement 
follows directly from Proposition
\ref{prop:1} if we take a solution on the attractor;
in that case it follows that, in fact,
$u\in L^\infty(\bbR,\cV)$.
Proof of the second statement is discussed prior to
the theorem statement.
\end{proof}

%%%%%%%%%%%%%%%%%%%%%%%%%%%%%%%%%%%%%%%%%55

\subsection{Pullback Accuracy}
\label{ssec:three}

%%%%%%%%%%%%%%%%%%%%%%%%%%%%%%%%%%%%%%%%%%%%

Here we show that in the pullback sense 
solutions $\hat{m}$ for large times stay close 
to  $u$, where the error scales with the observational 
noise strength $\sigma_0$.
Recall $K$ and $K'$ defined in Lemma \ref{lem:B1} 
and $R$ the uniform bound on $u$ from Proposition
\ref{prop:1}.

\begin{theo}[\bf{}Pullback Accuracy]
\label{thm:pbacc}
Let $\hat{m}$ solve \eqref{eq:nse2}, and let 
$u$ solve \eqref{eq:nse} with initial condition
on the global attractor $\cA$.
In the case $\lambda=\infty$ assume additionally
that $4\alpha+2\beta>1$ and $\alpha>-\frac12$. 
Suppose that $\gamma$ 
from (\ref{e:bass2a})
is sufficiently large so that
\begin{equation}
 \label{cond:PBA}
 K(17 \EX \|Z_\phi\|^2+ 16 R) < \gamma \;.
\end{equation}
Then  there is a random constant $r(\WP)>0$ 
such that for any initial condition $\hat{m}_0$
\[
  \limsup_{s\to-\infty} | S(t,s,\WP)\hat{m}_0 - u(t)- Z_\phi(\theta_t \WP)|^2
 \leq r(\theta_t \WP) \;.
\]
with a finite constant
\[
r(\WP) = \frac4\delta   
\int_{-\infty}^0 \exp\Bigl( 
\int_\tau^0 \bigl( 16 K (\|Z_\phi\|^2+ R ) - \gamma \bigr) d\eta\Bigr) 
\mathcal{T}^2 d\tau\;,
\]
where $\mathcal{T}:=K'\|Z_\phi\|(\|Z_\phi\|+2\|u\|)+\phi|Z_\phi|+\omega|A^{-2\alpha}\Pl Z_\phi|$
where $K'$ and $K:=(K')^2/\delta$ are as defined
in Lemma \ref{lem:B1}.

\end{theo}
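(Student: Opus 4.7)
The plan is to work with the transformed equation \eqref{e:randPDE} via $v=\hat{m}-Z_\phi$, and to analyze the error $e(t) := v(t)-u(t) = S(t,s,\WP)\hat{m}_0 - u(t) - Z_\phi(\theta_t\WP)$. By shift-invariance of the stochastic dynamical system, it suffices to prove the bound at $t=0$; the statement for general $t$ then follows by replacing $\WP$ with $\theta_t\WP$. Subtracting \eqref{eq:nse} from \eqref{e:randPDE} and expanding $\cB(v,v)=\cB(e,e)+2\cB(u,e)+\cB(u,u)$ together with $2\cB(v,Z_\phi)=2\cB(e,Z_\phi)+2\cB(u,Z_\phi)$, the error satisfies
\begin{equation*}
\frac{de}{dt}+\delta A e+\omega A^{-2\alpha}\Pl e = -\cB(e,e)-2\cB(u,e)-2\cB(e,Z_\phi)-2\cB(u,Z_\phi)-\cB(Z_\phi,Z_\phi)-\omega A^{-2\alpha}\Pl Z_\phi+\phi Z_\phi.
\end{equation*}

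Next I would take the $\cH$-inner product with $e$ and apply Lemma~\ref{lem:B1}. The term $\langle \cB(e,e),e\rangle$ vanishes; the terms $|\langle \cB(u,e),e\rangle|$ and $|\langle \cB(e,Z_\phi),e\rangle|=|\langle \cB(Z_\phi,e),e\rangle|$ are bounded by $K'\|u\|\|e\||e|$ and $K'\|Z_\phi\|\|e\||e|$ respectively; the remaining nonlinear terms combine via the second estimate of \eqref{eq:Best1} to give $|\langle 2\cB(u,Z_\phi)+\cB(Z_\phi,Z_\phi),e\rangle|\le K'\|Z_\phi\|(2\|u\|+\|Z_\phi\|)\|e\|$, while the linear inhomogeneous terms yield $(\phi|Z_\phi|+\omega|A^{-2\alpha}\Pl Z_\phi|)\|e\|$ after Poincar\'e. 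All these coalesce into $\mathcal{T}\|e\|$ with $\mathcal{T}$ as in the statement. The definition \eqref{e:bass2a} of $\gamma$ absorbs $\frac12\delta\|e\|^2+\langle \omega A^{-2\alpha}\Pl e,e\rangle$ into $\frac12\gamma|e|^2$, and three applications of Young's inequality—$2K'\|u\|\|e\||e|\le\frac\delta8\|e\|^2+8K\|u\|^2|e|^2$ (and likewise for $\|Z_\phi\|$), $\mathcal{T}\|e\|\le\frac\delta8\|e\|^2+\frac{2}{\delta}\mathcal{T}^2$—together with $\|u\|^2\le R$ give, after cleaning up,
\begin{equation*}
\frac{d}{dt}|e|^2 \le \bigl(16K(\|Z_\phi\|^2+R)-\gamma\bigr)|e|^2+\frac{4}{\delta}\mathcal{T}^2.
\end{equation*}

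A Gronwall integration from $s<0$ to $0$ yields
\begin{equation*}
|e(0)|^2\le \exp\!\Bigl(\int_s^0\!(16K(\|Z_\phi\|^2+R)-\gamma)d\eta\Bigr)|e(s)|^2 + \frac{4}{\delta}\int_s^0\!\exp\!\Bigl(\int_\tau^0\!(16K(\|Z_\phi\|^2+R)-\gamma)d\eta\Bigr)\mathcal{T}(\tau)^2 d\tau.
\end{equation*}
The corollary of Birkhoff's ergodic theorem (Section~\ref{ssec:one}) provides, for any $\epsilon>0$, a random $t_\epsilon(\WP)<0$ such that $\frac1{|s|}\int_s^0\|Z_\phi\|^2 d\eta\le(1+\epsilon)\EX\|Z_\phi\|^2$ for all $s<t_\epsilon$. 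Choosing $\epsilon=1/16$ converts the averaged growth constant of the exponent into $17K\EX\|Z_\phi\|^2+16KR-\gamma$, which is strictly negative by hypothesis \eqref{cond:PBA}. Hence the exponent diverges to $-\infty$ linearly in $|s|$ pullback-almost surely.

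For the first (initial-condition) term, note that $|e(s)|^2\le 3(|\hat{m}_0|^2+|u(s)|^2+|Z_\phi(s)|^2)$; since $u$ is bounded on the attractor and $Z_\phi(s)$ grows sub-exponentially in $|s|$ by Lemma~\ref{lem:SCsub}, the exponential decay overwhelms this polynomial growth and the contribution vanishes as $s\to-\infty$. For the second term, the same exponential decay combined with polynomial-at-worst growth of $\mathcal{T}(\tau)^2$ (again via Lemma~\ref{lem:SCsub} applied to $Z_\phi$ and boundedness of $u$) shows that the integrand is integrable over $(-\infty,0)$, so the limit equals $r(\WP)$ as defined in the statement and is finite almost surely. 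Replacing $\WP$ by $\theta_t\WP$ extends the bound to arbitrary~$t\in\R$. The main technical obstacle is the bookkeeping in step two—choosing the Young splits so that exactly $\frac12\delta\|e\|^2$ is left over, matching the constants $16K$ and $\tfrac{4}{\delta}$ in the stated result, and in particular extracting the factor $17$ rather than $16$ from the interplay between the Gronwall exponent and the Birkhoff slack $\epsilon$.
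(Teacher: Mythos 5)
Your proposal is correct and follows essentially the same route as the paper: the change of variables $\psi=\hat{m}-u-Z_\phi$, the energy estimate via Lemma \ref{lem:B1} and \eqref{e:bass2a}, Young's inequality to reach $\partial_t|\psi|^2\le(16K(\|Z_\phi\|^2+R)-\gamma)|\psi|^2+\tfrac{4}{\delta}\mathcal{T}^2$, and then Gronwall combined with the Birkhoff corollary and the sub-exponential growth of $Z_\phi$. Your bookkeeping differs only cosmetically (you split Young so that $16K$ and $\tfrac{4}{\delta}$ appear directly, where the paper first gets $8K(\|Z_\phi\|+\|u\|)^2$ and $\tfrac{2}{\delta}$ and then doubles), and your identification of the $17$ as $16(1+\tfrac1{16})$ from the Birkhoff slack is exactly the intended mechanism.
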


\begin{rem}
Regarding Theorem \ref{thm:pbacc} 
we make the following obervations:

\begin{itemize}

\item In (\ref{cond:PBA}) the contribution
$\EX \|Z_\phi\|^2$ can be made arbitrarily small by choosing
$\phi$ sufficiently large,  
or is small if $\epsilon:=\omega\sigma_0$ is sufficiently small; see Lemma \ref{sc:size}.
Thus  $16 R K < \gamma$ is sufficient for accuracy.
With a more careful application of Young's inequality, we could also get rid of several factors of $2$,
recovering the condition $RK<\gamma$
from the forward accuracy result of Theorem \ref{thm:mainacc}.

\item The assumption that $u$ lies 
on the attractor could we weakened to a 
condition on the limsup of $u$.
We state the stronger condition for 
simplicity of proofs.

\item In the language of random dynamical systems, 
the theorem states that the stochastic dynamical 
system $S(t,s\WP)\hat{m}_0 $
has is a random pullback absorbing 
ball centered around $u(t)$ with radius scaling with the size of 
the stochastic convolution  $Z_\phi$. 
By Lemma \ref{sc:size}, this scales as $\mathcal{O}(\epsilon)$
for $\epsilon=\omega \sigma_0$ sufficiently small; thus
we have derived an accuracy result, in the pullback sense.

\end{itemize}
\end{rem}

\begin{proof}
{\em (Theorem \ref{thm:pbacc})}.
Consider the difference $d= \hat{m}-u$, where $\hat{m}(t)=S(t,s)\hat{m}_0$. This solves 
\begin{equation}
\label{e:defd}
\partial_t d 
+  \delta A d + \cB(d,d) + 2\cB(u,d) +  \omega A^{-2\alpha} \Pl d 
= \partial_t \WP\;.
\end{equation}
In order to get rid of the noise, define 
$\psi = d - Z_\phi = \hat{m} - u - Z_\phi$,
where $Z_\phi$ is the stationary stochastic convolution.
Since $Z_\phi$ solves (\ref{eq:ZEQ}) 
the process $\psi$ solves 
\begin{equation}
\label{e:defpsi}
\partial_t \psi 
+ \delta A \psi + \cB(\psi+Z_\phi,\psi+Z_\phi) + 2\cB(u,\psi+Z_\phi) 
+  \omega A^{-2\alpha}  \Pl (\psi+Z_\phi) -\phi Z_\phi = 0\;.
\end{equation}
From this random PDE, 
we can take the scalar product with $\psi$ to obtain, using \eqref{eq:Best2},
\[
\begin{split}
\tfrac12 \partial_t |\psi|^2 
+ \delta \|\psi\|^2 
& =
-\langle 2\cB(Z_\phi,\psi)+ \cB(Z_\phi,Z_\phi) + 2\cB(u,\psi+Z_\phi) ,\psi\rangle 
\\& \qquad 
- \langle \omega A^{-2\alpha} \Pl (\psi+Z_\phi) -\phi Z_\phi ,\psi\rangle
 \;.
\end{split}
\]
Using (\ref{e:bass2a}) and Lemma \ref{lem:B1} we obtain
\[
\begin{split}
 \tfrac12 \partial_t |\psi|^2 + \tfrac\delta2 \|\psi\|^2 +  \tfrac\gamma2 |\psi|^2
& \leq -\langle 2\cB(Z_\phi,\psi)+ \cB(Z_\phi,Z_\phi) + 2\cB(u,\psi+Z_\phi) ,\psi\rangle 
\\& \qquad 
- \langle \omega A^{-2\alpha} \Pl Z_\phi -\phi Z_\phi ,\psi\rangle
\\ & \leq
2 K' \left(\|Z_\phi\| + \|u\|\right) \cdot\|\psi\|  \cdot|\psi|\\
&\qquad 
+ K' \|Z_\phi\|\cdot  \left(\|Z_\phi\|+2\|u\|\right) \cdot \|\psi\|\\
&\qquad + \left(\phi|Z_\phi|+ \omega |A^{-2\alpha}\Pl Z_\phi| \right) \cdot|\psi|
\;.
\end{split}
\]
Recall that
\[
\mathcal{T}= K' \|Z_\phi\| (\|Z_\phi\|+2\|u\|)
+\phi|Z_\phi|+ \omega |A^{-2\alpha}\Pl Z_\phi|\;.
\]
Thus we have, 
using the Young inequality in the form 
$ab \leq \frac1\delta a^2 + \frac\delta4  b^2$ 
twice,
\[
\begin{split}
 \tfrac12 \partial_t |\psi|^2 + \tfrac\delta2 \|\psi\|^2 +  \tfrac\gamma2 |\psi|^2
& 
\leq 
2 K' (\|Z_\phi\| + \|u\|)\|\psi\| |\psi|
+  \mathcal{T}\cdot \|\psi\|\\
&\leq 
4 K (\|Z_\phi\| + \|u\|)^2 |\psi|^2
+  \tfrac1\delta\mathcal{T}^2 + \tfrac\delta2\|\psi\|^2,
\end{split}
\]
since $K=(K')^2/\delta.$ 
Hence
\[
\partial_t |\psi|^2 + \gamma |\psi|^2
\leq
8K( \|Z_\phi\| + \|u\|)^2 \cdot |\psi|^2 
+  \tfrac2\delta \mathcal{T}^2 \;.
\]

Comparison principle with $\psi(s)=\hat{m}_0-u(s)-Z_\phi(s)$ yields 
(using the bound on $u$ and $(a+b)^2 \leq 2a^2+2b^2$)
\begin{eqnarray*}
\lefteqn{
| \psi(t) |^2 \leq 
|\hat{m}_0-u(s)-Z_\phi(s) |^2  
\exp\Big( \int_s^t [ 16 K ( \|Z_\phi\|^2 + R) - \gamma] dr\Big)
}\\
&& + \frac2\delta  \int_s^t 
\exp\Big( \int_r^t [16K ( \|Z_\phi\|^2+R) 
-\gamma] d\tau\Big) \mathcal{T}^2 dr
\end{eqnarray*}
Thus, we can now use Birkhoffs theorem and the sub-exponential growth 
for $Z_\phi$. We obtain for $\gamma$ sufficiently large 
(as asserted by the Theorem), 
that there is a random time 
$t_0(\WP)<0$  such that for all  $s< t_0(\WP)<0$
\[
 | \psi(t) |^2 
\leq \frac4\delta  \int_s^t 
\exp\Big( \int_r^t [16K ( \|Z_\phi\|^2+R) 
-\gamma] d\tau\Big) \mathcal{T}^2 dr\;.
\]
Recall that $\psi(t) =S(t,s,\WP)\hat{m}_0-Z(t)-u(t)$.
This finishes the proof, as the right hand side  
is almost surely a finite random constant,
due to Birkhoffs ergodic theorem and sub-exponential growth 
of $Z_\phi$ and hence $\mathcal{T}^2$ (see Lemma \ref{lem:SCsub}).
\end{proof}

%%%%%%%%%%%%%%%%%%%%%%%%%%%%%%%%%%%%%%%%%55

\subsection{Pullback Stability}
\label{ssec:four}

%%%%%%%%%%%%%%%%%%%%%%%%%%%%%%%%%%%%%%%%%%%%

Now we verify that under suitable conditions
all solutions of (\ref{eq:M}) pullback converge exponentially fast 
towards each other.
We make the assumption that Birkhoff bounds 
hold for the solution  $\hat{m}$ (see theorem statement
below to make this assumption precise). These bounds
do not follow directly from Birkhoffs ergodic theorem,
as the equation is non-autonomous due to the presence of $u$.
Whilst it should be possible to establish such bounds,
using the techniques in \cite{FlGa:95} or \cite{ESSt:10}, 
doing so is technically involved, as one needs to use 
random $\phi$'s in the definition of $Z_\phi$.
In order to keep the presentation at a reasonable level,
we refrain from giving details on this point.

\begin{theo}[Exponential Stability]
\label{thm:huexpstab}
Assume there is one initial condition $\hat{m}_0^{(1)}$ 
such that the corresponding solution $S(t,s,\WP)\hat{m}_0^{(1)} $
satisfies Birkhoff-bounds.
To be more precise, we assume that $\gamma$ 
from (\ref{e:bass2a}) is sufficiently large that, 
for some $\eta>0$ and $K=(K')^2/\delta$ from Lemma \ref{lem:B1}, 
\begin{equation}
\label{e:Birkbou}
\limsup_{s\to-\infty} \frac{4K}{t-s} \int_s^t  \| S(\tau,s,\WP)\hat{m}_0^{(1)}\|^2 d\tau 
< \gamma -2\eta\;. 
\end{equation}
Let $\hat{m}_0^{(2)}$ be 
any other initial condition.
Then
\[
\lim_{s\to-\infty}| S(t,s,\WP)\hat{m}_0^{(1)} -S(t,s,\WP)\hat{m}_0^{(2)} | 
\cdot \mathrm{e}^{\eta(t-s)} 
=0.
\]
\end{theo}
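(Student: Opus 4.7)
The plan is to imitate the forward stability argument of Lemma \ref{lem:stabf}, but now exploit the fact that both filters are driven by the \emph{same} Wiener path, so the stochastic forcing cancels on subtraction and the difference satisfies a purely pathwise, deterministic-looking inequality. Then Gronwall plus the hypothesis \eqref{e:Birkbou} delivers exponential pullback decay.

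First I would set $e(t) = S(t,s,\WP)\hat{m}_0^{(1)} - S(t,s,\WP)\hat{m}_0^{(2)}$ and subtract the two copies of \eqref{eq:M}. The noise term $d\WP$, the forcing $f$, and the signal-driven term $\omega A^{-2\alpha}\Pl u$ all cancel, giving
\[
\partial_t e + \delta A e + \cB(\hat{m}^{(1)},\hat{m}^{(1)}) - \cB(\hat{m}^{(2)},\hat{m}^{(2)}) + \omega A^{-2\alpha}\Pl e = 0 ,
\]
which holds for every fixed path. Taking the $\cH$-inner product with $e$ and using the bilinear estimate \eqref{eq:Best} (with $w = \hat{m}^{(1)}$, so that the coefficient $\|\hat{m}^{(1)}\|^2$ appears, matching the assumption) together with \eqref{e:bass2a} for the mean-reversion term, one obtains, after the $\|e\|^2$ terms cancel,
\[
\tfrac12 \frac{d}{dt}|e|^2 \leq \tfrac12\bigl( K\|\hat{m}^{(1)}(t)\|^2 - \gamma \bigr)|e(t)|^2 .
\]
(If needed, an extra factor of two is absorbed by doing Young's inequality less tightly, which accounts for the $4K$ in \eqref{e:Birkbou} rather than the sharper $K$.)

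Next I would integrate this scalar ODE inequality from $s$ to $t$, yielding
\[
|e(t)|^2 \leq |\hat{m}_0^{(1)}-\hat{m}_0^{(2)}|^2 \exp\!\Bigl( \int_s^t \bigl( K\|\hat{m}^{(1)}(\tau)\|^2 - \gamma \bigr)\,d\tau\Bigr).
\]
Multiplying by $\mathrm{e}^{2\eta(t-s)}$ and factoring out $(t-s)$ from the exponent rewrites the right-hand side as
\[
|\hat{m}_0^{(1)}-\hat{m}_0^{(2)}|^2 \exp\!\Bigl( (t-s)\Bigl[ \tfrac{1}{t-s}\!\int_s^t K\|\hat{m}^{(1)}(\tau)\|^2 d\tau - \gamma + 2\eta \Bigr]\Bigr).
\]
For $t$ fixed and $s\to-\infty$, the initial-data prefactor is constant, and the bracket tends (as a limsup) to a strictly negative value by hypothesis \eqref{e:Birkbou}. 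Hence $|e(t)|^2 \mathrm{e}^{2\eta(t-s)}\to 0$, which is the claim.

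The only conceptually delicate point is that this reduction to a scalar Gronwall is clean precisely because the shared noise and shared $u$ cancel: there is no residual stochastic convolution to dominate, unlike in the pullback accuracy proof of Theorem \ref{thm:pbacc}. The real obstacle is therefore pushed entirely into verifying \eqref{e:Birkbou} for at least one trajectory; as the authors point out, this Birkhoff-type bound does not follow from the stationary theory of Section \ref{ssec:one} because \eqref{eq:M} is non-autonomous through $u$, and its proof is postponed. Once that bound is granted, the stability argument itself is the short computation sketched above.
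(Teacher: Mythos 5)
Your proposal is correct and follows essentially the same route as the paper: subtract the two copies of \eqref{eq:M} so the noise, forcing, and $u$-dependent terms cancel pathwise, estimate the bilinear difference by Lemma \ref{lem:B1} with the weight on $\hat{m}^{(1)}$, absorb the $\|e\|^2$ terms using \eqref{e:bass2a}, and conclude by Gronwall together with the Birkhoff hypothesis \eqref{e:Birkbou}. The only (harmless) difference is that by applying \eqref{eq:Best} directly you obtain the sharper coefficient $K\|\hat{m}^{(1)}\|^2$ where the paper's splitting via $2\cB(\hat{m}_1,v)-\cB(v,v)$ and its choice of Young's inequality yields $4K\|\hat{m}_1\|^2$, which is exactly the slack you note the hypothesis already provides.
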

Recall we verified in Theorem \ref{thm:pbacc}
that (\ref{eq:M}) 
has a random pullback absorbing set in $L^2$ centered around $u(t)$. 
Together with Theorem \ref{thm:huexpstab}
this immediately implies that 
Equation (\ref{eq:M}) 
has a random pullback attractor in $L^2$ consisting of a single point
that attracts all solutions. 
Let us remark, that we did not show that the attractor also pullback-attracts 
tempered bounded sets,
but this is a straightforward modification.

\begin{proof}{\em (Theorem \ref{thm:huexpstab})}
Define here $v=\hat{m}_1-\hat{m}_2$, where $\hat{m}_i(t)=S(t,s,\WP)\hat{m}_0^{(i)}$ 
are solutions of (\ref{eq:M}) with different initial conditions.
It is easy to see by the symmetry of $\cB$
that 
\[
\partial_t v +  \delta Av + \cB( \hat{m}_1 + \hat{m}_2 , v) +  \omega A^{-2\alpha}  \Pl v = 0
\]
or
\[
\partial_t v +  \delta Av + 2\cB( \hat{m}_1, v) - \cB(v,v) + \omega A^{-2\alpha}  \Pl v = 0\;.
\]
Thus
\[
\frac12 \partial_t |v|^2 + \delta\|v\|^2 
+ \omega \langle A^{-2\alpha} \Pl v,v\rangle 
\le 2K' \|\hat{m}_1\| \|v\| |v|\;.
\]
By (\ref{e:bass2a})
\[
 \partial_t |v|^2 + \delta\|v\|^2 +\gamma |v|^2 
\le 4K' \|\hat{m}_1\| \|v\| |v| \;.
\]
Hence, using Young's inequality
($ab \leq \frac1{4\delta}a^2+\delta b^2$) 
with $K=(K')^2/\delta$

\[
 \partial_t |v(t)|^2 +  \gamma |v|^2 \leq    4K \|\hat{m}_1\|^2 |v|^2\;.
\]
Thus, using the comparison principle,
\[
|v(t)|^2 \leq |v(s)|^2 \exp\bigl( \int_s^t [ 4K \|\hat{m}_1\|^2 - \gamma] dr \bigr).  
\]
This converges to $0$ exponentially fast,
provided $\gamma$ is sufficiently large, 
as $v(s)= \hat{m}_0^{(1)}-\hat{m}_0^{(1)}$.
Moreover,
\[
|v(t)|^2 \mathrm{e}^{2\eta (t-s)} 
\leq |v(s)|^2 \exp\bigl( (t-s) r(t,s) \bigr).  
\]
with 
$r(t,s)= \frac{1}{t-s} \int_s^t [ 4K \|\hat{m}_1\|^2d\tau -  \gamma +2\eta $
and 
$\limsup_{s\to-\infty} r(t,s) < 0$ by assumption.
This implies the claim of the theorem.
\end{proof}

\section{Numerical Results}
\label{sec:num}

In this section we study the SPDE \eqref{eq:nse2}
by means of numerical experiments, illustrating 
the results of the previous sections. 
We invoke a split-step scheme to solve equation (\ref{eq:nse2}),
in which we compose numerical integration of the Navier-Stokes
equation (\ref{eq:nse}) 
with numerical solution of the Ornstein-Uhlenbeck process
\begin{equation}
\frac{d \hat{m}}{d t} + 
\omega A^{-2\alpha}(\hat{m}-u)= \omega \sigma_0
A^{-2\alpha-\beta}\frac{dW}{dt}, 
\quad \hat{m}(0)=\hat{m}_0,
\label{eq:nse222}
\end{equation}
at each step. The Navier-Stokes equation (\ref{eq:nse})
itself is solved by a pseudo-spectral method based on
the Fourier basis defined through \eqref{eq:fb}, whilst
the Ornstein-Uhlenbeck process is approximated by
the Euler-Maruyama scheme \cite{kloeden1994stochastic}.
All the examples concern the case $\lambda=\infty$ only;
however similar results are obtained for finite, but sufficiently
large, $\lambda.$

%\begin{itemize}

% \item We should divide into sections illustrating:
% Forward Accuracy (this is the current subsection 6.1);
% Forward Stability (this is Figures 7--10 in subsection 6.2,
% and also concerns forward accuracy but no need to make this
% explicit); Pullback Accuracy and Stability (this is 
% just Figure 12 but should we have more?).

% \item Each subsection should explicitly state which theorem(s)
% we are illustrating.

%\item Figure captions need to be clearer. Amongst other things:
%(i) $r<1$ is mentioned in Figures 5/6 yet that parameter
%belongs to another paper and not this one; 
% (ii) $\ell^2$
% norm is mentioned in Figure 1, it should be $L^2$; also
% $L^2$ norm should be mentioned in all other captions to
% be consistent (or first figure referred to for details
% when format identical just different parameters); 
% (iii) Figures
% Figures 7,8,9 yes good to link to earliers figures, but
% more information needed on top of this. 

% \item Should we remove the Figure 11 as, although it is
% interesting to me, it is a distraction from our main themes?
% Or do we keep it for comparison with Figure 10? In which
% case Figure 10/Figure 11 comparison needs more discussion.
% Also captions need work.

% \item Figure 9 caption. The discussion about random attractor
% not being a point does not really belong in the caption.
% In any case we have not explicitly worked with random attractors
% in the theorems. So include disucssion in-text, not in-caption,
% and make it in terms linked to the theorems proved.

%\end{itemize}

\subsection{Forward Accuracy}
\label{sec:num_forward}

In this section, we will illustrate the results of Theorem \ref{thm:mainacc}.
We will let $\alpha=1/2$ throughout; since $\beta$ is always
non-negative the trace-class noise condition $4\alpha+2\beta>1$
is always satisfied.
Notice that the parameter $\omega$
sets a time-scale for relaxation towards the true signal,
and  $\sigma_0$ sets a scale for the size of fluctuations
about the true signal. The parameter $\beta$
rescales the fluctuation size in the observational
noise at different wavevectors
with respect to the relaxation time.
First we consider setting $\beta=0.$
In Fig. \ref{a1c5.t} we show numerical experiments 
with $\omega=100$ and $\sigma_0=0.05.$ 
We see that the noise level
on top of the signal in the low modes is almost $O(1)$, and
that the high modes do not synchronize at all; the total error 
remains $O(1)$ although trends in the signal are followed.  
On the other hand, for the smaller value of $\sigma_0=0.005$,
still with $\omega=100$,
the noise level on the signal in the low modes is moderate,
the high modes synchronize sufficiently well, 
and the total error is small; this is shown in Fig. \ref{a1c05.t}.

Now we consider the case $\beta=1$. 
Again we take $\omega=100$ and
$\sigma_0=0.05$ and $0.005$
in Figures \ref{a1dc5.t}
and \ref{a1dc05.t}, respectively.  
The synchronization is stronger than
that observed for $\beta=0$ in each case. This is because 
the forcing noise decays more rapidly for large wavevectors when
$\beta$ is increased, as can be observed in the relatively 
smooth trajectories of the high modes of the estimator.

For the case when $\sigma_0=0$ we recover a (non-stochastic)
PDE for the estimator $\hm$.  The values of
$\sigma_0$ and $\beta$ are irrelevant.  The value of $\omega$
is the critical parameter in this case. 
For values of $\omega$ of $O(100)$ the 
convergence is exponentially
fast to machine precision.  For values of $\omega$ of 
$O(1)$ the estimator does not exhibit stable
behaviour.  For intermediate values, 
the estimator may approach the signal and remain bounded and 
still an $O(1)$ distance away (see the case $\omega=10$ 
in Fig. \ref{d10.t}), or else 
it may come close to synchronizing (see the case $\omega=30$
in Fig. \ref{d30.t}).

\begin{figure*} 
\includegraphics[width=1\textwidth, height=8.5cm]{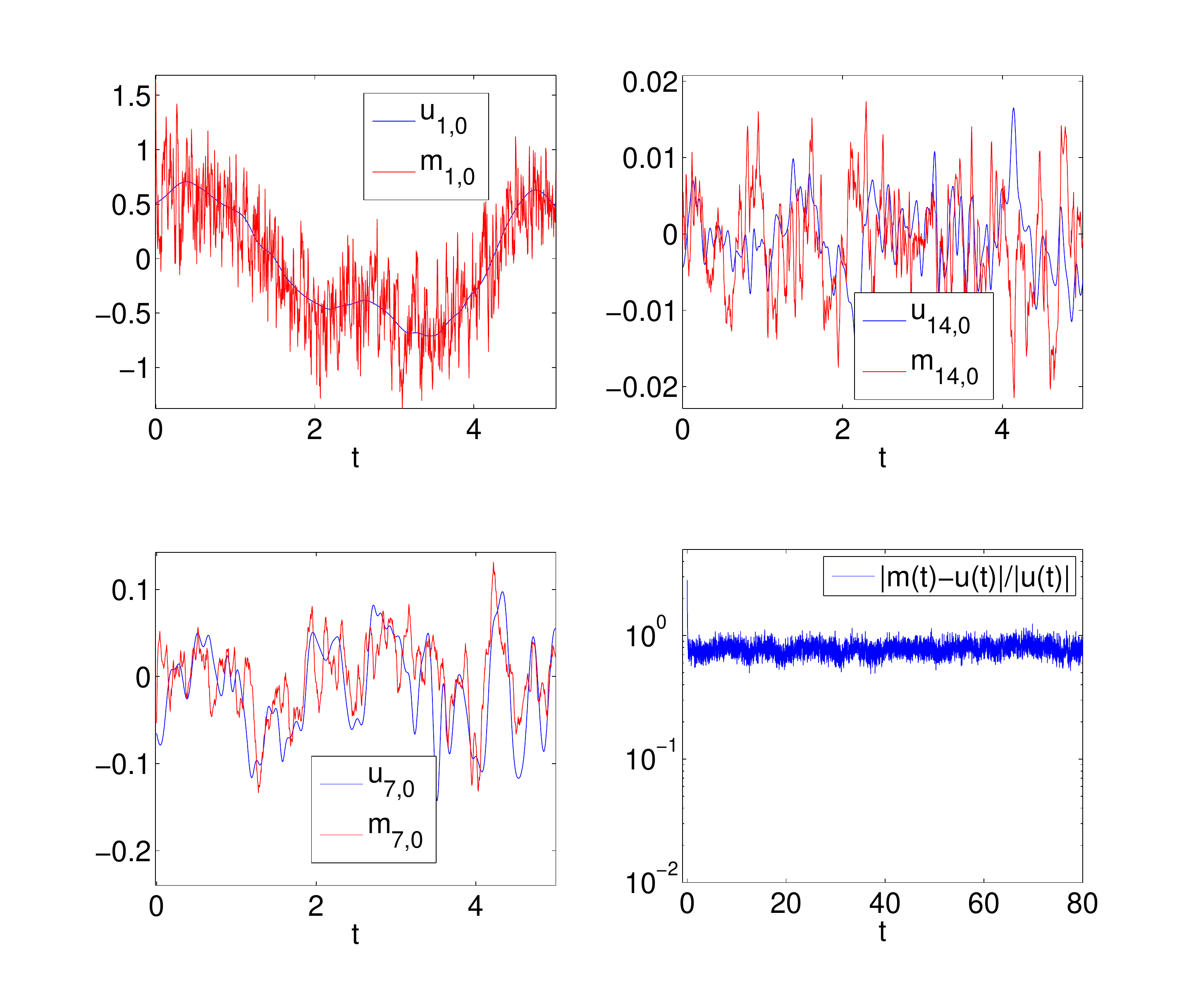}
\caption{Trajectories of various modes of the estimator $\hat{m}$ 
and the signal $u$ are depicted above for 
$\beta=0$ and $\sigma_0=0.05$, along with the total relative 
error in the $L^2$ norm, $|\hat{m}-u|/|u|$.}
\label{a1c5.t}
\end{figure*}

% \begin{figure*}
% \includegraphics[width=1\textwidth]{undamped_noiseB_error.pdf}
% \caption{The relative errors of various modes, given by 
% $|\hat{m}_{k}-u_k|/{\rm max}[0.01,|u_k|]$, and the total relative error
% for $\beta=0$ and $\sigma_0=0.05$.}
% \label{a1c5.e}
% \end{figure*}

\begin{figure*} 
\includegraphics[width=1\textwidth, height=8.5cm]{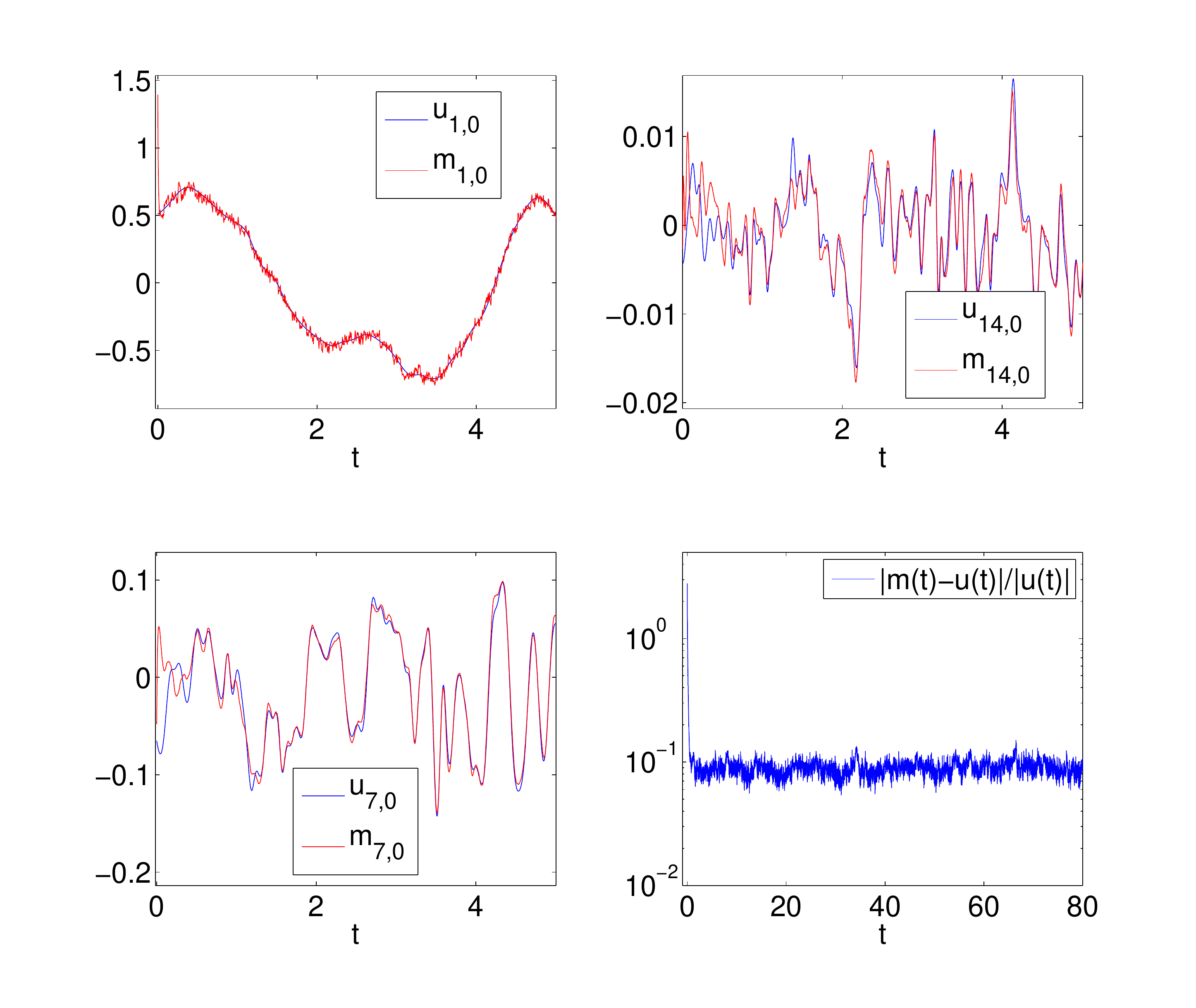}
\caption{Trajectories of various modes of the estimator $\hat{m}$ 
and the signal $u$ are depicted above for $\beta=0$ and 
$\sigma_0=0.005$, along with the relative error
in the $L^2$ norm, $|\hat{m}-u|/|u|$.}
\label{a1c05.t}
\end{figure*}

% \begin{figure*}
% \includegraphics[width=1\textwidth]{undamped_noise_error.pdf}
% \caption{The relative errors of various modes, given by 
% $|\hat{m}_{k}-u_k|/{\rm max}[0.01,|u_k|]$, and the total relative error
% for $\beta=0$ and $\sigma_0=0.005$.}
% \label{a1c05.e}
% \end{figure*}

\begin{figure*}
\includegraphics[width=1\textwidth, height=8.5cm]{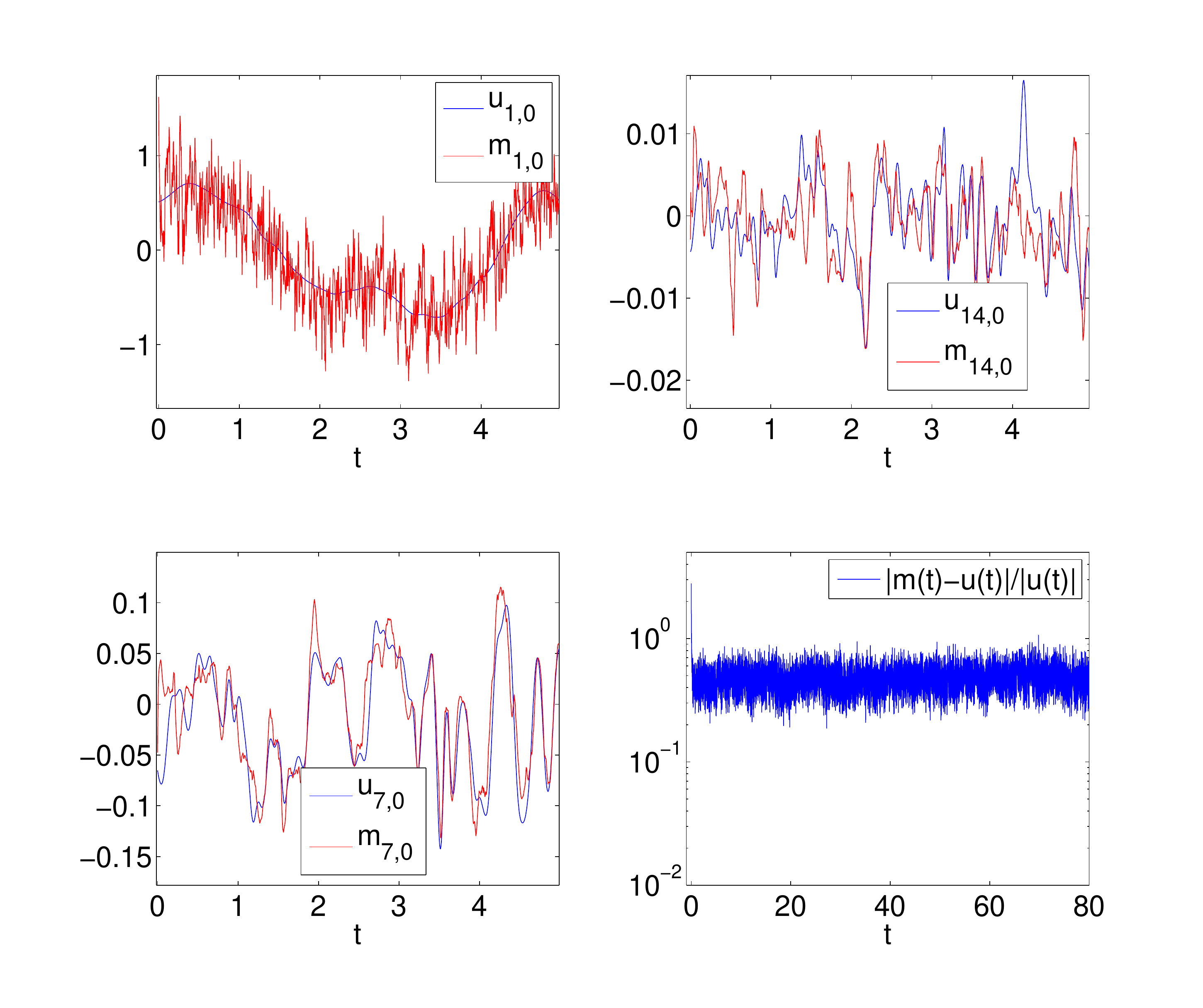}
\caption{Trajectories of various modes of the estimator $\hat{m}$ 
and the signal $u$ are depicted above for 
$\beta=1$ and $\sigma_0=0.05$, along with the relative error
in the $L^2$ norm, $|\hat{m}-u|/|u|$.}
\label{a1dc5.t}
\end{figure*}

% \begin{figure*}
% \includegraphics[width=1\textwidth]{damped_noiseB_error.pdf}
% \caption{The relative errors of various modes, given by 
% $|\hat{m}_{k}-u_k|/{\rm max}[0.01,|u_k|]$, and the total relative error
% for $\beta=2$ and $\sigma_0=0.05$.}
% \label{a1dc5.e}
% \end{figure*}

\begin{figure*} 
\includegraphics[width=1\textwidth, height=8.5cm]{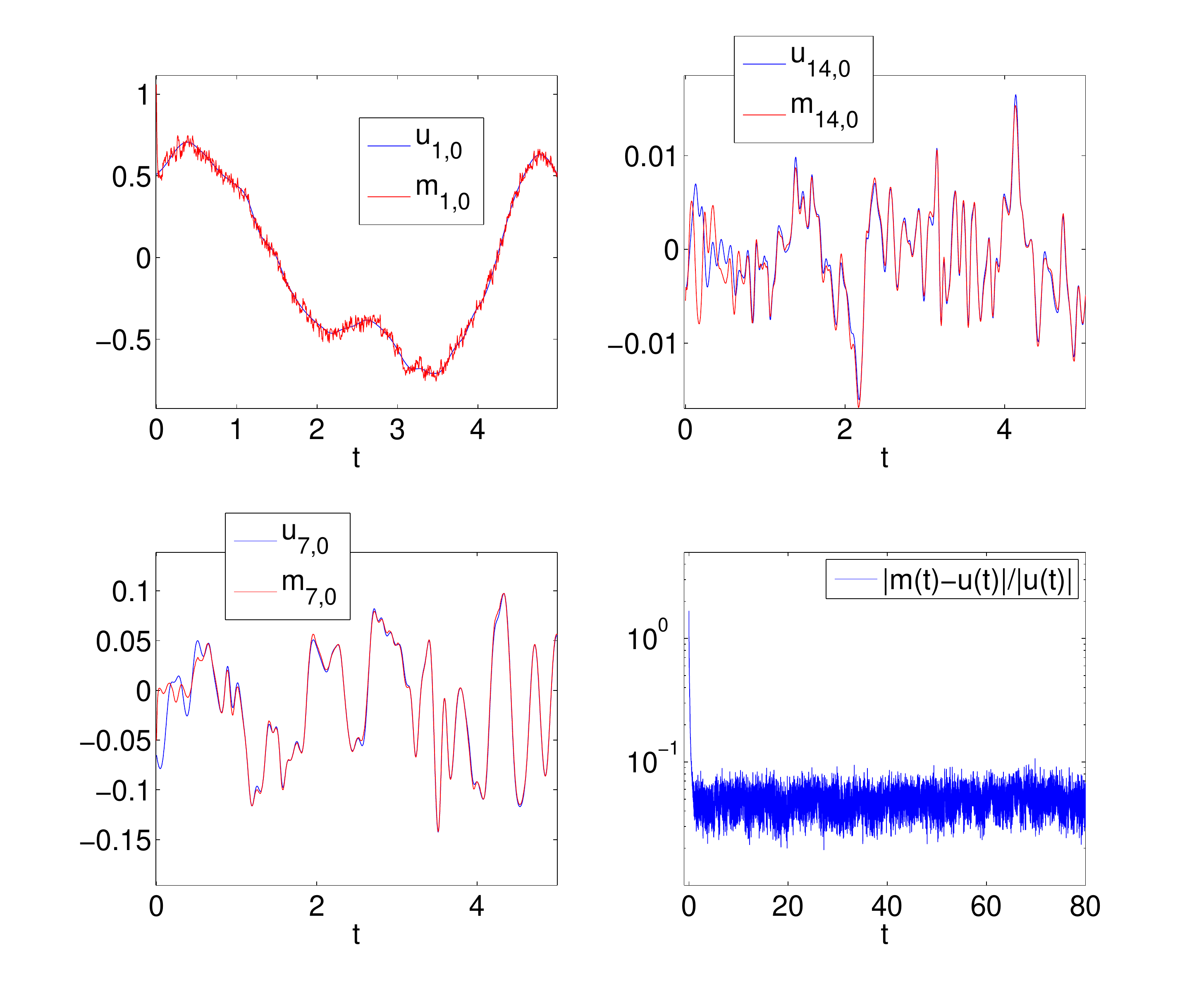}
\caption{Trajectories of various modes of the estimator $\hat{m}$ 
and the signal $u$ are depicted above for $\beta=1$ and 
$\sigma_0=0.005$, along with the relative error
 in the $L^2$ norm, $|\hat{m}-u|/|u|$.}
\label{a1dc05.t}
\end{figure*}

% \begin{figure*}
% \includegraphics[width=1\textwidth]{damped_noise_error.pdf}
% \caption{The relative errors of various modes, given by 
% $|\hat{m}_{k}-u_k|/{\rm max}[0.01,|u_k|]$, and the total relative error
% for $\beta=2$ and $\sigma_0=0.005$.}
% \label{a1dc05.e}
% \end{figure*}

\begin{figure*}[h]
\includegraphics[width=1\textwidth, height=8.5cm]{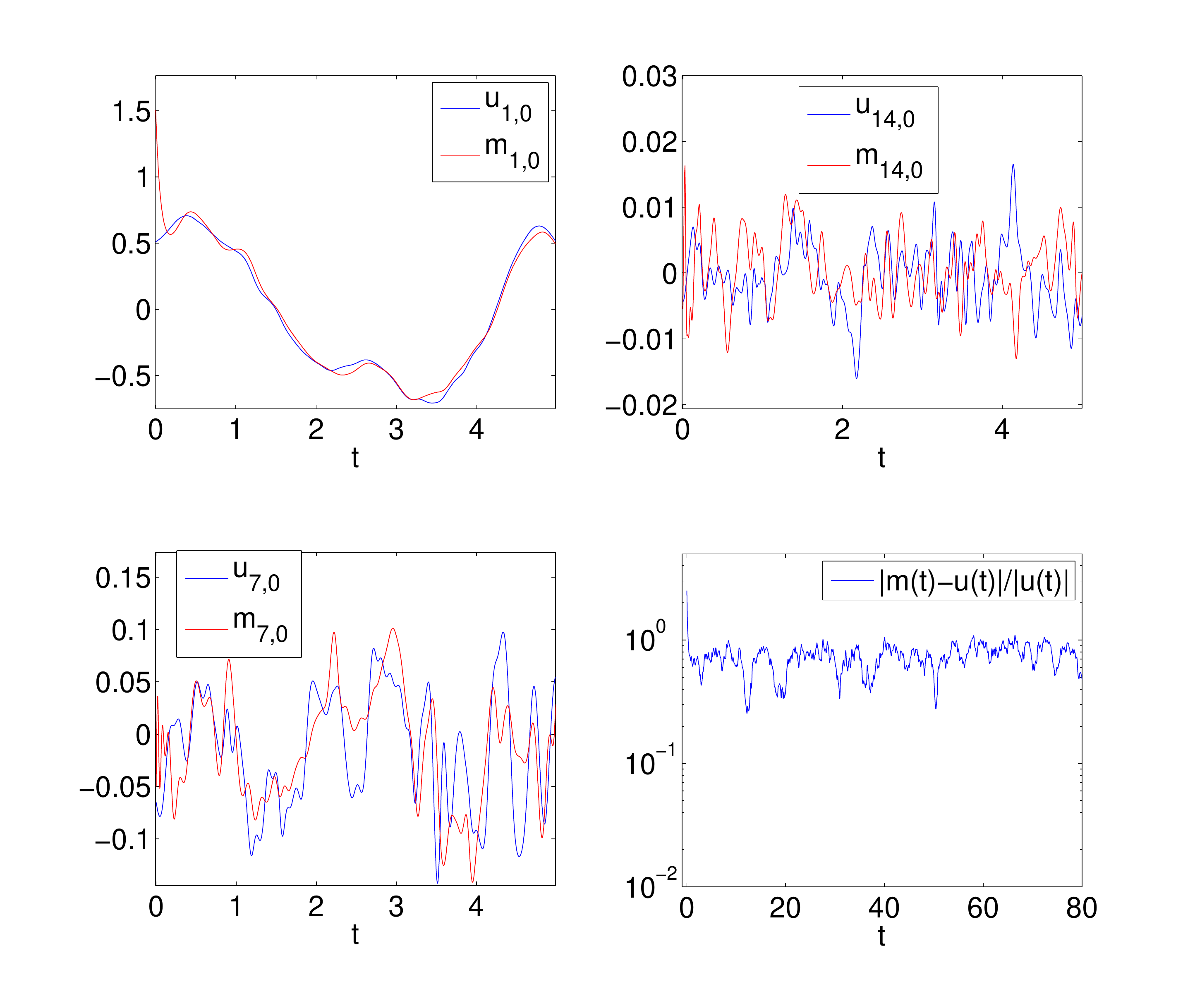}
\caption{Trajectories of various modes of the estimator $\hat{m}$ 
and the signal $u$ are depicted above for $\sigma_{0}=0$ 
and $\omega=10$, along with the relative error
in the $L^2$ norm, $|\hat{m}-u|/|u|$.}
\label{d10.t}
\end{figure*}

% \begin{figure*}
% \includegraphics[width=1\textwidth]{determ10_error.pdf}
% \caption{The relative errors of various modes, given by 
% $|\hat{m}_{k}-u_k|/{\rm max}[0.01,|u_k|]$, and the total relative error
% for $r<1$ and $\omega=10$.}
% \label{d10.e}
% \end{figure*}

\begin{figure*}[h]
\includegraphics[width=1\textwidth, height=8.5cm]{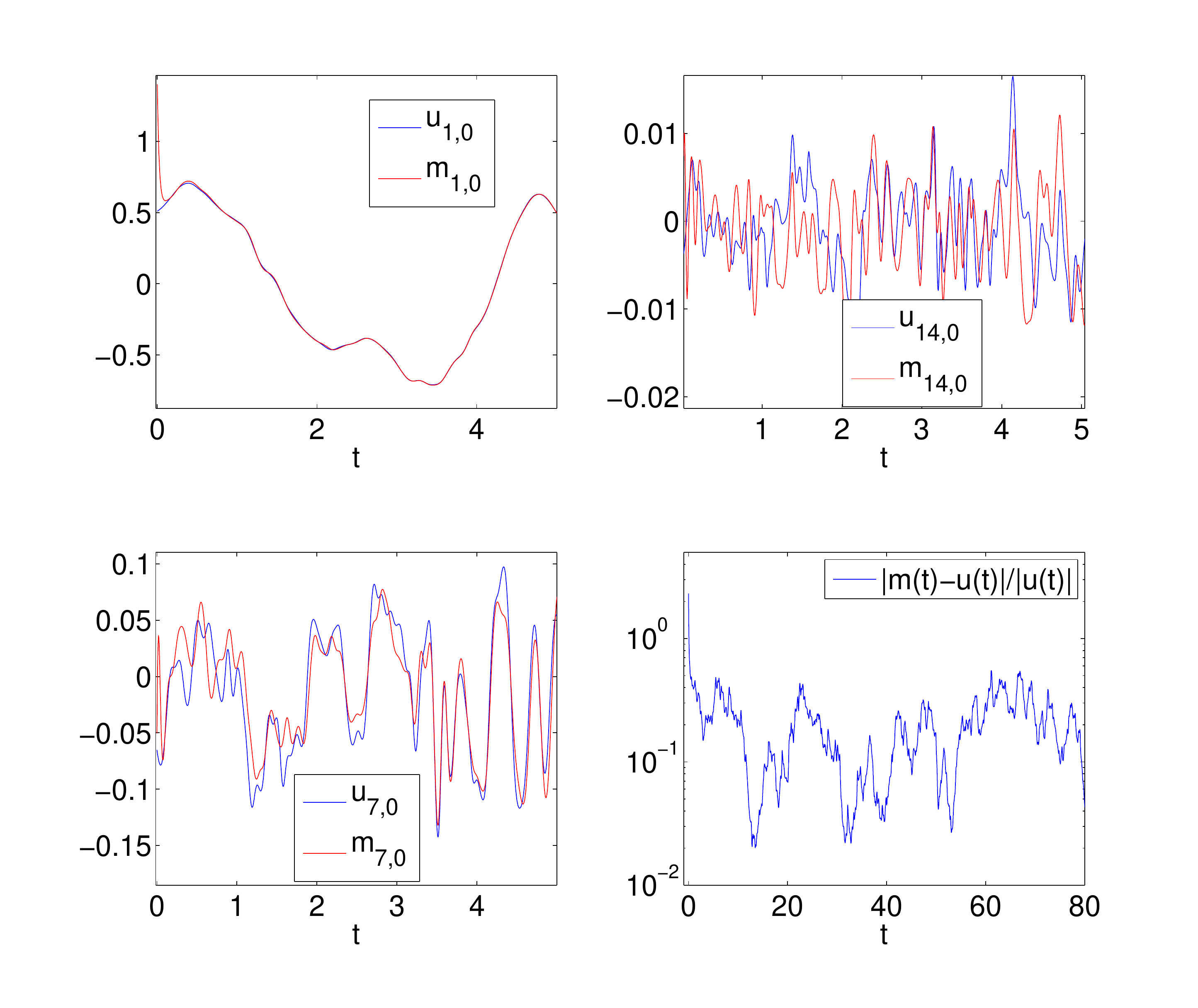}
\caption{Trajectories of various modes of the estimator $\hat{m}$ 
and the signal $u$ are depicted above for
$\sigma_{0}=0$  and $\omega=30$, along with the relative error
in the $L^2$ norm, $|\hat{m}-u|/|u|$.}
\label{d30.t}
\end{figure*}

\subsection{Forward Stability}
\label{sec:num_pullback}

This section will provide numerical evidence supporting
Theorem \ref{t:stabf}.  In order to investigate the 
stability of estimators
%existence and
%properties of (random) attractors, 
we reproduce ensembles of solutions of equation \eqref{eq:nse2}, 
for a fixed realization of $W(t)$, and a family of
initial conditions. 
We let $\beta=0$ throughout this section, and we always
choose values of $\alpha$ which ensure that the trace class
condition on the noise, $4\alpha+2\beta>1$, is satisfied.

Let $m^{(k)}(t)$ be the solution at time $t$
of \eqref{eq:nse2} where the initial conditions
are drawn from a Gaussian whose covariance
is proportional to the model covariance: 
$m^{(k)}(0) \sim \cN(0,30^2 {\hat {C}})$.  
First we consider $\alpha=1/2$.
Figure
\ref{att05} corresponds to parameters given in Fig. \ref{a1c5.t} of 
section \ref{sec:num_forward}.  
The top figure simply shows the ensemble of trajectories, while the bottom 
figure shows the convergence of $|m^{(k)}(t)-m^{(1)}(t)|/|m^{(1)}(t)|$
for $k>1$.  Notice the trajectories converge to each other,
indicating stability. 
%because the attractor for the given
%realization of noise is a single point. 
But, the trajectories here
do not converge
to the truth (or driving signal). 
This is because the neighbourhood of the signal
which bounds the estimators is not small.
%since the attractor for any 
%given realization of the noise is not close to the truth, 
%i.e. the 
%random attractor has a 
%large variance around the truth.  
The next image, Fig. \ref{att005}, 
shows results for the smaller value of $\sigma_0=0.005$ 
corresponding to Fig. \ref{a1c05.t} of section \ref{sec:num_forward}.
Notice the rate of convergence of the trajectories to each other
(bottom) is very similar to the previous case, indicating that there 
is again stability.
%a point attractor for a given realization of noise.
However, this time the neighbourhood of the signal
which bounds the estimators is small, and so they are indeed accurate.
%random attractor has a smaller variance than the previous one, i.e. 
%the trajectories also converge more closely to the truth.
Fig. \ref{noat005} shows the results for the larger value of 
$\alpha=1$ (still with $\beta=0$).  In this case, there is no
stability, %point attractor, 
i.e. the trajectories do not converge to
each other (bottom), and also no convergence to the truth
(bottom right of the top panels), although all trajectories do 
remain in a neighbourhood of the truth and the low wavevector
modes converge (top left), 
so there is accuracy with a large bound.  
Furthermore, the distance of the
trajectories from each other is similar to the distance from the
truth, so the attractor in this case may be similar to the attractor
of the underlying Navier-Stokes equation.

%\note{KCZ: Kody, I find the notation a bit confusing, $m^{(k)},m^{1}$ are 2 different realization of the filter right? Why not call them $m_{1}$ and $m_{2}$ as in Theorem 4.5? Am I missing something here with respect to what $k$ represents?}

\begin{figure*}
\includegraphics[width=1\textwidth, height=8.5cm]{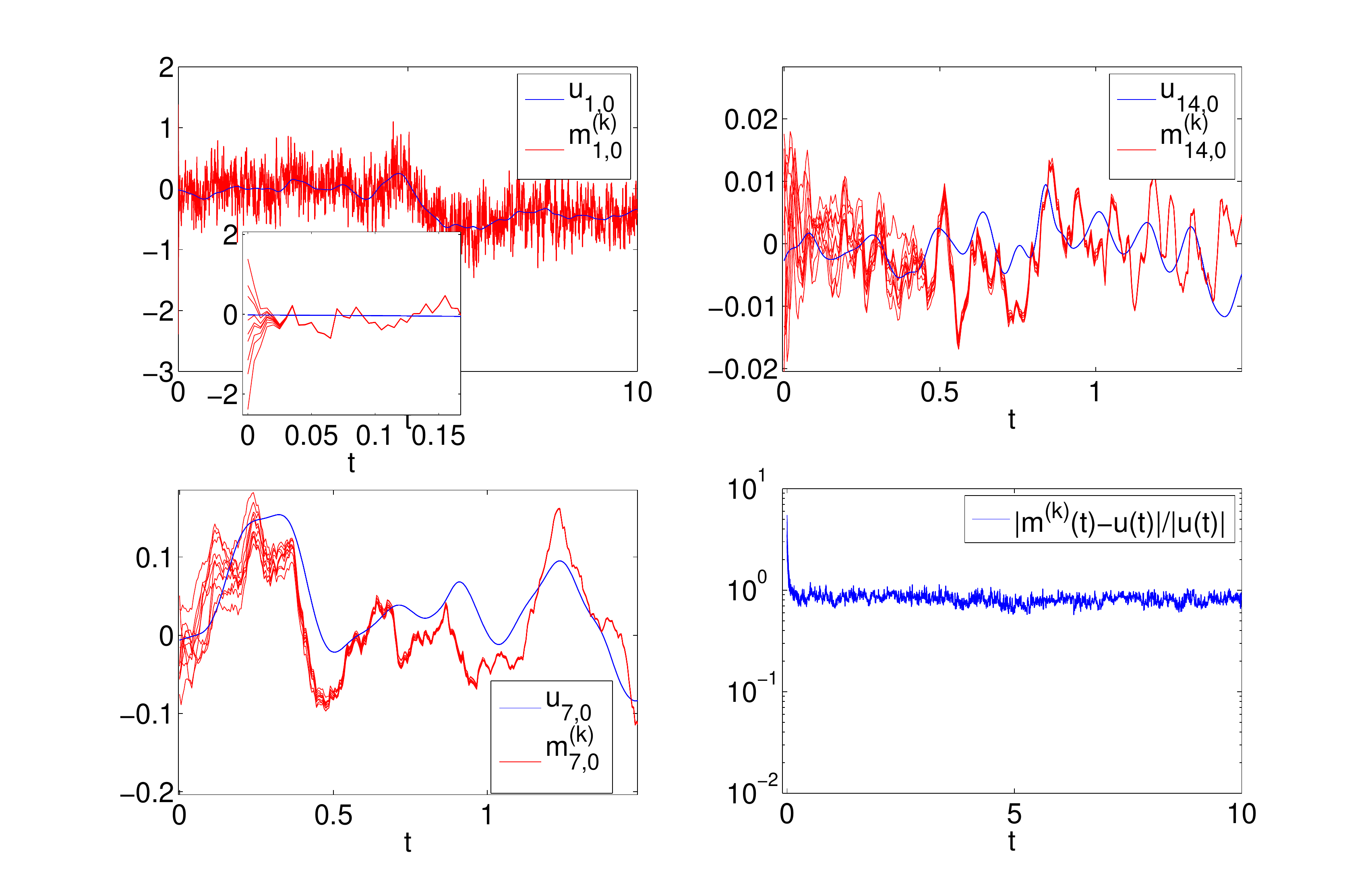}
\includegraphics[width=1\textwidth]{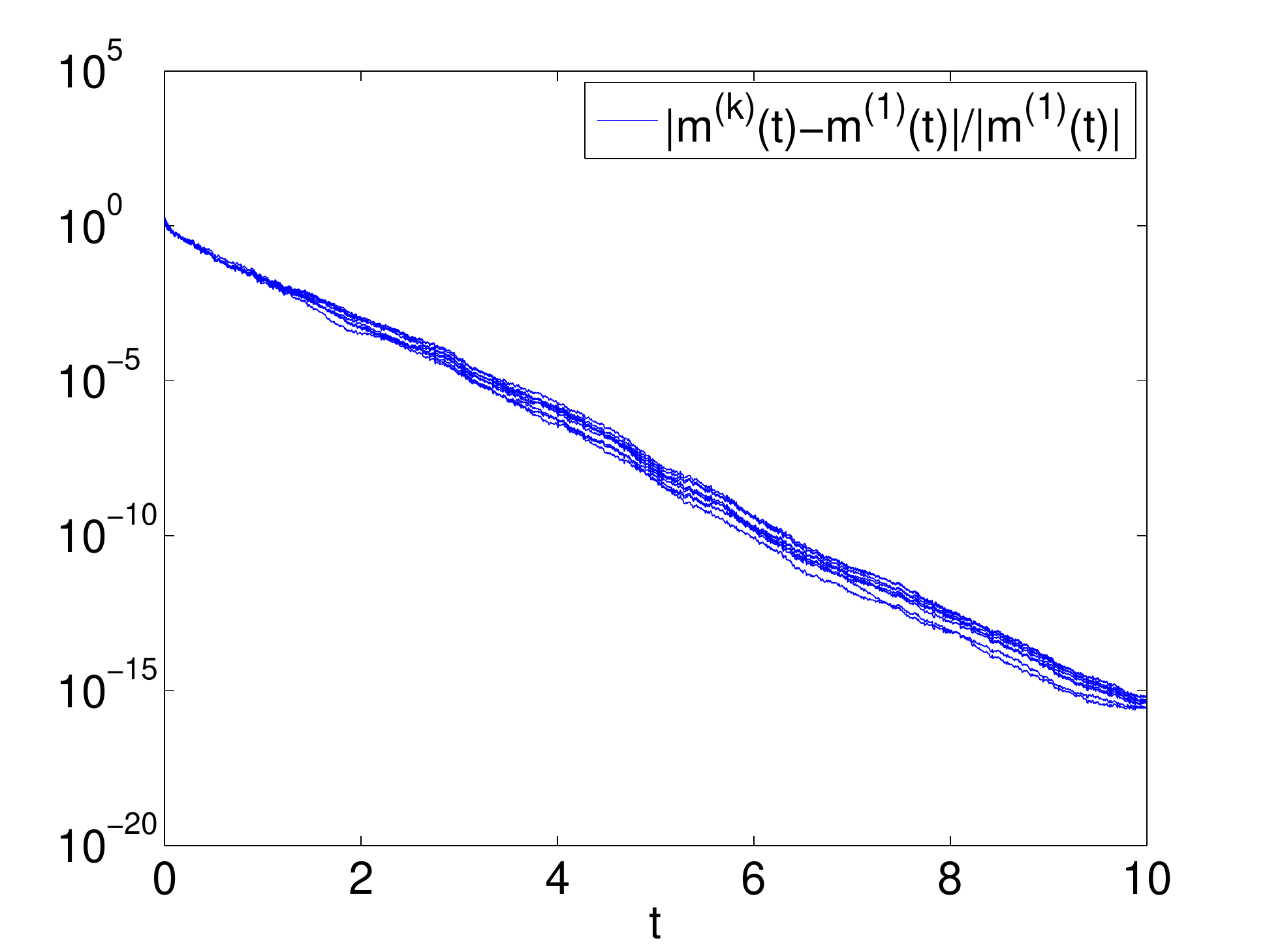}
\caption{The above panels correspond to Fig. \ref{a1c5.t} from the
  text, except illustrating stability by an ensemble of estimators.
The top set of panels are the same as in Fig. \ref{a1c5.t},
while the bottom panel shows stability by convergence of the
estimators to each other.}
\label{att05}
\end{figure*}

\begin{figure*}
\includegraphics[width=1\textwidth, height=8.5cm]{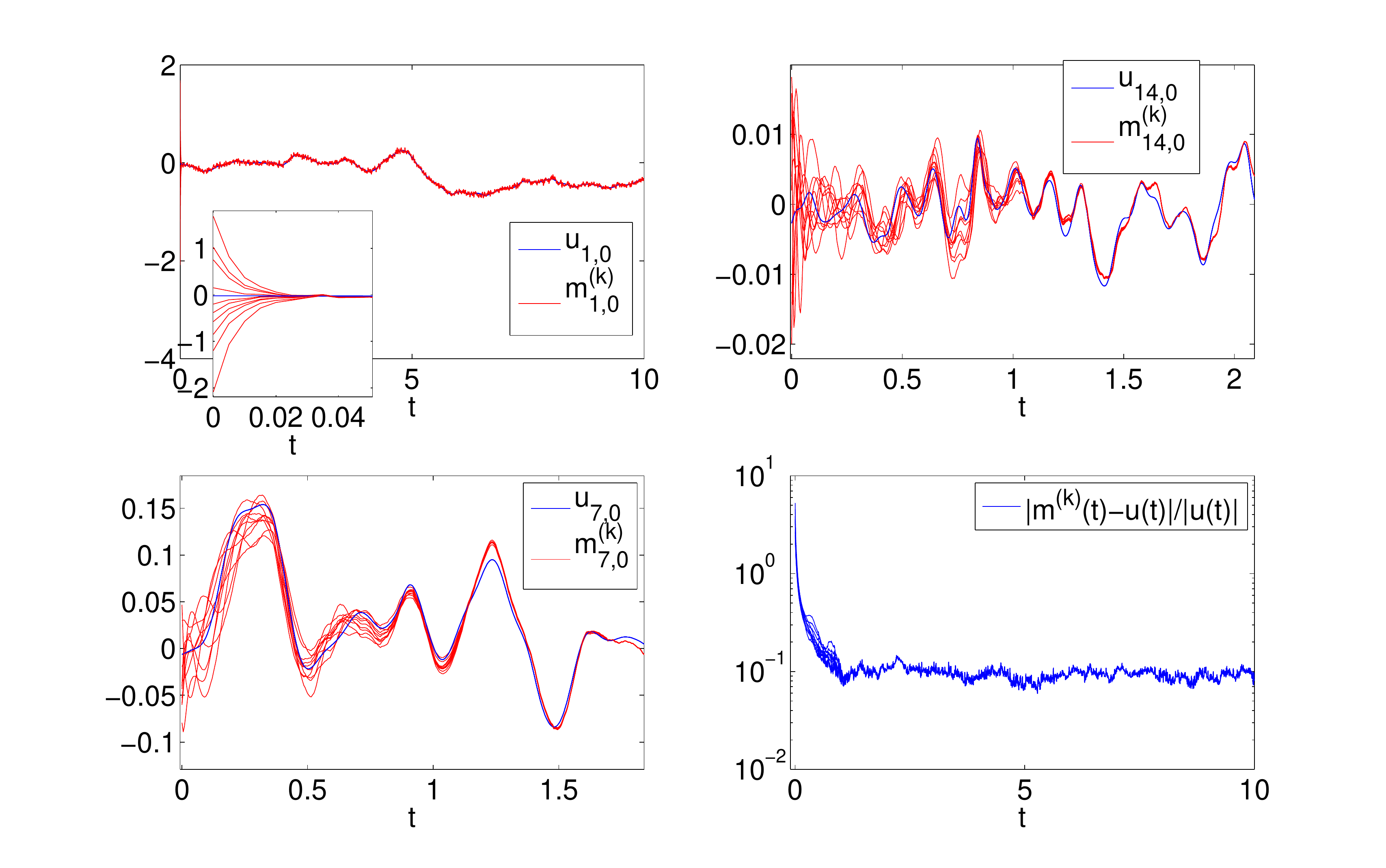}
\includegraphics[width=1\textwidth]{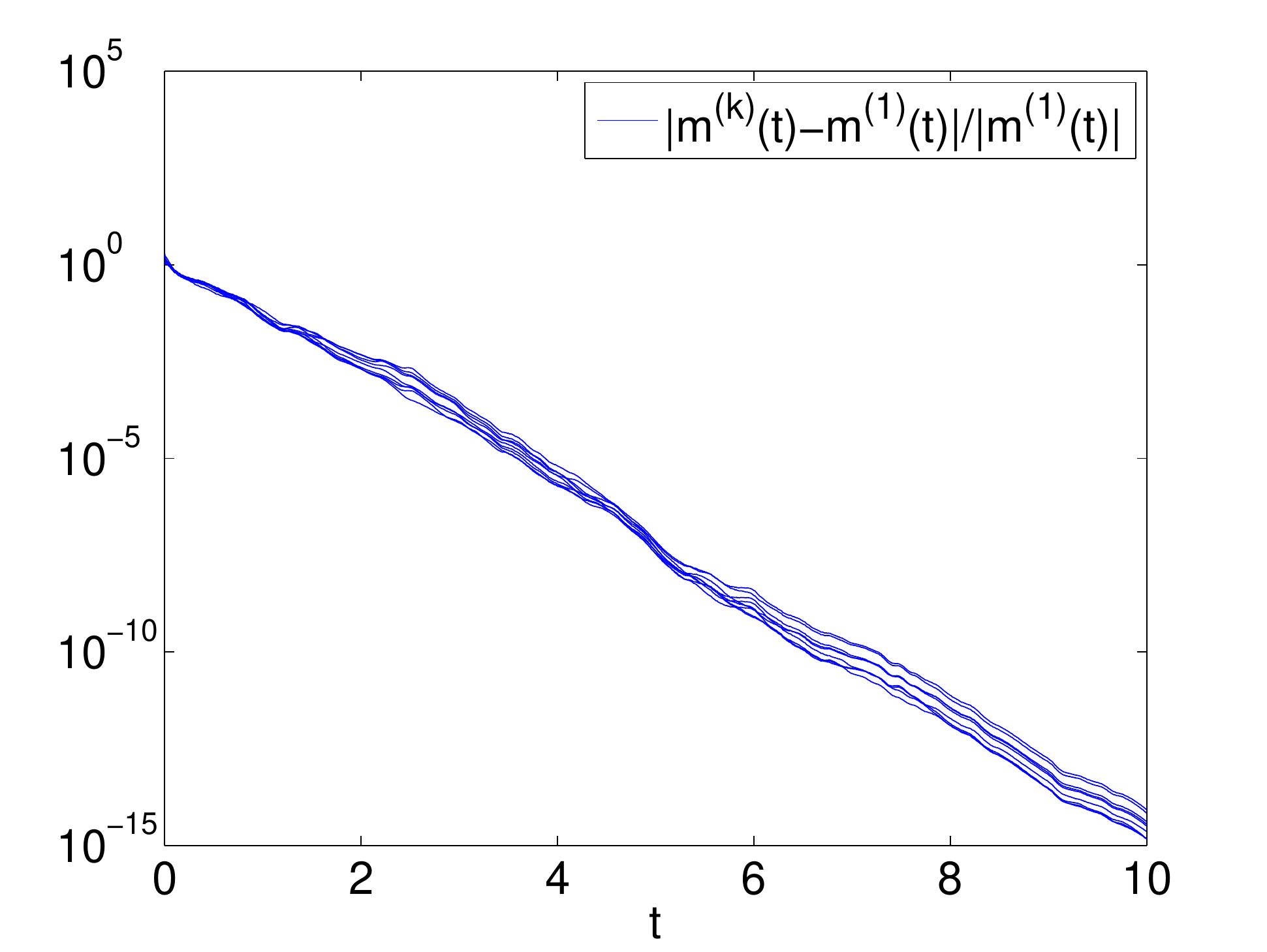}
\caption{The above panels correspond to Fig. \ref{a1c05.t} from the
  text, except illustrating stability by an ensemble of estimators.
The top set of panels are the same as in Fig. \ref{a1c05.t},
while the bottom panel shows stability by convergence of the
estimators to each other.}
\label{att005}
\end{figure*}

\begin{figure*}
\includegraphics[width=1\textwidth,height=8.5cm]{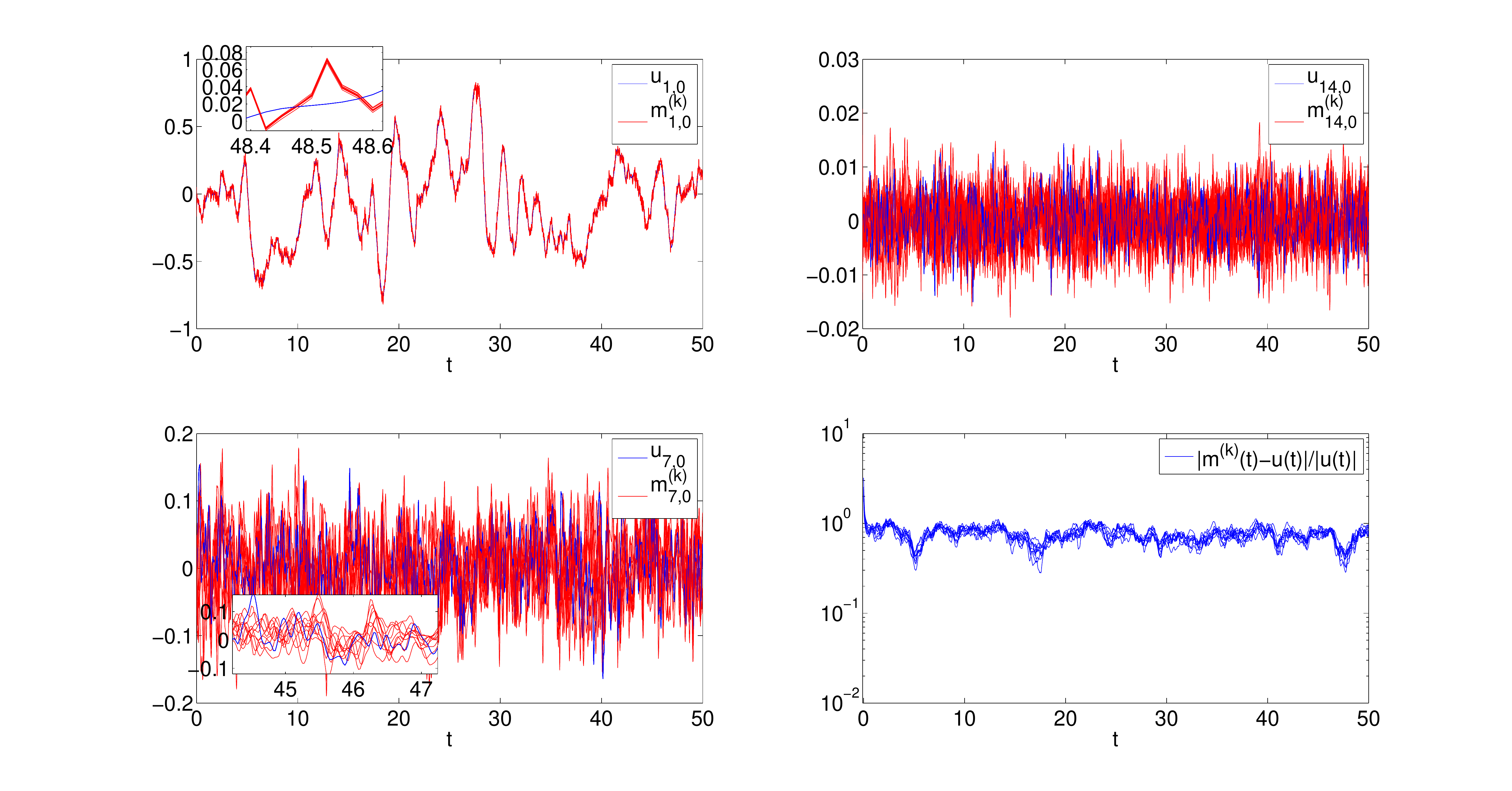}
\includegraphics[width=1\textwidth]{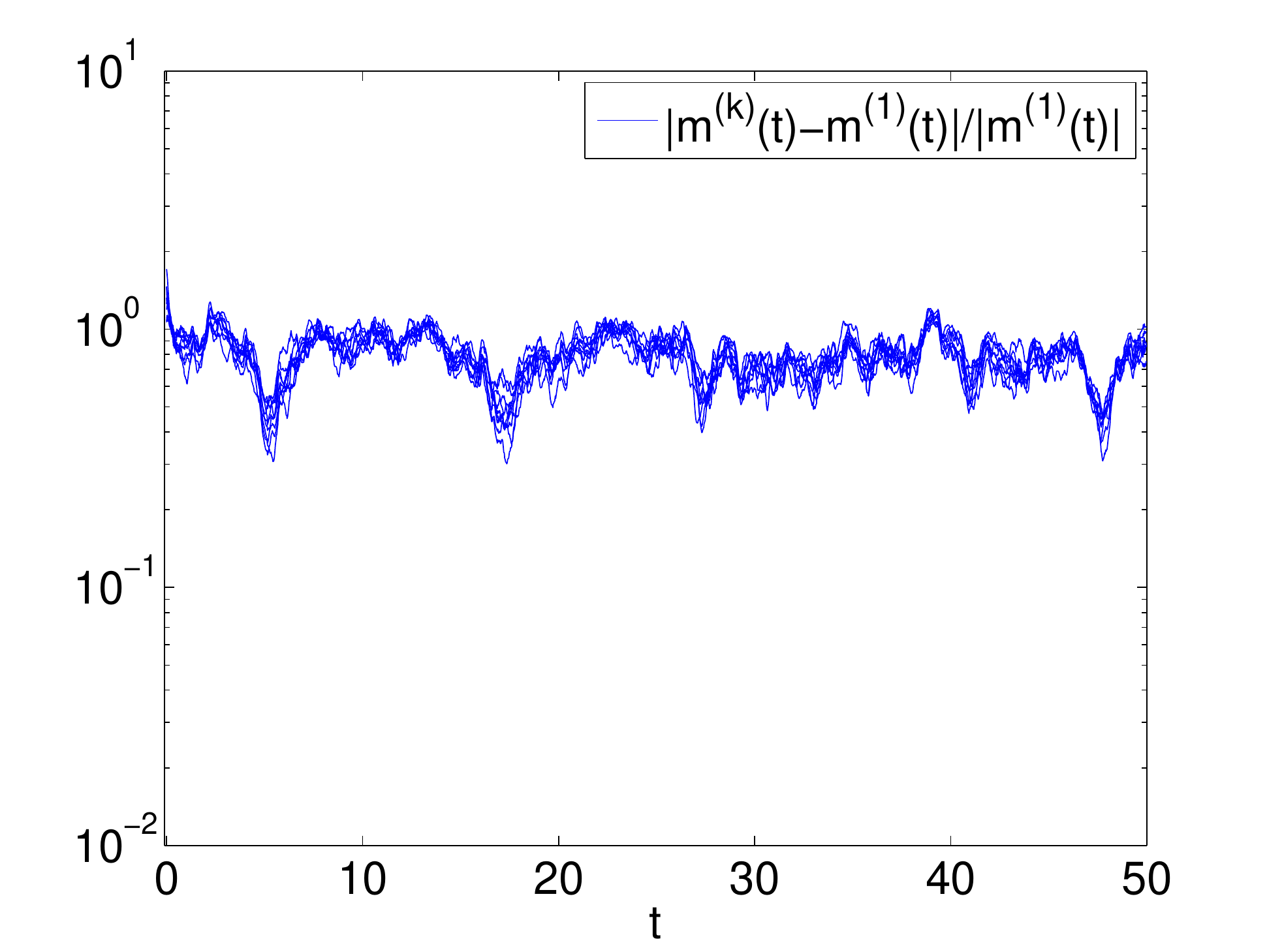}
\caption{The above panels correspond to the same parameter values 
as above Fig. \ref{att005}, except $\alpha=1$. Panels are the same.
There is not stability in this case.}  
%The attractor is definitely not a point
%in this case, as seen in the bottom panel.}
\label{noat005}
\end{figure*}

\subsection{Pullback Accuracy and Stability}

Finally, in this section, we illustrate Theorem \ref{thm:pbacc}.
As the subtle nuance differences between forward and pullback
accuracy and stability ellude standard numerical simulation,
we do not feel it is appropriate to explore this in further detail 
numerically.  So, this section will be brief. 
We include a single image illustrating the equivalence of the above 
experiments in Figures \ref{att005}, \ref{att05}, and \ref{noat005}
to the traditional notion of pullback attractor 
in the case that the attractor is a point: Figure \ref{pbat005}.

\begin{figure*}
\includegraphics[width=1\textwidth, height=8.5cm]{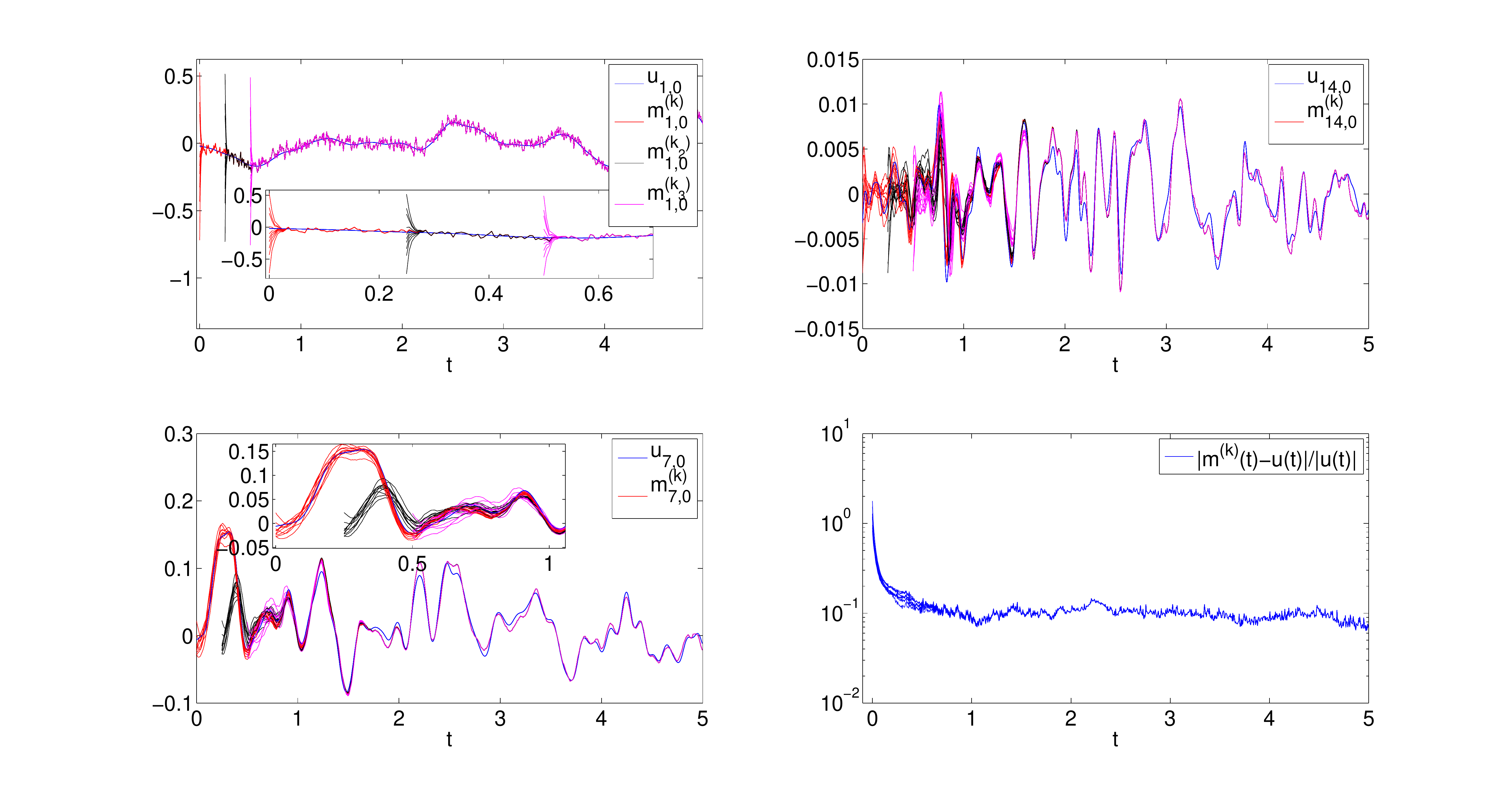}
\caption{The same as Figure \ref{att005}, except the initial ensemble 
is initiated at 3 separate times: $t_1, t_2,$ and $t_3$.  Clearly
the only relevant interval of time is for $t>t_3$.  All trajectories 
converge to each other.}
\label{pbat005}
\end{figure*}

\section{Conclusions}

Data assimilation is important in a range of physical
applications where it is of interest to use data to
improve output from computational models. Analysis of
the various algorithms used in practice is in its infancy.
The work herein contains analysis of an algorithm,
3DVAR, which is prototypical of more complex Gaussian
approximations that are widely used in applications.
In particular we have studied the high frequency in time
observation limit of 3DVAR, leading to a stochastic PDE. 
We have demonstrated mathematically how variance inflation, 
widely used by practitioners, stabilizes, and makes accurate,
this filter, complementing the theory in \cite{lsetal} which
concerns low frequency in time observations. 
It is to be expected that the analytical tools developed
here and in \cite{lsetal} can be built upon to study
more complex algorithms, such as the extended and ensemble
Kalman filters, variants on which are
used in operational weather forecasting.
This will form a focus of our future work.

\bibliographystyle{plain}
\bibliography{fsbib}

\begin{thebibliography}{10}

\bibitem{Arn98}
L.~Arnold.
\newblock {\em {R}andom {D}ynamical {S}ystems}.
\newblock Springer, New York, 1998.

\bibitem{bain2008fundamentals}
A.~Bain and D.~Cri{\c{s}}an.
\newblock {\em Fundamentals of {S}tochastic {F}iltering}.
\newblock Springer Verlag, 2008.

\bibitem{ben02}
A.~Bennett.
\newblock {\em Inverse Modeling of the ocean and Atmosphere}.
\newblock Cambridge, 2002.

\bibitem{beskos2011stability}
A.~Beskos, D.~Crisan, and A.~Jasra.
\newblock On the stability of sequential monte carlo methods in high
  dimensions.
\newblock {\em arXiv preprint arXiv:1103.3965}, 2011.

\bibitem{lsetal}
CEA Brett, KF~Lam, KJH Law, DS~McCormick, MR~Scott, and AM~Stuart.
\newblock Stability of filters for the {Navier-Stokes} equation.
\newblock {\em Arxiv preprint arXiv:1110.2527}, 2011.

\bibitem{carrassi2008data}
A.~Carrassi, M.~Ghil, A.~Trevisan, and F.~Uboldi.
\newblock Data assimilation as a nonlinear dynamical systems problem: Stability
  and convergence of the prediction-assimilation system.
\newblock {\em Chaos: An Interdisciplinary Journal of Nonlinear Science},
  18:023112, 2008.

\bibitem{chorin2010implicit}
A.~Chorin, M.~Morzfeld, and X.~Tu.
\newblock Implicit particle filters for data assimilation.
\newblock {\em Communications in Applied Mathematics and Computational
  Science}, page 221, 2010.

\bibitem{constantin1988navier}
P.~Constantin and C.~Foia{\c{s}}.
\newblock {\em Navier-{S}tokes equations}.
\newblock University of Chicago Press, 1988.

\bibitem{CrDebuFl}
H.~Crauel, Debussche A., and F.~Flandoli.
\newblock Random attractors.
\newblock {\em J. Dynam. Differential Equations}, 9:307--341, 1995.

\bibitem{CF94}
H.~Crauel and F.~Flandoli.
\newblock Attractor for random dynamical systems.
\newblock {\em Prob. Theory and Relat. Fields}, 100:365--393, 1994.

\bibitem{dap2}
G.~Da~Prato and J.~Zabczyk.
\newblock {\em Ergodicity for infinite dimensional systems}, volume 229.
\newblock Cambridge Univ Pr, 1996.

\bibitem{Dap-Z}
G.~Da~Prato and J.~Zabczyk.
\newblock {\em Stochastic equations in infinite dimensions}.
\newblock Cambridge Univ Pr, 2008.

\bibitem{doucet2001sequential}
A.~Doucet, N.~De~Freitas, and N.~Gordon.
\newblock {\em Sequential Monte Carlo methods in practice}.
\newblock Springer Verlag, 2001.

\bibitem{ESSt:10}
A.~Es-Sarhir and W.~Stannat.
\newblock Improved moment estimates for invariant measures of semilinear
  diffusions in hilbert spaces and applications.
\newblock {\em J. Funct. Anal.}, 259(5):1248--1272, 2010.

\bibitem{evensen2009data}
G.~Evensen.
\newblock {\em Data {A}ssimilation: the {E}nsemble Kalman {F}ilter}.
\newblock Springer Verlag, 2009.

\bibitem{Fl:94}
F.~Flandoli.
\newblock Dissipativity and invariant measures for stochastic navier-stokes
  equations.
\newblock {\em NoDEA, Nonlinear Differ. Equ. Appl.}, 1(4):403--423, 1994.

\bibitem{FlGa:95}
F.~Flandoli and D.~Gatarek.
\newblock Martingale and stationary solutions for stochastic navier-stokes
  equations.
\newblock {\em Probab. Theory Relat. Fields}, 102(3):367--391, 1995.

\bibitem{FlMa:95}
F.~Flandoli and B.~Maslowski.
\newblock Ergodicity of the 2-d navier-stokes equation under random
  perturbations.
\newblock {\em Commun. Math. Phys.}, 172(1):119--141, 1995.

\bibitem{hairer2006ergodicity}
M.~Hairer and J.C. Mattingly.
\newblock Ergodicity of the {2D Navier-Stokes} equations with degenerate
  stochastic forcing.
\newblock {\em Annals of Mathematics}, 164:993--1032, 2006.

\bibitem{harlim2008filtering}
J.~Harlim and AJ~Majda.
\newblock Filtering nonlinear dynamical systems with linear stochastic models.
\newblock {\em Nonlinearity}, 21:1281, 2008.

\bibitem{harvey1991forecasting}
A.C. Harvey.
\newblock {\em {Forecasting, Structural Time Series Models and the Kalman
  filter}}.
\newblock Cambridge Univ Pr, 1991.

\bibitem{hayden2011discrete}
K.~Hayden, E.~Olson, and E.S. Titi.
\newblock Discrete data assimilation in the {L}orenz and 2d {N}avier-{S}tokes
  equations.
\newblock {\em Physica D: Nonlinear Phenomena}, 2011.

\bibitem{kalnay2003atmospheric}
E.~Kalnay.
\newblock {\em Atmospheric modeling, data assimilation, and predictability}.
\newblock Cambridge Univ Pr, 2003.

\bibitem{kloeden1994stochastic}
P.E. Kloeden, E.~Platen, and H.~Schurz.
\newblock Stochastic differential equations.
\newblock {\em Numerical Solution of SDE Through Computer Experiments}, pages
  63--90, 1994.

\bibitem{lorenc1986analysis}
A.C. Lorenc.
\newblock Analysis methods for numerical weather prediction.
\newblock {\em Quarterly Journal of the Royal Meteorological Society},
  112(474):1177--1194, 1986.

\bibitem{majda2010mathematical}
A.J. Majda, J.~Harlim, and B.~Gershgorin.
\newblock Mathematical strategies for filtering turbulent dynamical systems.
\newblock {\em Dynamical Systems}, 27(2):441--486, 2010.

\bibitem{mattingly2002exponential}
J.C. Mattingly.
\newblock Exponential convergence for the stochastically forced navier-stokes
  equations and other partially dissipative dynamics.
\newblock {\em Communications in mathematical physics}, 230(3):421--462, 2002.

\bibitem{olson2003determining}
E.~Olson and E.S. Titi.
\newblock Determining modes for continuous data assimilation in {2D
  turbulence}.
\newblock {\em Journal of statistical physics}, 113(5):799--840, 2003.

\bibitem{book:Robinson2001}
J.~C. Robinson.
\newblock {\em Infinite-{D}imensional {D}ynamical {S}ystems}.
\newblock Cambridge Texts in Applied Mathematics. Cambridge University Press,
  Cambridge, 2001.

\bibitem{Sch:87}
B.~Schmalfuss.
\newblock Bemerkungen zur zweidimensionalen stochastischen
  navier-stokes-gleichung.
\newblock {\em Math. Nachr.}, 131:19--32, 1987.

\bibitem{SBBA08}
T.~Snyder, T.~Bengtsson, P.~Bickel, and J.~Anderson.
\newblock Obstacles to high-dimensional particle filtering.
\newblock {\em Monthly Weather Review.}, 136:4629--4640, 2008.

\bibitem{Tarn-Rasis}
T.J. Tarn and Y.~Rasis.
\newblock Observers for nonlinear stochastic systems.
\newblock {\em Automatic Control, IEEE Transactions on}, 21(4):441--448, 1976.

\bibitem{temam1995navier}
R.~Temam.
\newblock {\em {Navier-Stokes Equations and Nonlinear Functional Analysis}}.
\newblock Number~66. Society for Industrial Mathematics, 1995.

\bibitem{book:Temam1997}
R.~Temam.
\newblock {\em {Infinite-Dimensional Dynamical Systems in Mechanics and
  Physics}}, volume~68 of {\em Applied Mathematical Sciences}.
\newblock Springer-Verlag, New York, second edition, 1997.

\bibitem{book:Temam2001}
R.~Temam.
\newblock {\em Navier-{S}tokes equations}.
\newblock AMS Chelsea Publishing, Providence, RI, 2001.

\bibitem{toth1997ensemble}
Z.~Toth and E.~Kalnay.
\newblock Ensemble forecasting at {NCEP} and the breeding method.
\newblock {\em Monthly Weather Review}, 125:3297, 1997.

\bibitem{trevisan2011chaos}
A.~Trevisan and L.~Palatella.
\newblock Chaos and weather forecasting: the role of the unstable subspace in
  predictability and state estimation problems.
\newblock {\em International Journal of Bifurcation and Chaos},
  21(12):3389--3415, 2011.

\bibitem{VL09}
P.J. Van~Leeuwen.
\newblock Particle filtering in geophysical systems.
\newblock {\em Monthly Weather Review}, 137:4089--4114, 2009.

\bibitem{van2010nonlinear}
PJ~van Leeuwen.
\newblock Nonlinear data assimilation in geosciences: an extremely efficient
  particle filter.
\newblock {\em Quarterly Journal of the Royal Meteorological Society},
  136(653):1991--1999, 2010.

\end{thebibliography}

\end{document}